\def\@seccntformat#1{\csname the#1\endcsname{.}\hskip .5em}
\renewcommand{\section}{\@startsection {section}{1}{\z@}%
                                {-4.5ex \@plus -1ex \@minus-.2ex}%
                                   {2.3ex \@plus.2ex}%
                               {\reset@font\Large\scshape\centering}}
	\renewenvironment{proof}[1][\proofname]{\par					   
	  \normalfont												   
	  \topsep6\p@\@plus6\p@	\trivlist							   
	  \item[\hskip\labelsep\bfseries							   
		#1\@addpunct{.}]\ignorespaces							   
	}{%
	  \qed\endtrivlist											   
	}		
\newtheorem{thm}{Theorem}[section]
\newtheorem{lm}[thm]{Lemma}
\theoremstyle{definition}
\theoremstyle{remark}
\numberwithin{equation}{section}
\newcommand{\f}{\varphi}
\def\multilimits@{\bgroup
  \Let@
  \restore@math@cr
  \default@tag
 \baselineskip\fontdimen10 \scriptfont\tw@
 \advance\baselineskip\fontdimen12 \scriptfont\tw@
 \lineskip\thr@@\fontdimen8 \scriptfont\thr@@
 \lineskiplimit\lineskip
 \vbox\bgroup\ialign\bgroup\hfil$\m@th\scriptstyle{##}$\hfil\crcr}  
\def\Sb{_\multilimits@}
\def\Sp{^\multilimits@}
\def\endSb{\crcr\egroup\egroup\egroup}
\newcommand{\supp}{\operatorname{supp}}
\newcommand{\al}{\alpha}
\newcommand{\cz}{Calder\'{o}n--Zygmund\ }
\newcommand{\e}{\varepsilon}
\newcommand{\dist}{\operatorname{dist}}
\newcommand{\pd}{\partial}
\newcommand{\Om}{\Omega}
\newcommand{\om}{\omega}
\newcommand{\bP}{\mathbb{P}}
\newcommand{\E}{\mathbb{E}}
\newcommand{\F}{\Phi}
\newcommand{\R}{\mathbb{R}^d}
\newcommand{\SSS}{\mathcal{S}}
\newcommand{\natwo}{[w]_{A_2}}
\newcommand{\naone}{[w]_{A_1}}
\newcommand{\nTtwo}{\|T\|_{L^2(w)\rightarrow L^2(w)}}
\newcommand{\nTtwoinf}{\|T\|_{L^2(w)\rightarrow L^{2,\infty}(w)}}
\newcommand{\nTtwoinfinv}{\|T'\|_{L^2(w^{-1})\rightarrow L^{2,\infty}(w^{-1})}}
\newcommand{\wI}{\langle w\rangle_I\langle w^{-1}\rangle_I}
\newcommand{\ww}{w^{-1}}
\newcommand{\eps}{\epsilon}
\newcommand{\ltwow}{L^2(wdx)}
\newcommand{\ltwowinv}{L^2(w^{-1}dx)}
\newcommand{\HHI}{\mathcal{H}_I^{\mu}}
\newcommand{\HHJ}{\mathcal{H}_J^{\nu}}
\newcommand{\HH}{\mathcal{H}}
\newcommand{\PP}{\mathcal{P}}
\newcommand{\PPI}{\PP_{I}}
\begin{document}

\title{On $A_2$ conjecture and corona decomposition of weights}
\author{ Carlos P\'erez\,, \thanks{Universidad de Sevilla}
Sergei Treil
\thanks{Dept. of Mathematics, Brown University}
 \,\,and Alexander Volberg
\thanks{Department of Mathematics, Michigan State University, East
Lansing, MI 48824, USA; 
}{ }\thanks{All  authors are  supported by the Spanish Research Council grant}
{ }\thanks{AMS Subject classification:
30E20, 47B37, 47B40, 30D55.} 
{ }\thanks{Key words: \cz operators, $A_2$ weights, $A_1$ weights, Carleson embedding theorem, Corona decomposition, stopping time,
   nonhomogeneous Harmonic Analysis, extrapolation, weak type .}}
\date{}
\maketitle

%



\begin{center}
         Por acahar
         \end{center}

\tableofcontents

 \pagestyle{headings}
 \renewcommand{\sectionmark}[1]%
        {\markboth{}{\hfill\thesection{.}\ #1\hfill}}

         \renewcommand{\subsectionmark}[1]{\relax}
\setcounter{section}{0}
\section{Introduction and historical background of the problem}
\label{Intr}

We consider here a  problem of finding the sharp estimate for the boundedness of an arbitrary Calder\'on-Zygmund operator in $L^2(w)$, $w\in A_2$. In the 70«s Hunt--Muckenhoupt--Wheeden found a wonderfully simple characterization of weights for which the Hilbert transform is bounded from  $L^2(w)$ to itself. The problem had prediction theory background, because for a scalar stationary stochastic process weight has a meaning of its spectral measure density, and the boundedness of the Hilbert transform has a meaning of positive angle between the Past and a Future of the process--a good property of the processes, sort of their regularity. 

As such the problem of the boundedness of the Hilbert transform has been already attacked by Helson--Szeg\"o and Helson--Sarason. They also obtained a chracterization for the boundedness of the Hilbert transform in a weighted $L^2$. The answers were equivalent (of course) but totally different. Till nowdays nobody knows how to  obtain directly Helson--Szeg\"o condition from Hunt--Muckenhoupt--Wheeden condition. What we are doing below has some very vague flavor of going in this direction. Notice that the Helson--Szeg\"o--Sarason approach was developed for $p\neq 2$ by Cotlar--Sadosky in \cite{CS1}--\cite{CS5}.
In the 80«s a new point of view was introduced by Sawyer \cite{Saw1}, \cite{Saw2}, his treatment was concentrated on positive operators (the Hilbert transform is not one of them), and he introduced the test conditions: to check the boundedness of a certain class of (positive) operators it turned out to be sufficient to check the uniform boundedness on a (non-linear) family of test functions, usually a collection of characteristic functions of some sort. Simultaneously in the 80«s David and Journe \cite{DJ1}, \cite{DJ2} built a theory of \cz operators (here the Hilbert transform belongs) based on so-called $T1$ theorem. A closer look shows (but to the best of our knowledge nobody then made this closer look) that $T1$ theorem is exactly Sawyer`s test conditions. The difference was that there was no weight (life is easier), this was Lebesgue measure theory, but the operators were not positive, but rather singular (life is harder). Later $T1$ theory was proved to be fine not only  for Lebesgue measure, but still the measure should have some smoothness: this was done by Christ in \cite{Ch}. At the end of the 90«s a nonhomogeneous measures were included into $T1$ theory: see \cite{NTV1}--\cite{NTV7}, \cite{VolLip}, \cite{To1}, \cite{To2}.

At the same time at the beginning of   90«s several important papers appeared, which showed {\it how bounded} is the \cz (or maximal) operator if Hunt--Muckenhoupt--Wheeden conditions are satisfied. It was the return to the 70«s but on a new turn of the spiral. The questions of {\it sharp} weighted estimates appeared and seemed to be interesting not only for their own sake but mostly because they were needed by a) multiparameter Harmonic Analysis, b) sharp and especially critical exponent estimates for certain elliptic PDE. The first {\it sharp} estimate was obtained by Buckley \cite{Buck1}, he proved that the  $L^{2}(w)\rightarrow L^{2}(w)$ norm of the Hardy--Littlewood maximal operator grows at most as the first degree of the so-called $A_{2}$ norm $[w]_{A_{2}}$ of the weight. The proof was not easy. Now there exists a proof due to Lerner which takes only several lines. For \cz operators (namely for the Hilbert transform and such) R. Fefferman and J. Pipher \cite{FP} got a linear estimate in terms of $A_{1}$ norm of weight (it is another important characteristic, see below).

Then people started to consider not \cz operators, not positive operators of maximal type or potential type, but their models on dyadic lattice. The simplest and most well used singular dyadic operator is called Martingale transform (see, e. g., \cite{Bo1}, \cite{Bo2}, \cite{Bu1}): for it, the sharp linear estimate in terms of $A_{2}$ norm $[w]_{A_{2}}$ of the weight was obtained by Wittwer in \cite{Wit}. The interesting feature of her proof was that she used as a template a two-weight Martingale transform estimate of \cite{NTV-2w}.
Both \cite{NTV-2w} and  \cite{Wit} are {\it Bellman function proofs}. These two things: the use of two-weight approach (notoriously difficult for \cz operators), and the use of the Bellman function technique became the features of sharp weighted line of research. 

Explanation may be the following: if one wants a sharp estimate, one is in a paradoxical situation: one should  not use the good properties of weights, but one must use them! The exit is like that: use the good property but only once. The rest of the proof should be working for very bad measures (weights). This is how nonhomogeneous Harmonic Analysis and two-weights estimates come into play probably. We will see this below.

The Bellman proofs persisted, and in \cite{PetmV} the first for-real \cz operator got a sharp estimate by the first power of the norm $[w]_{A_{2}}$. This was the Ahlfors--Beurling operator, and its sharp weighted estimate allowed the authors to solve a problem of Iwaniec on a borderline regularity of Beltrami PDE.

The Hilbert transform turned out to be more difficult to treat, but in \cite{Petm1} Petermichl proved the linear estimate in terms of norm $[w]_{A_{2}}$ for the Hilbert transform as well. Then in \cite{Petm2} she did this for the Riesz transforms. The Ahlfors--Beurling operator is the averaging of Martingale transforms (see \cite{DV}), and this being established, the result of \cite{PetmV} became the corollary of \cite{Wit}. On the other hand, as Petermichl showed in \cite{Petm3}, the Hilbert transform is the averaging of the next-in-complexity dyadic operators: dyadic shifts. There are many more and more complex dyadic shifts, the linear estimate for all of them in terms of $[w]_{A_{2}}$ was shown recently in a very interesting paper of Lacey--Petermichl--Reguera \cite{LPR}. And then another proof appeared in  almost impossibly simple and beautiful papers of Cruz-Uribe, Martell, and the first author \cite{CUMP1}, \cite{CUMP2} and \cite{CPP}. They used an extremely beautiful ``formula`` by Lerner \cite{Ler1}.

So now we have a linear in terms of $[w]_{A_{2}}$ estimate for all dyadic shifts and all their averages, which is a subclass of quite smooth \cz operators. However, the general \cz operator is not a simple average of dyadic shifts. Moreover, dyadic shifts have ``depth`` $\tau$, which is the measure of their complexity. It is easy to get an estimate of their norms exponential in $\tau$. But this is bad if we want to give a linear in $[w]_{A_{2}}$ estimate for all \cz operators.

So we naturally come to the question to obtain a linear in $[w]_{A_{2}}$ estimate for all  \cz operators.  We almost get it. The reader can see this below.

\section{Main results}
\label{results}

In what follows $w$ is a weight in $A_2$, which as we know means
\begin{equation}
\label{a2}
\natwo:=\sup_I\wI <\infty\,,
\end{equation}
the quantity $\natwo$ will be called the ``norm" of the weight.  Operator $T$ will be always a {\bf bounded} operator in $L^2(\R)$ with Lebesgue measure such that
\begin{equation}
\label{CZ1}
(Tf,g)=\int K(x,y) f(y)g(x)dy dx
\end{equation}
for all nice $f,g$ having {\bf disjoint} supports. Here $K(x,y)$ denotes the kernel of the operator and it will be always Calder\'on--Zygmund (CZ) kernel. That means
$$
|K(x,y)|\le \frac{1}{|x-y|^d}\,,\,
$$
\begin{equation}
\label{CZ2}
|K(x,y)-K(x',y)| +|K(y,x)-K(y,x')|\le \frac{|x-x'|^{\epsilon}}{|x-y|^{d+\epsilon}}\,, |x-x'|\le \frac12 |x-y|\,.
\end{equation}

\noindent Notice that $K$ does not define $T$ uniquely, the identity operator and all operators of multiplication on a bounded function have the same kernel $K=0$. Anyhow, such operators are called Calder\'on--Zygmund operators (boundedness in $L^2$ with respect to Lebesgue measure and abovementioned properties of the kernel). So in our definition the identity is also a \cz operator, but of course a non-interesting one. By $T'$ we understand the corresponding transposed operator, its kernel is $K(y,x)$.

In what follows $C,c$ with indices denote absolute constants and constants depending on $d$ and $\epsilon$ only.

We are going to prove two main results.

\begin{thm}
\label{strongweak}
$\nTtwo \le c_1 \natwo +c_2( \nTtwoinf+\nTtwoinfinv)$\,.
\end{thm}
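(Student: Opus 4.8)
The plan is to reduce the strong-type bound to the weak-type bounds via a Calderón--Zygmund decomposition combined with a ``corona'' (stopping time) decomposition of the weight $w$, which is the device that lets us use the $A_2$ property \emph{only once}. First I would fix $f\in L^2(w)$ and, by duality, test $\nTtwo$ against a pair $f\in L^2(w)$, $g\in L^2(w)$. The idea is to split $f$ (and symmetrically $g$) at a family of stopping cubes $\{Q_j\}$ generated by the weight: starting from a large cube, select the maximal subcubes $Q_j$ where the average $\langle w\rangle_{Q_j}$ first doubles (or, more precisely, where $\wI$-type control is about to be lost). On each ``corona" region $Q\setminus\bigcup Q_j$ the weight is essentially constant (comparable to $\langle w\rangle_Q$), so there the operator behaves as if against Lebesgue measure and the weak-type input plus a good-$\lambda$ / interpolation argument gives an $L^2$ bound with a \emph{bounded} constant. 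The contributions localized to the stopping cubes are summed using a Carleson embedding theorem: the stopping cubes satisfy a packing condition $\sum_{Q_j\subset R}\langle w\rangle_{Q_j}|Q_j|\lesssim \langle w\rangle_R|R|$ precisely because of the $A_2$ (really $A_\infty$) property of $w$, and this is the one place where $\natwo$ enters, producing the term $c_1\natwo$.

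More concretely, the key steps in order are: (1) set up the stopping time for $w$ and record the Carleson packing estimate for the stopping cubes, with Carleson constant controlled by $\natwo$ (via the reverse Hölder / $A_\infty$ self-improvement of $A_2$ weights); (2) decompose $f=\sum_j \Delta_j f + (\text{corona parts})$, grouping the martingale differences of $f$ with respect to the stopping skeleton; (3) on each corona, replace $w$ by the constant $\langle w\rangle_Q$ and invoke the weak-type hypotheses $\nTtwoinf$ and $\nTtwoinfinv$ together with a Marcinkiewicz-type / extrapolation argument (the weak $(2,2)$ bound for $T$ on \emph{both} $w$ and $w^{-1}$ upgrades, by interpolation with the trivial endpoint coming from near-constancy of the weight, to a strong bound on the corona with absolute constant); (4) handle the ``transit" terms where $f$ lives on a corona but $Tf$ is tested on a deeper corona, using the CZ smoothness \eqref{CZ2} of $K$ to gain a Poisson-kernel-type decay factor that is summable against the Carleson measure from step (1); (5) sum everything: the diagonal corona terms give $c_2(\nTtwoinf+\nTtwoinfinv)$ and the off-diagonal/packing terms give $c_1\natwo$.

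The main obstacle I expect is step (4) together with the bookkeeping in step (2): controlling the interaction between a function supported high up in the stopping tree and its image tested far down, i.e. the genuinely non-local part of the singular integral. Unlike the martingale-transform or dyadic-shift models, a general CZ operator does not commute with the stopping-time projections, so one must carefully exploit \eqref{CZ2} to show that $|\langle T(\chi_{R\setminus\bigcup Q_j}f), \chi_{Q_k}g\rangle|$ carries a factor like $(\ell(Q_k)/\ell(R))^{\epsilon}$ or a Poisson tail $\PP_{Q_k}(\chi_{R\setminus Q_k}d\mu)$, and then sum these against the packing measure — essentially a nonhomogeneous Carleson embedding. A secondary subtlety is that the weak-type hypothesis is assumed for $T$ and $T'$ but \emph{not} for the localized/truncated pieces, so one needs the a priori boundedness of $T$ on $L^2(\R)$ to justify the truncations and localizations before the weak bounds are applied; keeping all constants absolute through this step is where most of the care goes.
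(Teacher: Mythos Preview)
Your proposal has a genuine gap in step (3), and the overall route differs from the paper's in a way that matters.

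\textbf{The gap.} You write that on each corona ``the weight is essentially constant,'' so you ``replace $w$ by the constant $\langle w\rangle_Q$ and invoke the weak-type hypotheses $\nTtwoinf$ and $\nTtwoinfinv$ together with a Marcinkiewicz-type / extrapolation argument'' to get a strong bound with absolute constant. This step does not work as stated. The weak-type hypothesis is a global statement about $T$ acting on $L^2(w)$; it does not localize to a corona, because $T$ is not localized: $T(\chi_Q f)$ lives everywhere, and on the corona the weak bound you have is still against the \emph{weighted} distribution function, not against Lebesgue measure. Replacing $w$ by a constant gets you back to unweighted $L^2$, where you already have the strong bound of $T$ --- but that is independent of $\nTtwoinf$ and contributes nothing toward the weighted estimate. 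Nor is there an interpolation that takes weak $(2,2)$ with respect to $w$ and weak $(2,2)$ with respect to $w^{-1}$ to strong $(2,2)$ with respect to $w$; both endpoints are at the same exponent. So the mechanism by which the weak constants are supposed to enter the final inequality is missing.

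\textbf{How the paper actually uses the weak bounds.} The paper never feeds $\nTtwoinf$ into a local interpolation. Instead it makes a two-line reduction: the \emph{testing constant} $K_\chi$ (the best constant in $\|T(\chi_I w^{-1})\|_{L^2(w)}^2\le K_\chi\, w^{-1}(I)$ and its dual) is bounded by the weak norms via Lorentz duality, since $\|\chi_I\|_{L^{2,1}(w^{-1})}=w^{-1}(I)^{1/2}$ and the adjoint of $L^{2,1}\to L^2$ is $L^2\to L^{2,\infty}$. This is Theorem~\ref{Kchithe}; it is the only place the weak hypothesis is used, and it is elementary. Everything else is the hard part: a two-weight $T1$ theorem (Theorem~\ref{pivotal3}) showing $\nTtwo\lesssim \sqrt{[\mu,\nu]_{A_2}}+\sqrt{K}+\sqrt{K_\chi}$, where $K$ is a \emph{pivotal} constant. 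The stopping time is not based on doubling of $\langle w\rangle$ as you propose, but on the Poisson-type pivotal quantity $[\PP_I(\chi_{\hat I\setminus I}d\mu)]^2\nu(I)$ exceeding $\tilde K\,\mu(I)$; the $A_2$ hypothesis enters exactly once, through Buckley's theorem, to show $K\le c\,\natwo^2$ (Lemma~\ref{pivotal1}). Your Poisson tails from step~(4) are indeed the right objects, but in the paper they drive the stopping rule itself, not only the off-diagonal tails, and the diagonal/``difficult'' terms are handled by paraproducts controlled by $K_\chi$ and the Carleson property of the stopping tree --- not by any weak-to-strong upgrade on coronas.
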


\noindent{\bf Remarks.} 1. Obviously $\nTtwoinf\le \nTtwo$. For all ``interesting" \cz operators also $\natwo\le c\nTtwo$. So the theorem gives a ``formula" for the norm. 

\noindent 2. It is a bit amazing what it ``almost" says: if \cz operator is of weak type $(2,2)$  then it is of the strong type $(2,2)$. Moreover, its weak type norm coincides (up to a constant) with a strong type norm! The last outrageous remark is basically true for all ``interesting" \cz operators, if we agree to call interesting those for which $\natwo\le \nTtwoinf$!

\bigskip

Our next result gives a pretty good (almost perfect) estimate of $\nTtwoinf$ (and thus of $\nTtwo$ by Theorem \ref{strongweak}). Here $T$ is {\bf any} operator. It is an abstract theorem. Along with $A_2$ class we need $A_1$. The weight is said to belong to $A_1$ if for any cube $I$

\begin{equation}
\label{a1}
\langle w\rangle_I \le B\,\inf_{x\in I} w(x)\,.
\end{equation}

The smallest $B$ serving for all cubes $I$ is called its $A_1$ ``norm": $\naone$.

\begin{thm}
\label{weakpv}
Let  $\phi$ be any function on $[1,\infty)$, $\phi(t)\ge t$. Let the operator $T$ has the property that for any $w\in A_1$
\begin{equation}
\label{weakaone}
\|T\|_{L^1(w)\rightarrow L^{1,\infty}(w)} \le c_1 \phi(\naone)\,.
\end{equation}
Then this operator satisfies
\begin{equation}
\label{weakatwo}
\|T\|_{L^2(w)\rightarrow L^{2,\infty}(w)} \le c_1 \phi(c_2\natwo)\,.
\end{equation}
\end{thm}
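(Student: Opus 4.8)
The plan is to prove Theorem~\ref{weakpv} by a \emph{sharp Rubio de Francia extrapolation}: one runs the Rubio de Francia iteration in the dual weight $\sigma:=w^{-1}$, and the only quantitative ingredient is Buckley's linear-in-$A_2$ bound for the maximal function. Fix $w\in A_2$ and recall $[\sigma]_{A_2}=\natwo$. It suffices to prove \eqref{weakatwo} for $f$ bounded with compact support (hence in $L^2(w)$ and in $L^1(W)$ for every locally integrable weight $W$), the general case following by a standard limiting argument; we may also assume $\|f\|_{L^2(w)}<\infty$ and, replacing $\phi$ by $t\mapsto\sup_{1\le s\le t}\phi(s)$ if needed, that $\phi$ is non-decreasing. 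Fix $\lambda>0$ and set $\Om_\lambda:=\{|Tf|>\lambda\}$; the goal is $\lambda\,w(\Om_\lambda)^{1/2}\lesssim\phi(c_2\natwo)\,\|f\|_{L^2(w)}$. First I would dualize using that $L^2(w)$ is a Hilbert space under $\langle u,v\rangle=\int uv\,w\,dx$ and that $\chi_{\Om_\lambda}\ge0$:
\[
w(\Om_\lambda)^{1/2}=\|\chi_{\Om_\lambda}\|_{L^2(w)}=\sup\Big\{\int_{\Om_\lambda}g\,w\,dx:\ g\ge0,\ \|g\|_{L^2(w)}\le1\Big\},
\]
and fix a competitor $g$.

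Next I would manufacture an $A_1$ majorant of $g\,w$, \emph{but carried out in the space $L^2(\sigma)$} --- this choice is the crux. Let $M$ be the Hardy--Littlewood maximal operator and $\mathcal N:=\|M\|_{L^2(\sigma)\to L^2(\sigma)}$, which is finite and, by Buckley's theorem \cite{Buck1}, satisfies $\mathcal N\le c\,[\sigma]_{A_2}=c\,\natwo$. Put
\[
W:=\sum_{k=0}^{\infty}\frac{M^kh}{(2\mathcal N)^k},\qquad h:=g\,w .
\]
Since $\|h\|_{L^2(\sigma)}=\|g\,w\|_{L^2(\sigma)}=\|g\|_{L^2(w)}\le1$ and $\|M^kh\|_{L^2(\sigma)}\le\mathcal N^k\|h\|_{L^2(\sigma)}$, the series converges in $L^2(\sigma)$ and a routine verification gives the three Rubio de Francia properties: \emph{(i)} $W\ge h=g\,w$; \emph{(ii)} $\|W\|_{L^2(\sigma)}\le2$; \emph{(iii)} $MW\le 2\mathcal N\,W$, so that $W\in A_1$ with $[W]_{A_1}\le2\mathcal N\le c_2\,\natwo$.

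Then the endpoint hypothesis finishes the job. By (i) and \eqref{weakaone},
\[
\lambda\int_{\Om_\lambda}g\,w\,dx\le\lambda\,W(\Om_\lambda)=\lambda\,W\big(\{|Tf|>\lambda\}\big)\le\|Tf\|_{L^{1,\infty}(W)}\le c_1\,\phi\big([W]_{A_1}\big)\,\|f\|_{L^1(W)},
\]
and by (iii) together with the monotonicity of $\phi$ one has $\phi([W]_{A_1})\le\phi(c_2\natwo)$; by Cauchy--Schwarz and (ii),
\[
\|f\|_{L^1(W)}=\int|f|\,W\,dx\le\|f\|_{L^2(w)}\Big(\int W^2w^{-1}\,dx\Big)^{1/2}=\|f\|_{L^2(w)}\,\|W\|_{L^2(\sigma)}\le2\,\|f\|_{L^2(w)} .
\]
Combining these and taking the supremum over $g$ gives $\lambda\,w(\Om_\lambda)^{1/2}\le2c_1\,\phi(c_2\natwo)\,\|f\|_{L^2(w)}$; the supremum over $\lambda$ then yields \eqref{weakatwo} after renaming $c_1$.

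The one load-bearing decision is running the Rubio de Francia algorithm in $L^2(\sigma)$ rather than in $L^2(w)$: it is forced by the Cauchy--Schwarz step, which needs $\int W^2/w\lesssim1$, i.e.\ smallness of $W$ in $L^2(\sigma)$. The only quantitative input is Buckley's linear bound $\|M\|_{L^2(v)\to L^2(v)}\lesssim[v]_{A_2}$ applied to $v=\sigma$, together with $[\sigma]_{A_2}=\natwo$; these are exactly what turn the soft bound $[W]_{A_1}\le2\mathcal N$ into a linear-in-$\natwo$ control of the argument of $\phi$, which is what the theorem asks for. I expect no genuine difficulty beyond this --- notably no smoothness, kernel bound, or even linearity of $T$ is used past \eqref{weakaone}, which is why the statement is ``abstract''. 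The only bookkeeping requiring care is the a priori reduction to an $f$ lying simultaneously in $L^2(w)$ and in each $L^1(W)$, so that the symbol ``$Tf$'' denotes the same function in both scales.
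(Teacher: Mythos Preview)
Your proof is correct and is essentially the same argument as the paper's. The only cosmetic difference is presentation: the paper runs the Rubio de Francia algorithm with the weighted maximal operator $S_w f:=M(fw)/w$ acting on $L^2(w)$, while you run it with the plain maximal operator $M$ acting on $L^2(\sigma)=L^2(w^{-1})$; since $f\mapsto fw$ is an isometry from $L^2(w)$ onto $L^2(\sigma)$ intertwining $S_w$ with $M$, the two iterations produce the very same majorant $W$ and the same norm $\|S_w\|_{L^2(w)\to L^2(w)}=\|M\|_{L^2(\sigma)\to L^2(\sigma)}$, controlled by Buckley's linear bound via $[\sigma]_{A_2}=\natwo$.
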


Let us combine this theorem with a remarkable result of Lerner--Ombrosio--P\'erez:

\begin{thm}
\label{lop1}
Let  $T$ be an arbitrary \cz operator. Then
\begin{equation}
\label{weakloga1}
\|T\|_{L^1(w)\rightarrow L^{1,\infty}(w)} \le c_1 \phi(\naone)\,,\,\,\text{where}\,\, \phi(t) =t\log (1+t)\,.
\end{equation}
\end{thm}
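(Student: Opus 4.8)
The plan is to derive the weak-type inequality from a strong $L^p(w)$ bound in which the dependence on $p$ is well controlled as $p\to1$, and then to recover the endpoint by a Calder\'on--Zygmund decomposition built on the measure $w\,dx$ itself; the logarithm in $\phi$ will come out of an optimal choice of $p$. By homogeneity we may assume $\|f\|_{L^1(w)}=1$, and it suffices to prove $w(\{|Tf|>\lambda\})\le c\,\naone\log(e+\naone)/\lambda$ for all $\lambda>0$ (this is \eqref{weakloga1} up to the harmless comparability of $\log(1+t)$ and $\log(e+t)$ for $t\ge1$). The $L^p$ ingredient is that for $w\in A_1$ and $1<p<\infty$ one has $\|Tf\|_{L^p(w)}\le c\,p\,p'\,\naone\,\|f\|_{L^p(w)}$ with $c=c(d,\e)$; I would obtain this through the sharp maximal function circle of ideas --- the pointwise bound $M^{\#}_{\delta}(Tf)\le c\,Mf$ valid for any \cz operator and any $0<\delta<1$, a Fefferman--Stein inequality $\|M_{\delta}g\|_{L^p(w)}\le c\,\|M^{\#}_{\delta}g\|_{L^p(w)}$ with its dependence on the $A_\infty$ constant of $w$ (which is $\le c\,\naone$) made explicit, and the $A_1$ bound $\|Mf\|_{L^p(w)}\le c\,p'\,\naone^{1/p}\,\|f\|_{L^p(w)}$, the latter a consequence of $\int(Mf)^{p}v\le c(p')^{p}\int|f|^{p}Mv$ together with $Mw\le\naone w$. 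Keeping the combined dependence on $\naone$ down to the first power here is a point that needs care.

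For the main step, note that $w\in A_1\subset A_\infty$, so $d\mu:=w\,dx$ is doubling, $\mu(2Q)\le2^{d}\naone\,\mu(Q)$; run the dyadic Calder\'on--Zygmund decomposition of $f$ at height $\lambda$ relative to $\mu$. This produces disjoint dyadic cubes $\{Q_j\}$ with $\lambda<\langle f\rangle_{Q_j,\mu}\le c\,\naone^{2}\lambda$ and $\sum_j\mu(Q_j)\le\lambda^{-1}$, together with $f=g+b$, $b=\sum_j b_j$, $\supp b_j\subset Q_j$, $\int b_j\,d\mu=0$, $\|g\|_{L^\infty}\le c\,\naone^{2}\lambda$, and --- the reason for decomposing against $\mu$ rather than Lebesgue measure --- $\|g\|_{L^1(\mu)}\le\|f\|_{L^1(\mu)}=1$. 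Chebyshev, the $L^p$ ingredient, and $\|g\|_{L^p(w)}^{p}\le\|g\|_{L^\infty}^{p-1}\|g\|_{L^1(\mu)}$ give
\begin{equation*}
w(\{|Tg|>\lambda/2\})\le\Bigl(\tfrac{2}{\lambda}\Bigr)^{p}\|Tg\|_{L^p(w)}^{p}\le c\,\frac{(p\,p'\,\naone)^{p}\,\naone^{2(p-1)}}{\lambda}.
\end{equation*}
Choosing $p=1+\bigl(\log(e+\naone)\bigr)^{-1}$ makes $p'\le c\log(e+\naone)$ and $\naone^{2(p-1)}\le c$, and a direct estimate of $(p\,p'\,\naone)^{p}$ with this $p$ gives $(p\,p'\,\naone)^{p}\le c\,\naone\log(e+\naone)$; hence $w(\{|Tg|>\lambda/2\})\le c\,\naone\log(e+\naone)/\lambda$.

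For the bad part, let $\widetilde\Omega:=\bigcup_j c_{d}Q_j$ with a suitable dimensional dilation $c_d$. Then $w(\widetilde\Omega)\le\sum_j w(c_dQ_j)\le c\,\naone\sum_j\mu(Q_j)\le c\,\naone/\lambda$, where one power of $\naone$ enters through the doubling of $w$. On $\R\setminus\widetilde\Omega$ one applies Chebyshev and Fubini, uses the cancellation $\int b_j\,d\mu=0$, and invokes a weighted H\"ormander-type inequality
\begin{equation*}
\int_{\R\setminus c_dQ_j}\bigl|K(x,y)-K(x,c_{Q_j})\bigr|\,w(x)\,dx\le c\,\naone\,\inf_{Q_j}w\qquad(y\in Q_j),
\end{equation*}
which is true for $A_1$ weights precisely because the extra decay $|x-y|^{-\e}$ in \eqref{CZ2} defeats the at most polynomial growth $w\bigl(B(c_{Q_j},2^{k}\ell(Q_j))\bigr)\le c\,2^{kd}\naone\,\ell(Q_j)^{d}\inf_{Q_j}w$ of the weight of the dilated balls. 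Combined with $\sum_j\|b_j\|_{L^1(\mu)}\le2\|f\|_{L^1(\mu)}=2$ this controls the far part by $c\,\naone/\lambda$ as well, and adding the estimates of the good and bad parts finishes the proof.

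The hard part is the bad part. The cancellation $\int b_j\,d\mu=0$ is cancellation against $w\,dx$, whereas the kernel regularity \eqref{CZ2} is adapted to cancellation against $dx$; and since $w$ need not be smooth, one cannot simply transfer one to the other. The remedy is to split $b_j$ into its $dx$-mean-zero component plus the correction $(\langle f\rangle_{Q_j}-\langle f\rangle_{Q_j,\mu})\chi_{Q_j}$ and to estimate the $T$-image of the correction by hand, exploiting that $A_1$ bounds the oscillation $|w-\langle w\rangle_{Q_j}|\le c\,\naone\,w$ on $Q_j$; the entire difficulty is to arrange this so that only \emph{one} extra power of $\naone$ is produced --- matching $w(\widetilde\Omega)$ --- rather than two, which a crude execution would give and which would yield only the bound $\naone^{2}$. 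Together with the linear-in-$\naone$ bookkeeping in the $L^p$ ingredient, this is the substance of the proof; the passage from the $p'\sim\log(e+\naone)$ blow-up to the logarithmic factor in $\phi$ is then the routine optimization carried out above.
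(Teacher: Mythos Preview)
First, note that the paper does not prove Theorem~\ref{lop1}; it is quoted as a ``remarkable result of Lerner--Ombrosi--P\'erez'' and then combined with Theorems~\ref{strongweak} and~\ref{weakpv} to yield Theorems~\ref{pv1} and~\ref{pv2}. So there is no in-paper proof to compare against; what follows is an assessment of your argument on its own terms.

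Your overall architecture --- a Calder\'on--Zygmund decomposition, a strong $L^p(w)$ bound for the good part with $p=1+(\log(e+\naone))^{-1}$, and a weighted H\"ormander estimate for the bad part --- is indeed the shape of the Lerner--Ombrosi--P\'erez argument, and several of your ingredients are correct: the weighted H\"ormander inequality you state holds with exactly the constant $c\,\naone\inf_{Q_j}w$, and the optimization in $p$ is carried out correctly. The genuine gap is exactly where you locate it yourself, in the bad part, and your proposed repair does not close it. When you split each $b_j$ into its $dx$-mean-zero part plus the correction $(\langle f\rangle_{Q_j}-\langle f\rangle_{Q_j,\mu})\chi_{Q_j}$, you have effectively converted your $\mu$-CZ decomposition into the Lebesgue-CZ decomposition on the \emph{same} family $\{Q_j\}$: the new good function is $g'=f\chi_{(\cup Q_j)^c}+\sum_j\langle f\rangle_{Q_j}\chi_{Q_j}$. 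One then has $\sum_j|\langle f\rangle_{Q_j}|\,w(Q_j)\le\sum_j\langle w\rangle_{Q_j}\!\int_{Q_j}|f|\le\naone\sum_j\int_{Q_j}|f|w$, so $\|g'\|_{L^1(w)}\le c\,\naone$ rather than $\le c$; feeding this back into your Chebyshev/$L^p$ step yields $c\,\naone^{2}\log(e+\naone)/\lambda$, precisely the ``crude'' outcome you warned against. The oscillation bound $|w-\langle w\rangle_{Q_j}|\le c\,\naone\,w$ you invoke is only the triangle inequality together with $\langle w\rangle_{Q_j}\le\naone w$; it carries no smallness (for $w(x)=|x|^{-\alpha}$, $0<\alpha<d$, which is in $A_1$, the ratio $w/\langle w\rangle_Q$ is unbounded on cubes through the origin), so it cannot make the correction term negligible.

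In short, decomposing against $w\,dx$ buys you $\|g\|_{L^1(w)}\le 1$ but costs you the cancellation needed for $b$, and your attempt to restore that cancellation simply transfers the missing factor of $\naone$ back to the good part. The published proof runs the Calder\'on--Zygmund decomposition against Lebesgue measure from the outset, so that the bad part is clean and contributes $c\,\naone/\lambda$ via your weighted H\"ormander estimate; but then for the good part one cannot simply use the black-box bound $\|T\|_{L^p(w)\to L^p(w)}\le c\,pp'\,\naone$ followed by $\|g\|_{L^p(w)}^p\le\|g\|_\infty^{p-1}\|g\|_{L^1(w)}$ --- that route, as you can check, again produces $\naone^{2}\log\naone$. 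A finer two-weight inequality for $T$ is needed so that only a single power of $\naone$ enters the good-part estimate; this is where the substantive work of the Lerner--Ombrosi--P\'erez paper lies, and it is not recoverable from the pieces you have assembled.
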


We obtain for any \cz operator

\begin{thm}
\label{pv1}
Let  $T$ be an arbitrary \cz operator. Then
\begin{equation}
\label{weakloga2}
\|T\|_{L^2(w)\rightarrow L^{2,\infty}(w)} \le c_1 \natwo\log (1+\natwo)\,.
\end{equation}
\end{thm}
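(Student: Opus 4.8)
The plan is to prove Theorem~\ref{pv1} by composing Theorem~\ref{weakpv} with Theorem~\ref{lop1}: the weak-type $A_1$ bound for a \cz operator is promoted by the abstract extrapolation of Theorem~\ref{weakpv} to a weak-type $A_2$ bound, after which one only has to tidy up constants. No genuinely new analysis is needed at this stage; the whole substance sits upstream, in Theorem~\ref{weakpv} (the weak-endpoint $A_1\Rightarrow A_2$ extrapolation, which is the core of the paper) and in the cited Lerner--Ombrosi--P\'erez estimate.

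First I would check that a \cz operator meets the hypothesis~\eqref{weakaone} of Theorem~\ref{weakpv}. Theorem~\ref{lop1} gives $\|T\|_{L^1(w)\to L^{1,\infty}(w)}\le c_1\,\naone\log(1+\naone)$ for every $w\in A_1$. To land exactly inside the template of Theorem~\ref{weakpv}, which asks for a function $\phi$ on $[1,\infty)$ with $\phi(t)\ge t$, I would take $\phi(t):=t\bigl(1+\log(1+t)\bigr)$ (equivalently, up to a constant, $t\log(e+t)$). This $\phi$ is increasing, satisfies $\phi(t)\ge t$ for all $t\ge0$, and dominates $t\log(1+t)$, so~\eqref{weakaone} holds with this $\phi$. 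Applying Theorem~\ref{weakpv} then yields, for every $w\in A_2$,
\[
\|T\|_{L^2(w)\to L^{2,\infty}(w)}\ \le\ c_1\,\phi(c_2\natwo)\ =\ c_1 c_2\,\natwo\bigl(1+\log(1+c_2\natwo)\bigr).
\]

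It remains to absorb the constant $c_2$ inside the logarithm. Here I would use the elementary fact that $\langle w\rangle_I\langle w^{-1}\rangle_I\ge1$ for every cube $I$ (Cauchy--Schwarz), so that $\natwo\ge1$ and hence $\log(1+\natwo)\ge\log2$. Then $1+\log(1+c_2\natwo)\le 1+\log(1+c_2)+\log(1+\natwo)\le C\log(1+\natwo)$, and renaming $c_1c_2C$ as $c_1$ gives precisely~\eqref{weakloga2}.

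I do not expect any real obstacle, since everything hard has been delegated to Theorems~\ref{weakpv} and~\ref{lop1}. Let me note, though, that~\eqref{weakloga2} is more than a curiosity: applied to $T$ it controls $\nTtwoinf$, and applied to the \cz operator $T'$ with the weight $w^{-1}$ (for which $\natwoinv=\natwo$) it controls $\|T'\|_{L^2(w^{-1})\to L^{2,\infty}(w^{-1})}$; feeding both into Theorem~\ref{strongweak} produces the strong-type bound $\nTtwo\le c\,\natwo\log(1+\natwo)$ --- the ``almost $A_2$'' estimate announced in the introduction.
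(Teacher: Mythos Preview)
Your proposal is correct and follows exactly the route the paper takes: Theorem~\ref{pv1} is obtained by plugging the Lerner--Ombrosi--P\'erez bound (Theorem~\ref{lop1}) into the abstract extrapolation Theorem~\ref{weakpv}. You are in fact slightly more careful than the paper, which simply writes ``combine'' without addressing the technical hypothesis $\phi(t)\ge t$; your adjustment $\phi(t)=t(1+\log(1+t))$ and the constant-absorption via $\natwo\ge1$ are the right way to make that step rigorous.
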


Combining this with our first Theorem \ref{strongweak}
we get

\begin{thm}
\label{pv2}
Let  $T$ be an arbitrary \cz operator. Then
\begin{equation}
\label{strongloga2}
\|T\|_{L^2(w)\rightarrow L^{2}(w)} \le c_1 \natwo\log (1+\natwo)\,.
\end{equation}
\end{thm}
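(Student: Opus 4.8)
The plan is to combine the two pillars already assembled in the excerpt. Theorem \ref{pv1} gives the weak-type $(2,2)$ bound $\nTtwoinf \le c_1 \natwo \log(1+\natwo)$ for an arbitrary \cz operator $T$; this is the main quantitative input and its proof (via Theorems \ref{weakpv} and \ref{lop1}, i.e.\ extrapolating the Lerner--Ombrosio--P\'erez $A_1$ estimate) is taken as given. The key observation is that Theorem \ref{pv1} applies not only to $T$ but equally to its transpose $T'$: the transpose of a \cz operator is again a \cz operator with a kernel $K(y,x)$ satisfying the same bounds \eqref{CZ1}--\eqref{CZ2}, and $T'$ is bounded on $L^2(\R)$ whenever $T$ is. Hence we simultaneously have $\nTtwoinfinv \le c_1 \natwo\log(1+\natwo)$ with the \emph{same} constant.

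Next I would invoke Theorem \ref{strongweak}, which asserts
$$
\nTtwo \le c_1\natwo + c_2\bigl(\nTtwoinf + \nTtwoinfinv\bigr).
$$
Substituting the two weak-type bounds just obtained, and noting that $\natwo \ge 1$ so that $\natwo \le \natwo\log(1+\natwo)\cdot(\log 2)^{-1}$ absorbs the lone $c_1\natwo$ term into the same expression, we get
$$
\nTtwo \le c_1\natwo + 2c_2 c_1 \natwo\log(1+\natwo) \le C\,\natwo\log(1+\natwo),
$$
which is exactly \eqref{strongloga2}. Relabeling the absolute constant as $c_1$ completes the argument.

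There is no genuine obstacle at this final assembly stage — it is a two-line deduction once Theorems \ref{strongweak} and \ref{pv1} are in hand. The only point demanding a word of care is the symmetry remark: one must check that the transpose $T'$ inherits both the $L^2$-boundedness and the \cz kernel estimates so that Theorem \ref{pv1} is legitimately applicable to it; this is immediate from the definition of \cz operator given after \eqref{CZ2} and from the fact that transposition swaps the roles of the two difference quotients in \eqref{CZ2}. All the real difficulty of the paper is hidden inside Theorem \ref{strongweak} (the corona decomposition of the weight, stopping-time and Bellman-type arguments) and inside the extrapolation Theorem \ref{weakpv}; the present theorem is simply their corollary.
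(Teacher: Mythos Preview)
Your proposal is correct and matches the paper's own argument, which is literally the single sentence ``Combining this with our first Theorem \ref{strongweak} we get'' following Theorem \ref{pv1}. Your explicit remark that Theorem \ref{pv1} must also be applied to $T'$ (with weight $w^{-1}$, using $[w^{-1}]_{A_2}=\natwo$) to control $\nTtwoinfinv$ fills in the one step the paper leaves implicit.
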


\noindent{\bf Remarks.} 1. This almost linear estimate should be replaced by a linear one. So far the obstacle is in Theorem \ref{lop1}.

\bigskip

The plan of the paper: we first prove Theorem \ref{strongweak}. It will take several sections. Then we prove Theorem \ref{weakpv}.
To prove Theorem \ref{strongweak} we need two more theorems. Let us introduce some notations: $T_{w^{-1}}$ denotes the operator $Tw^{-1}$ considered on $L^2(w^{-1}dx)$, $T_{w}$ denotes the operator $Tw$ considered on $L^2(wdx)$.  Notice a simple isometric formula
\begin{equation}
\label{isom}
\|T_{\ww}:L^2(\ww)\rightarrow L^2(w)\|=\nTtwo\,.
\end{equation}
For getting rid of inessential question we always think that our weight satisfies
$$
\lambda \le w \le L\,,
$$
for a very small positive $\lambda$ and a large $L$. But all estimates of norms we want to have {\it independent} of $\lambda$ and $L$ and dependent only on things like $\natwo$ and such. However, the assumption above allows to think that $T$ is already bounded wherever we want--the goal is to {\it find} the bound.

Looking through all cubes $I$ we denote by $K_{\chi}$ the smallest constant such that
\begin{equation}
\label{Kchi}
\|T_{\ww}\chi_I\|^2_{\ltwow}\le K_{\chi} \ww(I)\,,\,\,\text{and}\,\,\|T_w\chi_I\|^2_{\ltwowinv} \le K_{\chi}w(I)\,.
\end{equation}

\begin{thm}
\label{Kchithe}
$\sqrt{K_{\chi}}\le \nTtwoinf + \nTtwoinfinv\,.$
\end{thm}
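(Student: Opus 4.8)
\medskip
\noindent\emph{Proof plan.}
Recall that $K_\chi$ is the smallest constant for which the two testing inequalities of \eqref{Kchi} hold over all cubes $I$ --- the one for $T$ applied to $\ww\chi_I$, and the transposed one for $T'$ applied to $w\chi_I$. Hence it suffices to prove
\[
\|T(\ww\chi_I)\|_{\ltwow}\le 2\,\nTtwoinfinv\,\ww(I)^{1/2},\qquad
\|T'(w\chi_I)\|_{\ltwowinv}\le 2\,\nTtwoinf\,w(I)^{1/2},
\]
since together they give $\sqrt{K_\chi}\le 2\max(\nTtwoinf,\nTtwoinfinv)$, hence the stated bound up to the absolute constant $2$ (coming from Kolmogorov's inequality below; as usual this constant is inessential). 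The two estimates are identical after swapping $(T,w)\leftrightarrow(T',\ww)$, so I carry out only the first.

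The mechanism is to promote the weak-type hypothesis to a strong-type bound against the single function $\ww\chi_I$ by duality. Passing to the Lebesgue pairing between $\ltwow$ and $\ltwowinv$ and transposing $T$ --- legitimate because the standing assumption $\lambda\le w\le L$ keeps $\ww\chi_I$ and $g$ in $L^2(\R)$ --- gives
\[
\|T(\ww\chi_I)\|_{\ltwow}=\sup_{\|g\|_{\ltwowinv}\le1}\Big|\int_{\R}T(\ww\chi_I)\,g\,dx\Big|
=\sup_{\|g\|_{\ltwowinv}\le1}\Big|\int_I (T'g)\,d\mu\Big|,\qquad d\mu:=\ww\,dx .
\]
Next I use Kolmogorov's inequality: for any measure $\mu$, any measurable $h$ and any $E$ with $\mu(E)<\infty$,
\[
\int_E |h|\,d\mu\le 2\,\|h\|_{L^{2,\infty}(\mu)}\,\mu(E)^{1/2},
\]
which one gets by cutting the layer-cake integral $\int_0^\infty\mu(E\cap\{|h|>t\})\,dt$ at the level $\|h\|_{L^{2,\infty}(\mu)}\,\mu(E)^{-1/2}$. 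With $E=I$ and $h=T'g$ this bounds the inner integral by $2\,\|T'g\|_{L^{2,\infty}(\ww)}\,\ww(I)^{1/2}$, and the definition of $\nTtwoinfinv$ gives $\|T'g\|_{L^{2,\infty}(\ww)}\le\nTtwoinfinv\,\|g\|_{\ltwowinv}\le\nTtwoinfinv$. This proves the first displayed inequality; the second follows verbatim after transposing $T'$ back to $T$, with $d\mu=w\,dx$ and the definition of $\nTtwoinf$.

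I do not expect a real obstacle here. The single non-formal ingredient is Kolmogorov's inequality, and it is precisely the crux: a weak-$(2,2)$ estimate is as useful as a strong $(2,2)$ one as long as the operator is only ever paired against (multiples of) a characteristic function $\chi_I$. Everything else is weight bookkeeping in the duality step --- verifying that $g$, after multiplication by the appropriate power of $w$, lands in the unit ball of the space on which the weak-type hypothesis lives --- which is routine since $\|\ww\chi_I\|_{\ltwow}^2=\ww(I)$ and $\|w\chi_I\|_{\ltwowinv}^2=w(I)$.
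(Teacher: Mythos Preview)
Your argument is correct and is essentially the same as the paper's, just unpacked more concretely. The paper observes that $\|\chi_I\|_{L^{2,1}(\ww)}=\ww(I)^{1/2}$, rewrites the testing inequality as a bound for the operator $\mathcal{T}f:=w\,T(f\ww)$ from $L^{2,1}(\ww)$ to $L^2(\ww)$, and then invokes Lorentz duality $(L^{2,1})^{*}=L^{2,\infty}$ to identify $\|\mathcal{T}\|$ with $\nTtwoinfinv$. Your Kolmogorov inequality is precisely the concrete content of that duality pairing $\bigl|\int_E h\,d\mu\bigr|\le c\,\|\chi_E\|_{L^{2,1}(\mu)}\|h\|_{L^{2,\infty}(\mu)}$ applied to $E=I$ and $h=T'g$; the preceding $L^2(w)$--$L^2(\ww)$ duality step you write out is exactly the transposition the paper performs when passing to the adjoint of $\mathcal{T}$. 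So the two proofs coincide in substance, with yours avoiding the Lorentz-space formalism at the cost of one explicit layer-cake computation.
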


\begin{thm}
\label{test}
$\nTtwo \le c_1 \natwo +c_2\sqrt{K_{\chi}}\,.$
\end{thm}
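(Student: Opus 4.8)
The plan is to prove this as a "$T1$-type" theorem for the two-weight problem, where the testing condition is exactly \eqref{Kchi}. Because of the isometric identity \eqref{isom} it suffices to bound $\|T_{\ww}f\|_{L^2(w)}$ by $(c_1[w]_{A_2}+c_2\sqrt{K_\chi})\|f\|_{L^2(\ww)}$ for $f$ in a dense class, and by duality it is enough to estimate the bilinear form $(T_{\ww}f,g)_{L^2(w)}=(Tf_1,g_1)$ where $f_1=f\ww$, $g_1=gw$ and the pairing is with respect to Lebesgue measure. First I would fix a random dyadic lattice $\mathcal D$ (in the sense of Nazarov--Treil--Volberg) and expand $f\ww$ and $gw$ into martingale differences adapted to $\mathcal D$, but with the "$A_2$-adapted" normalization: for a cube $Q$ one writes $\Delta_Q(f\ww)$ and compares it against the local averages $\langle w\rangle_Q$, $\langle \ww\rangle_Q$. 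The standard nonhomogeneous paraproduct/CZ-kernel splitting then decomposes $(Tf_1,g_1)$ into (a) a sum of "diagonal"/paraproduct terms where one of the two functions is replaced by its average on a cube and the CZ operator is tested against a characteristic function $\chi_Q$ — these are controlled directly by the testing constant $K_\chi$ via \eqref{Kchi} — and (b) a "far" or "separated" part coming from the smoothness \eqref{CZ2} of the kernel, which is handled by the long-range estimates and produces only the $[w]_{A_2}$ term.

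The key steps, in order, are: (1) reduce via \eqref{isom} and density to a Lebesgue-measure bilinear form and pass to a (random) dyadic model, absorbing the continuous CZ operator into a sum over dyadic shifts/paraproducts up to an acceptable error (this is where the a priori qualitative boundedness, guaranteed by $\lambda\le w\le L$, is used so that all rearrangements are legitimate); (2) organize the cubes by a Corona/stopping-time decomposition relative to the weight $w$ (and simultaneously $\ww$), so that inside each stopping tree the averages $\langle w\rangle$ and $\langle \ww\rangle$ are essentially constant — this is precisely the "corona decomposition of weights" advertised in the title; (3) on each stopping tree, split the form into a paraproduct-type piece, which after the corona normalization becomes a Carleson embedding statement whose Carleson constant is exactly controlled by $K_\chi$ (here one invokes the Carleson embedding theorem with the measure built from $\|T_{\ww}\chi_I\|^2_{L^2(w)}$, bounded by $K_\chi\ww(I)$), plus a "transit"/error piece; (4) sum the transit pieces and the stopping-tree tops using the $A_2$ property to get the geometric-series convergence, yielding the $c_1[w]_{A_2}$ contribution; (5) reassemble and take the expectation over the random lattice to remove the dyadic artifact.

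The main obstacle I expect is step (3)–(4): making the Carleson-measure estimate for the paraproduct genuinely quantitative in $K_\chi$ and $[w]_{A_2}$ simultaneously, i.e. verifying that the sequence $\{a_Q\}$ built from the local action of $T_{\ww}$ on $\chi_Q$ (relative to the corona structure) satisfies a Carleson packing condition with constant $\lesssim K_\chi$, and not $\sqrt{K_\chi}$ times something that grows with the depth of the stopping tree. One has to be careful that the corona/stopping construction does not lose a factor depending on $[w]_{A_2}$ in a multiplicative (rather than additive) way, which is why the stopping thresholds must be chosen as absolute constants and the two weights $w,\ww$ handled symmetrically. The long-range/smoothness term (step 1–2) is comparatively routine: the kernel bound \eqref{CZ2} gives summable off-diagonal decay, and the resulting double sum is estimated by the $A_2$ characteristic through the standard $\langle w\rangle_Q\langle \ww\rangle_Q\le[w]_{A_2}$ inequality, contributing only to the $c_1[w]_{A_2}$ term. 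Finally, one checks that all "error" terms accumulated along the way are likewise bounded by $c_1[w]_{A_2}+c_2\sqrt{K_\chi}$, completing the proof.
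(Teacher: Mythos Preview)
Your outline has the right general architecture --- bilinear form, random dyadic grids, Haar decomposition, corona stopping, Carleson embedding --- and this is indeed the skeleton of the paper's argument. But there is a concrete gap in step (2)--(3), and it is exactly the point you flag as the main obstacle.

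You propose to run the corona by stopping when the averages $\langle w\rangle$, $\langle w^{-1}\rangle$ cease to be essentially constant. That is \emph{not} the stopping rule the paper uses, and it is not clear it suffices here. In the short-range part of the form one is left, after the usual ``neighbor / paraproduct / remainder'' splitting, with \emph{stopping terms} of the type
\[
\langle \Delta_I^{\mu}f\rangle_{\mu, I_i}\,\bigl(T_{\mu}(\chi_{\hat I\setminus I_i}),\Delta_J^{\nu}g\bigr)_{\nu},
\]
and the natural bound for the inner pairing is governed by the Poisson-type quantity $\PP_{I_i}(\chi_{\hat I\setminus I_i}\,d\mu)$, not by the raw averages of $w$ or $w^{-1}$. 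A weight-average corona gives no direct control of $[\PP_{I_i}(\chi_{\hat I\setminus I_i}\,d\mu)]^2\,\nu(I_i)/\mu(I_i)$, so these terms do not obviously sum. The paper's decisive idea is to base the corona on a \emph{pivotal} stopping criterion: stop at $I\subset \hat I$ when
\[
\bigl[\PP_{I}(\chi_{\hat I\setminus I}\,d\mu)\bigr]^2\,\nu(I) \;>\; \tilde K\,\mu(I),
\]
and to verify, via Buckley's maximal-function bound, that for $\mu=w^{-1}dx$, $\nu=w\,dx$ the pivotal constant satisfies $K\le c\,[w]_{A_2}^2$. This simultaneously (a) makes the stopping terms sum with constant $\sqrt K\le c\,[w]_{A_2}$ and (b) guarantees the stopping children carry at most half the $\mu$-mass of their parent, so the corona tree packs. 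The ``corona decomposition of weights'' in the title refers to this pivotal corona, not to a weight-average one.

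Two further technical points your sketch underplays. First, the paper uses \emph{two} independent random lattices $\mathcal D^{\mu}$, $\mathcal D^{\nu}$ together with a good/bad decomposition of $f$ and $g$; it does \emph{not} absorb $T$ into a sum of dyadic shifts --- that is a different, later technology, and invoking it here would be circular or at least anachronistic. Second, among the paraproducts that emerge there is one, built from $\mathbb P_{\nu,Q_S}T_{\mu}(\chi_{\hat S\setminus S})$, whose projections over nested stopping cubes are \emph{not} mutually orthogonal. Bounding it requires a separate argument showing the off-diagonal interactions decay exponentially in the stopping-tree distance, and this again relies on the pivotal inequality. Your step (3) tacitly assumes an orthogonality that is simply absent for this piece.

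In short: to convert your outline into a proof, replace the weight-average corona by the pivotal corona, prove $K\le c\,[w]_{A_2}^2$ (this is essentially the only place the $A_2$ hypothesis is genuinely exploited beyond $[\mu,\nu]_{A_2}\le[w]_{A_2}$), and add a real treatment of the non-orthogonal paraproduct.
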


Obviously the combination of Theorems \ref{Kchithe}, \ref{test} gives our first main result, namely, Theorem \ref{strongweak}.

\section{The beginning of the proof of  Theorem \ref{test}}
\label{testbegin}

In what follows we use Nazarov-Treil-Volberg preprint \cite{NTVlost}.  We used the following {\bf fixed} notations:
\begin{equation}
\label{fixed1}
d\mu := \ww dx\,,\,\, d\nu:= w\,dx\,.
\end{equation}

Let $f\in L^2(\mu), g\in L^2(\nu)$ be two test functions. We can think without the loss of generality that they have the
compact support. Then let us think that their support is in $[\frac14,\frac34]^d$. Let $\mathcal{D}^{\mu}, \mathcal{D}^{\nu}$
be two dyadic lattices of
$\R$. We can think that they are both shifts of the same standard dyadic lattice $\mathcal{D}$, such that $[0,1]^d\in
\mathcal{D}$, and that
$\mathcal{D}^{\mu} =\mathcal{D}+\om_1,
\mathcal{D}^{\nu}=\mathcal{D}+\om_2$, where  vectors $\om_1,\om_2$ have all of their coordinates in $ [-\frac14,\frac14]$. We have a natural probability space of
pairs of such dyadic lattices:
$$
\Om :=\{(\om_1,\om_2) \in [-\frac14,\frac14]^{2n}\}
$$
provided with probability $\mathbb{P}$ which is equal to normalized Lebesgue measure on $[-\frac14,\frac14]^{2n}$.
We called these two independent dyadic lattices $\mathcal{D}^{\mu}, \mathcal{D}^{\nu}$ because they will be used to
decompose $f\in L^2(\mu), g\in L^2(\nu)$ correspondingly. This will be exactly the same type of decomposition as in the
``nonhomogeneous $T1$" theorems we met \cite{NTV1}-\cite{NTV5}. We use  the notion  of operators $\Delta_I^{\mu}, \Delta_J^{\nu}$.  (Notice that we will always keep the name $I$ for cubes from $\mathcal{D}^{\mu}$, and we always keep the name $J$ for cubes from $ \mathcal{D}^{\nu}$ !)  By this we mean the following. In $L^2(\mu)$ there is a subspace $\HH_I^{\mu}$ of function supported on $I$ and having constant values on each son of $I$ and such that the average value of a function with respect to $d\mu$ is zero. The orthogonal projections 
\begin{equation}
\label{ort1}
\Delta_I^{\mu}: L^2(\mu) \rightarrow \HHI
\end{equation}
are mutually orthogonal. Similarly, the orthogonal projections 
\begin{equation}
\label{ort1}
\Delta_J^{\nu}: L^2(\mu) \rightarrow \HHJ
\end{equation}
are mutually orthogonal.

In particular, we will be often using equalities
\begin{equation}
\label{ort2}
\sum_I\|\Delta_I^{\mu} f_I\|^2 = \|\sum_I\Delta_I^{\mu} f_I\|^2=\sup_{\psi\in L^2(\mu), \|\psi\|_{L^2(\mu)}=1}|\sum_I (\Delta_I^{\mu}f_I, \psi)_{\mu}|^2\,.
\end{equation}

We always use the {\bf fixed} notation

\begin{equation}
\label{fixed2}
\|\cdot\|_{\mu}:=\|\cdot\|_{L^2(\mu)}\,,\,\, \|\cdot\|_{\nu}:=\|\cdot\|_{L^2(\nu)}\,,\,\,(\cdot,\cdot)_{\mu}:=(\cdot,\cdot)_{L^2(\mu)}\,,\,\,(\cdot,\cdot)_{\nu}:=(\cdot,\cdot)_{L^2(\nu)}\,.
\end{equation}

Let us rewrite \eqref{ort2} (with the change of $\mu$ to $\nu$ and of course $I$ to $J$) as another equality which we will be using often

\begin{equation}
\label{ort3}
\sum_J\|\Delta_J^{\nu} f_J\|^2 = \|\sum_J\Delta_J^{\nu} f_J\|^2=\sup_{\psi\in L^2(\nu), \|\psi\|_{\nu}=1}|\sum_J (f_J, \Delta_J^{\nu}(\psi))_{\nu}|^2\,.
\end{equation}

One piece of notation:
Given a cubes $I\subset \hat{I}$ we introduce
$$
\PPI (\chi_{\hat{I}\setminus I}d\mu) := \int_{\hat{I}\setminus I} \frac{\ell(I)^{\eps}}{(\ell(I) +|c(I)-x|)^{d+\eps}} \,d\mu(x)\,.
$$
Here $c(I)$ denotes the center of the cube $I$.

Here is our first lemma.

\begin{lm}
\label{pivotal1}
Let $d\mu=w^{-1}dx, d\nu=wdx$, $w\in A_2$. Then for any cube $I\in D^{\mu}$ and any collection of disjoint open cubes $I_{\al}$, $I_{\al}\subset I$, we have
\begin{equation}
\label{PivotalCond}
\sum_{\al} [\PP_{I_{\al}}(\chi_{I\setminus I_{\al}}d\mu)]^2\nu(I_{\al}) \le K \mu(I)\,,
\end{equation}
with $K=c\natwo^2$.
\end{lm}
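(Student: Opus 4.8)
The plan is to estimate the Poisson-type tails $\PP_{I_\al}(\chi_{I\setminus I_\al}d\mu)$ by an average of $w^{-1}$ over $I$, exploiting that the kernel $\ell(I_\al)^\e/(\ell(I_\al)+|c(I_\al)-x|)^{d+\e}$ decays and is essentially a sum of normalized averages over the dyadic annuli around $I_\al$. First I would dominate, for a fixed $\al$,
$$
\PP_{I_\al}(\chi_{I\setminus I_\al}d\mu) \le c\sum_{k\ge 0} 2^{-k\e}\,\langle \chi_I w^{-1}\rangle_{2^k I_\al},
$$
where $2^k I_\al$ is the $k$-fold dyadic parent of $I_\al$ (truncated to those contained in, or meeting, a fixed dilate of $I$), since the contribution of the annulus at scale $2^k\ell(I_\al)$ to the integral is $\lesssim 2^{-k\e}$ times the $\mu$-average over that annulus, and the annulus is contained in $2^{k+1}I_\al$. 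This reduces matters to controlling $\sum_\al \big(\sum_k 2^{-k\e}\langle \chi_I w^{-1}\rangle_{2^kI_\al}\big)^2 w(I_\al)$.

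Next I would bring in the $A_2$ hypothesis in the one allowed place. By Cauchy--Schwarz in $k$ (absorbing one geometric factor), the square is $\le c\sum_k 2^{-k\e}\langle \chi_I w^{-1}\rangle_{2^kI_\al}^2$, and then $w(I_\al)\le w(2^kI_\al)$, so the whole sum is bounded by
$$
c\sum_k 2^{-k\e}\sum_\al \langle w^{-1}\rangle_{2^kI_\al}^2\,w(2^kI_\al).
$$
Using $\langle w^{-1}\rangle_{2^kI_\al}\langle w\rangle_{2^kI_\al}\le\natwo$ gives $\langle w^{-1}\rangle_{2^kI_\al}^2 w(2^kI_\al)\le \natwo^2\, |2^kI_\al|^2/\,w(2^kI_\al)\cdot(1/|2^kI_\al|)\cdot\ldots$; more cleanly, $\langle w^{-1}\rangle_{2^kI_\al}^2\,w(2^kI_\al) \le \natwo\,\langle w^{-1}\rangle_{2^kI_\al}\,|2^kI_\al| = \natwo\,w^{-1}(2^kI_\al)$. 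So for each fixed $k$ I must bound $\sum_\al w^{-1}(2^kI_\al)$. Since the $I_\al$ are disjoint but their $k$-th parents $2^kI_\al$ need not be, I would group the $\al$'s according to their common $k$-th ancestor $L$: the cubes $I_\al$ with $2^kI_\al=L$ are disjoint subcubes of $L$, so $\sum_{\al:\,2^kI_\al=L}|I_\al|\le|L|$, but I actually need the bound in terms of $w^{-1}(L)=\mu(L)$, not a count, and there are at most $2^{dk}$ choices of $I_\al$ inside each such $L$. This is the step that costs the factor $2^{dk}$: $\sum_\al w^{-1}(2^kI_\al)\le \sum_{L} 2^{dk}\,\mu(L)\le 2^{dk}\,\mu(\tilde I)$, where $\tilde I$ is a fixed bounded dilate of $I$ and the $L$'s at a fixed generation are disjoint. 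Finally $\mu(\tilde I)\le c\,\mu(I)$ by the $A_2$ (doubling) property of $\mu$.

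Summing, I get $\sum_\al[\,\cdots\,]^2 w(I_\al)\le c\,\natwo\,\mu(I)\sum_k 2^{-k\e}2^{dk}$, which diverges — so the naive annular splitting is too lossy and the real work is to avoid the $2^{dk}$. The main obstacle, as I see it, is precisely this: one cannot afford to pass from $I_\al$ to a large parent $2^kI_\al$ and lose doubling constants geometrically in $k$. The fix I would pursue is to keep the estimate \emph{local to $I_\al$}: write $\PP_{I_\al}(\chi_{I\setminus I_\al}d\mu)\le \PP_{\hat I_\al}(\chi_{I\setminus \hat I_\al}d\mu) + (\text{near part on }\hat I_\al\setminus I_\al)$ only for the single dyadic parent $\hat I_\al$, and handle the near part by $\langle w^{-1}\rangle_{\hat I_\al}\le c\langle w^{-1}\rangle_{I_\al}$ (bounded overlap, no loss), so that effectively one may assume $\PP_{I_\al}(\chi_{I\setminus I_\al}d\mu)\le c\,\langle w^{-1}\rangle_{I_\al} + \PP_{I_\al}(\chi_{I\setminus \hat I_\al}d\mu)$. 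For the genuinely-far tail one exploits that it is comparable to $\mu$-averages that \emph{increase} as the cube grows, controlled once by $A_2$: concretely $\PP_{I_\al}(\chi_{I\setminus\hat I_\al}d\mu)\cdot w(I_\al) \lesssim \natwo\cdot \frac{\ell(I_\al)^\e}{\ell(I)^\e}\langle w\rangle_{I_\al}^{-1}\cdots$, and then summing a \emph{geometrically convergent in scale-difference} series after grouping the $I_\al$ by generation — where now disjointness of the $I_\al$ themselves (not their parents) gives $\sum_\al |I_\al|\le |I|$ and, crucially, $\sum_\al \langle w\rangle_{I_\al}\chi_{I_\al}\le$ (maximal function of $\chi_I w$), whose $L^1(w^{-1})$ quasi-norm is controlled by $\natwo\,\mu(I)$ via the weighted weak-$(1,1)$ / Carleson bound. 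Thus the endgame is a weighted maximal-function estimate, and the quadratic dependence $K=c\natwo^2$ comes from using $A_2$ twice: once to replace $w(I_\al)$ by $\natwo\,\mu$-mass, once more in the maximal-function/doubling bound for $\mu$.
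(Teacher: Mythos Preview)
Your annular decomposition
\[
\PP_{I_\al}(\chi_{I\setminus I_\al}d\mu) \le c\sum_{k\ge 0} 2^{-k\eps}\,\langle \chi_I w^{-1}\rangle_{2^k I_\al}
\]
is correct, but you then take a wrong turn. Each average $\langle \chi_I w^{-1}\rangle_{2^k I_\al}$ is at most $M(\chi_I w^{-1})(x)$ for \emph{every} $x\in I_\al$, so the geometric series collapses and you already have
\[
\PP_{I_\al}(\chi_{I\setminus I_\al}d\mu) \le c\,\inf_{x\in I_\al} M(\chi_I w^{-1})(x).
\]
This is the pointwise bound the paper uses. Once you see it, the disjointness of the $I_\al$ turns the sum into an integral,
\[
\sum_\al \big[\PP_{I_\al}(\chi_{I\setminus I_\al}d\mu)\big]^2\,w(I_\al) \le c\int_I \big[M(\chi_I w^{-1})(x)\big]^2\,w(x)\,dx,
\]
and Buckley's sharp weighted maximal inequality $\|M\|_{L^2(w)\to L^2(w)}\le c\,\natwo$ finishes immediately:
$\|M(\chi_I w^{-1})\|_{L^2(w)}^2 \le c\,\natwo^2\,\|\chi_I w^{-1}\|_{L^2(w)}^2 = c\,\natwo^2\,\mu(I)$.

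Your route instead applies Cauchy--Schwarz in $k$ and then the $A_2$ condition scale-by-scale, which forces you to control $\sum_\al w^{-1}(2^k I_\al)$ and incurs the fatal $2^{dk}$. The attempted repair in your second half is groping toward the maximal function, but it never crystallizes: the expressions you write (e.g.\ ``$\PP_{I_\al}(\chi_{I\setminus\hat I_\al}d\mu)\cdot w(I_\al) \lesssim \natwo\cdot \frac{\ell(I_\al)^\eps}{\ell(I)^\eps}\langle w\rangle_{I_\al}^{-1}\cdots$'') are not correct inequalities, and the final appeal to a ``weighted weak-$(1,1)$ / Carleson bound'' is not a substitute for Buckley's $L^2(w)$ bound, which is exactly what is needed and gives the sharp $\natwo^2$. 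The missing idea is simply: sum the annuli \emph{before} squaring, recognize the maximal function, and invoke Buckley.
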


\begin{proof}
It is easy to see that
$$
[\PP_{I_{\al}}(\chi_{I\setminus I_{\al}}d\mu) \le c\,\inf_{x\in I_{\al}} M(\chi_I w^{-1})(x)\,.
$$
So
$$
\sum_{\al} [\PP_{I_{\al}}(\chi_{I\setminus I_{\al}}d\mu)]^2\nu(I_{\al}) \le c\,\int_I [M(\chi_I w^{-1}]^2(x)\,dw(x)\,.
$$
The last quantity is bounded by $c\,\natwo^2\,w^{-1}(I)$ by Buckley's theorem, see \cite{Buck1}.
We  are done.
\end{proof}

\bigskip

\noindent{\bf Introducing pivotal constant $K$.} Let us denote by $K$ the smallest possible quantity in the right hand side of \eqref{PivotalCond}. We call this constant the {\it pivotal} constant.
Let $\tilde{K} = 100\,K$.

\bigskip

Let us introduce the following notations:
Let $d\mu=w^{-1}dx, d\nu=wdx$, $w\in A_2$. Consider an arbitrary cube $\hat{I}\in \mathcal{D}^{\mu}$ and call a dyadic cube $I\in \mathcal{D}^{\mu}$  {\it stopping} if it is a maximal cube such that
\begin{equation}
\label{stop1}
[\PPI(\chi_{\hat{I}\setminus I}\,d\mu)]^2\nu(I) >\tilde{K}\, \mu(I)\,.
\end{equation}

Let us notice then

\begin{lm}
\label{pivotal2}
Let $d\mu=w^{-1}dx, d\nu=wdx$, $w\in A_2$. 
Let $\{I_{\al}\}$ denote stopping subcubes of $\hat{I}$. Then
\begin{equation}
\label{stop2}
\sum_{\al} \mu(I_{\al})\le \frac12 \mu(\hat{I})\,.
\end{equation}
\end{lm}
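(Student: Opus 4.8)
The plan is to exploit the maximality in the definition \eqref{stop1} of the stopping cubes together with the pivotal estimate of Lemma \ref{pivotal1}. The stopping cubes $\{I_\al\}$ are, by construction, pairwise disjoint (a maximal cube cannot be strictly contained in another maximal cube), and each of them is contained in $\hat I$. Hence they form precisely the kind of family to which Lemma \ref{pivotal1} (equivalently, the definition of the pivotal constant $K$ applied with the fixed cube $\hat I$) applies: we get
\[
\sum_\al [\PP_{I_\al}(\chi_{\hat I\setminus I_\al}\,d\mu)]^2 \nu(I_\al)\le K\,\mu(\hat I)\,.
\]

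Now I would use the defining inequality \eqref{stop1} for each stopping cube, namely $[\PP_{I_\al}(\chi_{\hat I\setminus I_\al}\,d\mu)]^2\nu(I_\al)>\tilde K\,\mu(I_\al)=100\,K\,\mu(I_\al)$. Summing this over $\al$ gives
\[
100\,K\sum_\al \mu(I_\al)\le \sum_\al [\PP_{I_\al}(\chi_{\hat I\setminus I_\al}\,d\mu)]^2\nu(I_\al)\le K\,\mu(\hat I)\,,
\]
and dividing by $100\,K$ yields $\sum_\al\mu(I_\al)\le \frac{1}{100}\mu(\hat I)\le \frac12\mu(\hat I)$, which is even better than claimed. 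So the argument is a two-line "stopping time" estimate once the geometry is in place.

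The one point that needs a little care — and which I expect to be the only real obstacle — is making sure the hypotheses of Lemma \ref{pivotal1} are genuinely met by the stopping family: the cubes must be disjoint and each contained in the single fixed cube $\hat I$. Disjointness follows from maximality, but one should check that distinct maximal cubes satisfying \eqref{stop1} cannot overlap at all (two dyadic cubes are either nested or disjoint, and nesting would contradict maximality of the smaller-containing one), and that we are using the version of the pivotal bound with ambient cube $\hat I$ rather than some other cube. There is also the trivial degenerate possibility that $\hat I$ itself satisfies \eqref{stop1}, in which case there is a single stopping cube $I_\al=\hat I$; but then \eqref{stop1} reads $[\PP_{\hat I}(\chi_\emptyset\,d\mu)]^2\nu(\hat I)=0>\tilde K\mu(\hat I)$, which is impossible, so this case does not occur and the family always consists of proper subcubes. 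Apart from these bookkeeping checks the proof is immediate.

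Thus the steps, in order, are: (1) record that $\{I_\al\}$ are pairwise disjoint open cubes contained in $\hat I$; (2) apply Lemma \ref{pivotal1} (or directly the definition of the pivotal constant $K$) with ambient cube $\hat I$ to bound $\sum_\al[\PP_{I_\al}(\chi_{\hat I\setminus I_\al}\,d\mu)]^2\nu(I_\al)$ by $K\mu(\hat I)$; (3) apply the stopping inequality \eqref{stop1} to each $I_\al$ and sum, getting the lower bound $100K\sum_\al\mu(I_\al)$; (4) combine and divide by $100K$. The factor $100$ built into $\tilde K$ is exactly what gives room to conclude $\sum_\al\mu(I_\al)\le\frac12\mu(\hat I)$ with plenty to spare.
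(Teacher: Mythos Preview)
Your proof is correct and is exactly the argument the paper has in mind: the paper simply says the result is obvious from \eqref{stop1}, the choice of $K$, and Lemma \ref{pivotal1}, and you have written out precisely that two-line stopping-time computation (with the additional, correct, observation that $\hat I$ itself can never be a stopping cube). Nothing further is needed.
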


The proof is obvious from \eqref{stop1}, the choice of $K$ and from Lemma \ref{pivotal1}.
 We can introduce the pivotal constant if we change $\mu$ to $\nu$ but in the case $\mu=w^{-1}dx, \nu=wdx, w\in A_2$ it will be the same $c\,\natwo^2$ because of  the symmetry $\natwo=[w^{-1}]_{A_2}$.

\vspace{.2in}

The rest is devoted to the proof of the following result from \cite{NTVlost}. In this result measures $\mu, \nu$ are arbitrary, even the doubling property is not assumed. We use the notations
$$
[\mu,\nu]_{A_2}:=\sup_I\langle \mu\rangle_I\langle \nu\rangle_I\,,\,\,\text{where}\,\,\langle \mu\rangle_I:=\frac{\mu(I)}{|I|}\,.
$$

\begin{thm}
\label{pivotal3}
Let $\mu,\nu$ be two {\it arbitrary} measures on $\R$ satisfying the pivotal condition \eqref{PivotalCond} and its symmetric version with $\mu$ and $\nu$ exchanged, both with constant $K$. Let $T$ be an arbitrary \cz operator. Let the following test conditions be satisfied as well:
\begin{equation}
\label{Kchi}
\|T_{\mu}\chi_I\|^2_{\nu}\le K_{\chi} \mu(I)\,,\,\,\text{and}\,\,\|T_{\nu}\chi_I\|^2_{\mu} \le K_{\chi}\nu(I)\,.
\end{equation}
Then 
\begin{equation}
\label{KKchi}
\|T: L^2(\mu) \rightarrow L^2(\nu)\| \le c_0\sqrt{[\mu,\nu]_{A_2}}+  c_1\sqrt{K} + c_2 \sqrt{K_{\chi}}\,.
\end{equation}
\end{thm}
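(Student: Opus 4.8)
The proof is a two-weight $T1$-type argument in the spirit of nonhomogeneous harmonic analysis. First I would fix the two independent dyadic lattices $\mathcal D^\mu$, $\mathcal D^\nu$ (shifts of a common lattice by $\om_1,\om_2$) and expand $f\in L^2(\mu)$, $g\in L^2(\nu)$ into their martingale differences: $f=\sum_I \Delta_I^\mu f$, $g=\sum_J\Delta_J^\nu g$. This reduces estimating $(Tf,g)_\nu$ to controlling the bilinear sum $\sum_{I,J}(T\Delta_I^\mu f,\Delta_J^\nu g)_\nu$. I would split this sum according to the relative position and size of $I$ and $J$: the ``diagonal'' terms where $I$ and $J$ are comparable in size and close, and the ``off-diagonal'' terms $\ell(I)\le \ell(J)$ (and symmetrically), further split into the case where $I$ is deep inside some son of $J$ versus the case where $I$ is near the boundary of $J$ (the ``bad'' cubes). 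The probabilistic averaging over $\Om$ lets one discard the bad cubes at a cost absorbed by the final constant --- this is the standard Nazarov--Treil--Volberg random dyadic lattice mechanism.

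\textbf{Key steps.} (1) For the long-range/off-diagonal interaction where $I$ is strictly inside a son of $J$, I would use the smoothness of the kernel \eqref{CZ2} to replace $T\Delta_I^\mu f$ on $J$ by a ``Poisson-like'' tail, producing exactly the quantity $\PP_I(\chi_{\hat I\setminus I}d\mu)$ appearing in the pivotal condition; summing these up is where the pivotal constant $K$ enters, contributing $c_1\sqrt K$. (2) For the diagonal/comparable terms, one reduces to a ``paraproduct'' plus a local term; the paraproduct is handled by a two-weight Carleson embedding theorem whose Carleson-measure constant is controlled by $[\mu,\nu]_{A_2}$, giving the $c_0\sqrt{[\mu,\nu]_{A_2}}$ contribution. (3) The genuinely ``local'' pieces --- where the action of $T$ is seen through its behavior on characteristic functions of cubes --- are precisely controlled by the test conditions \eqref{Kchi}, contributing $c_2\sqrt{K_\chi}$. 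Throughout one runs a stopping-time decomposition: fixing $\hat I$, one selects the maximal cubes $I_\al$ violating \eqref{stop1}, uses Lemma \ref{pivotal2} to get geometric decay $\sum_\al\mu(I_\al)\le\frac12\mu(\hat I)$, and iterates, so the good (non-stopping) region of each generation carries a Carleson-packing estimate with constant $\sim\tilde K\sim K$.

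\textbf{Main obstacle.} The hard part is the bookkeeping that glues these pieces together: organizing the double sum over $(I,J)$ into the stopping-tree structure so that each martingale difference $\Delta_I^\mu f$ is charged to exactly one stopping cube, and then showing that the resulting ``towers'' of paraproduct-type and long-range terms telescope into quantities one can dominate by $(K+K_\chi+[\mu,\nu]_{A_2})\|f\|_\mu^2\|g\|_\nu^2$ uniformly --- in particular, checking that the pivotal condition is genuinely strong enough to sum the long-range interactions across all scales without any doubling hypothesis on $\mu,\nu$. A secondary technical point is justifying the probabilistic removal of bad cubes in the non-doubling setting, i.e. showing $\E_\Om$ of the bad part is a small fraction of $\|f\|_\mu\|g\|_\nu$ with constants independent of the a priori bounds $\la\le w\le L$. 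Once these are in place, summing the four types of terms yields \eqref{KKchi}.
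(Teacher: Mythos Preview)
Your overall architecture matches the paper's proof: random independent lattices $\mathcal D^\mu,\mathcal D^\nu$, martingale difference expansion, probabilistic removal of bad cubes, splitting by relative size/position of $I,J$, and a stopping-time (corona) tree built from the pivotal criterion. So the skeleton is right. But you have misassigned which constant controls which piece, and this matters because the hardest step is not where you think it is.

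Concretely: the $[\mu,\nu]_{A_2}$ constant does \emph{not} enter through a Carleson embedding for a paraproduct. In the paper it controls only the genuinely \emph{disjoint} long-range terms ($I\cap J=\emptyset$, or $I,J$ comparable and separated), via a direct $L^2(\mu)\to L^2(\nu)$ bound for the Poisson-type averaging operator $f\mapsto\int K_y(t,s)f(t)\,d\mu(t)$. The diagonal/comparable terms are handled \emph{directly} by the testing constant $K_\chi$, with no paraproduct at all. Where paraproducts actually appear is in the \emph{inner} short-range interaction ($J\subset I$, $\ell(J)\ll\ell(I)$): one writes $(T_\mu\Delta_I^\mu f,\Delta_J^\nu g)_\nu$ as a sum of a ``neighbor'' term (estimated like the disjoint long-range, hence by $[\mu,\nu]_{A_2}$), a ``stopping'' term (this is where your Poisson tail $\PP_I(\chi_{\hat I\setminus I}\,d\mu)$ and the pivotal $K$ enter, via the stopping tree), and a ``difficult'' term $\langle\Delta_I^\mu f\rangle_{\mu,I_i}(T_\mu\chi_{\hat S},\Delta_J^\nu g)_\nu$. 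The difficult terms aggregate into several paraproducts whose Carleson constants are controlled by $K+K_\chi$, not by $[\mu,\nu]_{A_2}$.

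The step you are underestimating is the second of these paraproducts, $\pi^Q f=\sum_{S\in\mathcal S}\langle f\rangle_{\mu,F(S)}\,\mathbb P_{\nu,Q_S}(T_\mu\chi_{\hat S\setminus S})$. Here the projections $\mathbb P_{\nu,Q_S}$ are \emph{not} mutually orthogonal (the Carleson boxes $Q_S$ are nested), so the usual Carleson embedding does not apply directly. One must expand $\|\pi^Q f\|_\nu^2$ into a diagonal part plus cross terms indexed by pairs $S'\subsetneq S$ in the stopping tree, and then show the cross terms decay like $2^{-c\,r(S',S)}$ in the stopping-tree distance. This exponential decay is obtained by combining the kernel smoothness (which gives a factor $(\ell(J)/\ell(S))^{\eps/2}$ in $\|\mathbb P_{\nu,Q_{S'}}T_\mu\chi_{\hat S\setminus S}\|_\nu$) with a second, genuinely necessary application of the pivotal condition \eqref{PivotalCond} to sum over the disjoint $S'$ inside each $F_\beta$. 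Your sketch treats the pivotal condition as feeding only into the long-range tails and the stopping-tree packing; in fact it is used a third time precisely here, and without it the off-diagonal part of $\pi^Q$ does not close. This is the ``bookkeeping'' you flag as the main obstacle, but it is more than bookkeeping: it is an analytic estimate that requires the full strength of the pivotal hypothesis.
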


The proof of Theorem \ref{test} follows from Theorem \ref{pivotal3} immediately if we take into consideration that we proved that the pivotal constant for $\mu=w^{-1}dx, \nu=wdx, w\in A_2$ is $K=c\,\natwo^2$. Even though the above theorem is proved in \cite{NTVlost} we repeat here the proof with some modifications. We do this to slightly simplify \cite{NTVlost} and to make the roles of the contants involved in the proof completely transparent.

\section{The proof of Theorem \ref{pivotal3}. The start}
\label{pivotal3start}

Fix two test functions $f\in L^2(\mu), g\in L^2(\nu)$ and
consider $$
\Delta_I^{\mu}(f), \Delta_I^{\nu}(g)\,.
$$
Also, let $I_0^{\mu}$ denote the cube of $\mathcal{D}^{\mu}$ of side-length $1$ containing $\supp(f)$, the same about
$I_0^{\nu}$ changing $f$ to $g$ and $\mu$ to $\nu$.
$$
\Lambda^{\mu}(f) := (\int_{I_0^{\mu}} f\,d\mu)\,\chi_{I_0^{\mu}},\,\Lambda^{\nu}(g) := (\int_{I_0^{\nu}}
g\,d\nu)\,\chi_{I_0^{\nu}}\,.
$$

It is easy to see that functions $\Lambda^{\mu}(f), \Delta_I^{\mu}(f),  I\in \mathcal{D}^{\mu}$ are all pairwise orthogonal 
with respect to the scalar product $(\cdot,\cdot)_{\mu}$ of $L^2(\mu)$. The same is true for $\Lambda^{\nu}(f),
\Delta_I^{\nu}(f),  I\in \mathcal{D}^{\nu}$ with respect to the scalar product $(\cdot,\cdot)_{\nu}$ of $L^2(\nu)$.
 Thus,

\begin{equation}
\label{decompmu}
f = \Lambda^{\mu}(f) + \sum_{I\in\mathcal{D}^{\mu}, I\subset I_0^{\mu}}\Delta_I^{\mu}(f),\, \,\|f\|^2_{\mu} =
\|\Lambda^{\mu}(f)\|_{\mu}^2 + \sum_{I\in\mathcal{D}^{\mu}, I\subset I_0^{\mu}}\|\Delta_I^{\mu}(f)\|_{\mu}^2\,.
\end{equation}
Similarly,

\begin{equation}
\label{decompnu}
g = \Lambda^{\nu}(g) + \sum_{I\in\mathcal{D}^{\mu}, I\subset I_0^{\mu}}\Delta_I^{\nu}(g),\,\, \|g\|^2_{\nu} =
\|\Lambda^{\nu}(g)\|_{\nu}^2 + \sum_{I\in\mathcal{D}^{\nu}, I\subset I_0^{\nu}}\|\Delta_I^{\nu}(g)\|_{\nu}^2\,.
\end{equation}

These decompositions and the assumption \eqref{KKchi} imply in a very easy fashion that we can consider only
the case 
\begin{equation}
\label{Lambdazero}
\Lambda^{\mu}(f) = 0,\, \Lambda^{\nu}(g) =0\,. 
\end{equation}
In fact, $(T_{\mu}f, g)_{\nu} = (T_{\mu}f-\Lambda^{\mu}(f), g)_{\nu} + (\int_{I_0^{\mu}} f\,d\mu)(T_{\mu}(\chi_{I_0^{\mu}}),
g)_{\nu}$, and the second term is bounded by $\sqrt{K_{\chi}} \|f\|_{\mu}\|g\|_{\nu}$ trivially by \eqref{Kchi}. Using
\eqref{Kchi} one can get rid of $\Lambda^{\nu}(g)$ as well. 

So we always work under the assumption \eqref{Lambdazero}. We pay the constant $c\sqrt{K_{\chi}}$ to use this assumption.
Now, for simplicity, we think that $f,g$ are real valued. The proof will consist of cutting the sum below into several subsums (there seems to be at least seven of them) and using the cancellations separately in those sums:

$$
(T_{\mu} f, g)_{\nu} = \sum_{I\in \mathcal{D}^{\mu}, J\in \mathcal{D}^{\nu}}  (T_{\mu}\Delta_I^{\mu}(f),\Delta_J^{\nu}(g))_{\nu}\,.
$$

\subsection{Bad and good parts of $f$ and $g$}
\label{Badandgood}

We use ``good-bad" decomposition of test functions $f,g$ exactly as this has been done in \cite{NTV1}, \cite{NTV3}--\cite{NTV5}.
Consider two fixed lattices $\mathcal{D}^{\mu}, \mathcal{D}^{\nu}$ (so we fixed a point in $\Om$, see the notations above).
Fix forever $\delta$ as follows:
\begin{equation}
\label{delta}
\delta=\frac{\eps}{2(n+\eps)}\,.
\end{equation}

We call the cube $I\in \mathcal{D}^{\mu}$ bad if there exists $J\in\mathcal{D}^{\nu}$ such that
\begin{equation}
\label{14}
|J|\geq |I|,\,\,\, \dist (e(J), I) < \ell(J)^{1-\delta} \ell(I)^{\delta}\,.
\end{equation}
Here $e(J)$ is the union of boundaries of all sons of $J$. Similarly one defines bad cubes $J\in \mathcal{D}^{\nu}$.

\vspace{.2in}

\noindent{\bf Definition.}
We fix a large integer $r$ to be chosen later, and we say that $I\in \mathcal{D}^{\mu}$ is {\it essentially
bad} if there exists $J\in\mathcal{D}^{\nu}$ satisfying \eqref{14} such that it is much longer than $I$,
namely,
$\ell(J) \geq 2^r\,\ell(I)$.

If the cube is not essentially bad, it is called {\it good}.

Now
\begin{equation}
\label{essbad1}
f = f_{bad} + f_{good},\,\,\, f_{bad}:= \sum_{I\in\mathcal{D}^{\mu},\,I\,\text{is essentially bad}}\Delta_I^{\mu}f\,.
\end{equation}
The same type of decomposition is used for $g$:
\begin{equation}
\label{essbad2}
g = g_{bad} + g_{good},\,\,\, g_{bad}:= \sum_{J\in\mathcal{D}^{\nu},\,J\,\text{is essentially bad}}\Delta_I^{\nu}g\,.
\end{equation}

\subsection{Estimates on good functions}
\label{goodTwoWeightgood}

We refer the reader to \cite{NTV1}, \cite{NTV3}--\cite{NTV5} for the  detailed explanation 
that it is enough to estimate $|(T_{\mu} f_{good}g_{good})_{\nu}|$. However, here we also give an explanation for the sake of completeness.

\begin{equation}
\label{threeterms}
(T_{\mu} f, g)_{\nu} = (T_{\mu} f_{good}, g_{good})_{\nu} + (T_{\mu} f_{bad}, g_{good})_{\nu} + (T_{\mu} f,
g_{bad})_{\nu}\,.
\end{equation}
We repeat here sketchingly the reasoning of \cite{NTV1}, \cite{NTV3}--\cite{NTV5}. In \cite{NTV1}, \cite{NTV3}--\cite{NTV5} we proved the result that the mathematical expectation of $\|f_{bad}\|_{\mu}$,
$\|g_{bad}\|_{\nu}\|$ is small if $r$ is large. 
In fact, the proof of this fact is based on the observation that the {\it conditional} probability
\begin{equation}
\label{badprobTwoWeight}
\mathbb{P}\{(\om_1,\om_2)\in \Om : I\,\,\text{is essentially bad}\,| \,I\in\mathcal{D}^{\mu}\} \leq \tau(r)\rightarrow 0,
\,\,\, r\rightarrow
\infty\,.
\end{equation}

So we consider the following result as  already proved.

\begin{thm}
\label{badprobfTwoWeight}
 We consider the  decomposition of $f$ to bad and good part, and take a bad part of it for
every
$\om=(\om_1,\om_2)\in\Om$. Let $\E$ denote the expectation with respect to $(\Om,\mathbb{P})$. Then
\begin{equation}
\label{probfTwoWeight}
\E(\|f_{bad}\|_{\mu}) \leq \e(r)\|f\|_{\mu},\,\,\,\text{where}\,\,\, \e(r)\rightarrow 0, \,\,\, r\rightarrow \infty\,.
\end{equation}
The same with $g$:
\begin{equation}
\label{probgTwoWeight}
\E(\|g_{bad}\|_{\nu}) \leq \e(r)\|g\|_{\nu},\,\,\,\text{where}\,\,\, \e(r)\rightarrow 0, \,\,\, r\rightarrow \infty\,.
\end{equation}
\end{thm}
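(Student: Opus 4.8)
The statement to be proved, Theorem \ref{badprobfTwoWeight}, is the ``standard'' probabilistic good/bad decomposition estimate of Nazarov--Treil--Volberg adapted to the present nonhomogeneous, two-lattice setting. The strategy is to reduce the bound on $\E(\|f_{bad}\|_\mu)$ to the pointwise-in-cube conditional probability estimate \eqref{badprobTwoWeight}, and then to verify that this conditional probability does indeed go to zero as $r\to\infty$. The key structural fact making this work is that the lattice $\mathcal{D}^\mu$ depends only on $\om_1$, while the badness of a cube $I\in\mathcal{D}^\mu$ is an event that, once $I$ is fixed as a set, depends only on $\om_2$ (through the position of $\mathcal{D}^\nu$ and hence of the cubes $J$ with $\ell(J)\ge 2^r\ell(I)$). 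Therefore, conditioning on $\om_1$ (equivalently, on the event $\{I\in\mathcal{D}^\mu\}$), the event ``$I$ is essentially bad'' has probability at most $\tau(r)$, a quantity independent of $I$ and of $\om_1$, with $\tau(r)\to 0$.

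\medskip
\noindent First I would fix $\om_1$, so that $\mathcal{D}^\mu$ and hence all the projections $\Delta_I^\mu$ and the coefficients $\Delta_I^\mu f$ are determined; note $\|f\|_\mu^2 = \|\Lambda^\mu(f)\|_\mu^2 + \sum_{I\subset I_0^\mu}\|\Delta_I^\mu f\|_\mu^2$ is then a fixed number. By orthogonality of the $\HHI$ (see \eqref{ort2}),
\begin{equation}
\label{fbadortho}
\|f_{bad}\|_\mu^2 = \sum_{I\in\mathcal{D}^\mu,\ I\subset I_0^\mu}\mathbf{1}_{\{I\ \text{ess.\ bad}\}}\,\|\Delta_I^\mu f\|_\mu^2\,.
\end{equation}
Taking expectation over $\om_2$ (with $\om_1$ still fixed) and using \eqref{badprobTwoWeight} termwise gives
\begin{equation}
\label{fbadcond}
\E_{\om_2}\bigl(\|f_{bad}\|_\mu^2\bigr) \le \tau(r)\sum_{I\subset I_0^\mu}\|\Delta_I^\mu f\|_\mu^2 \le \tau(r)\,\|f\|_\mu^2\,.
\end{equation}
Now average over $\om_1$: since the right-hand side of \eqref{fbadcond} does not depend on $\om_1$ (the sum $\sum_I\|\Delta_I^\mu f\|_\mu^2$ is bounded by $\|f\|_\mu^2$ for every choice of $\om_1$), we obtain $\E(\|f_{bad}\|_\mu^2)\le \tau(r)\|f\|_\mu^2$, and then Cauchy--Schwarz (Jensen) gives $\E(\|f_{bad}\|_\mu)\le \sqrt{\tau(r)}\,\|f\|_\mu$, so one takes $\e(r):=\sqrt{\tau(r)}$. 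The argument for $g$ is verbatim the same with the roles of $(\om_1,\mathcal{D}^\mu,\mu)$ and $(\om_2,\mathcal{D}^\nu,\nu)$ interchanged, using \eqref{ort3} in place of \eqref{ort2}.

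\medskip
\noindent The real content, and the step I expect to be the main obstacle, is the conditional probability bound \eqref{badprobTwoWeight}: that a fixed cube $I$, conditioned to lie in $\mathcal{D}^\mu$, is essentially bad with probability $\le\tau(r)\to 0$. This is a purely geometric estimate. For each scale $\ell(J)=2^k\ell(I)$ with $k\ge r$, one estimates the $\om_2$-probability that some $J\in\mathcal{D}^\nu$ of that side-length has $\dist(e(J),I)<\ell(J)^{1-\delta}\ell(I)^\delta$. Fixing $I$ and letting $J$ range, the condition $\dist(e(J),I)<\ell(J)^{1-\delta}\ell(I)^\delta$ forces the random offset $\om_2$ into a set of measure $\lesssim \ell(J)^{-\delta}\ell(I)^{\delta} = 2^{-k\delta}$ (roughly: a boundary hyperplane of a $\mathcal{D}^\nu$-cube of size $2^k\ell(I)$ comes within $2^{k(1-\delta)}\ell(I)^\delta$ of the small cube $I$ only for a proportion $\sim 2^{k(1-\delta)}\ell(I)^\delta / 2^k\ell(I) = 2^{-k\delta}$ of translations, per coordinate, times a dimensional constant). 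Summing the geometric series over $k\ge r$ yields $\tau(r)\lesssim 2^{-r\delta}\to 0$ with the specific $\delta=\eps/2(n+\eps)$ fixed in \eqref{delta}. Since the excerpt explicitly says ``we consider the following result as already proved'' and refers to \cite{NTV1}, \cite{NTV3}--\cite{NTV5}, I would cite those references for the details of \eqref{badprobTwoWeight} rather than reproduce the counting argument, and present only the reduction \eqref{fbadortho}--\eqref{fbadcond} in full.
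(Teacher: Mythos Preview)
Your proposal is correct and follows exactly the approach the paper indicates: the paper does not actually prove Theorem \ref{badprobfTwoWeight} in the text but explicitly says ``we consider the following result as already proved,'' pointing to \cite{NTV1}, \cite{NTV3}--\cite{NTV5} and noting that the proof rests on the conditional probability estimate \eqref{badprobTwoWeight}. Your reduction via orthogonality, conditioning on $\om_1$, applying \eqref{badprobTwoWeight} termwise in $\om_2$, and then Jensen's inequality is precisely the standard NTV argument behind those citations, so you are simply filling in what the paper leaves to the references.
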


\vspace{.2in}

Coming back to \eqref{threeterms} we get
$$
|(T_{\mu} f, g)_{\nu}| \leq |(T_{\mu} f_{good}, g_{good})_{\nu}| + \|T_{\mu}\| \|f_{bad}\|_{\mu}\| g_{good}\|_{\nu} +
\|T_{\mu}\|\| f\|_{\mu} \|g_{bad}\|_{\nu}\leq
$$
$$
|(T_{\mu} f_{good}, g_{good})_{\nu}| + 2C\e(r)\|f\|_{\mu}\|g\|_{\nu},
$$
where $C$ temporarily denotes 
$\|T_{\mu}\|_{L^2(\mu)\rightarrow L^2(\nu)}$ (a priori finite, see our assumption $\lambda <w <L$ above).
Choosing $r$ to be such that $C\e(r) < \frac14$, choosing $f,g$ to make$|(T_{\mu}
f, g)_{\nu}|$ to almost attain  $C\|f\|_{\mu}\|g\|_{\nu}$, and taking the mathematical expectation,  we get
$$
\frac12 C  \|f\|_{\mu}\|g\|_{\nu} \leq \mathbb{E}|(T_{\mu} f_{good}, g_{good})_{\nu}|
$$ 
for these special $f,g$. If we manage to prove that {\it for all} $f,g$ 
\begin{equation}
\label{goodgoodTwoWeight}
|(T_{\mu} f_{good}, g_{good})_{\nu}| \leq c\,(\sqrt{K} +\sqrt{K_{\chi}}) \|f\|_{\mu}\|g\|_{\nu}\,\,\,\forall f\in L^2(\mu),
\forall g\in L^2(\nu),
\end{equation}
then we obtain
$$
\|T_{\mu}\|_{L^2(\mu)\rightarrow L^2(\nu)} = c\,(\sqrt{K} +\sqrt{K_{\chi}})\,,
$$
which finishes the proof of Theorem \ref{pivotal3}. 

The rest is devoted to the proof of \eqref{goodgoodTwoWeight}.

\section{First reduction of the estimate on good functions \eqref{goodgoodTwoWeight}. The diagonal part.}
\label{diagonal}

So let lattices $\mathcal{D}^{\mu},\mathcal{D}^{\nu}$ be fixed, and let $f,g$ be two good functions with respect to these
lattices.
Boundedness on characteristic functions declared in \eqref{KKchi} obviously implies
\begin{equation}
\label{obv}
|(T_{\mu} \Delta_I^{\mu} f, \Delta_J^{\nu}g)_{\nu}| \leq \sqrt{K_{\chi}}\|\Delta_I^{\mu} f\|_{\mu}\|\Delta_J^{\nu}g\|_{\nu}\,.
\end{equation}
In fact, in the left hand side we have $2^{d}\times 2^d$ terms enumerated by sons of $I$ and sons of $J$. Let $s_I, d_J$ be two such sons,
and $\Delta_I^{\mu} f= c_s$ on $s_I$ and $\Delta_J^{\nu}g=c_d$ on $d_J$. Obviously,
$$
|c_s|\le \frac{\|\Delta_I^{\mu}(f)\|_{\mu}}{\mu(s_I)^{1/2}}\,,\,\,|c_d|\le \frac{\|\Delta_J^{\nu}(g)\|_{\nu}}{\nu(d_J)^{1/2}}\,.
$$
Then
$$
|(T_{\mu} \chi_{s_I}\Delta_I^{\mu} f, \chi_{d_J}\Delta_J^{\nu}g)_{\nu}| \leq \frac{\|\Delta_I^{\mu}(f)\|_{\mu}}{\mu(s_I)^{1/2}} \frac{\|\Delta_J^{\nu}(g)\|_{\nu}}{\nu(d_J)^{1/2}}(T_\mu\chi_{s_I}, \chi_{d_J}) \le 
$$
$$
\sqrt{K_{\chi}}\frac{\|\Delta_I^{\mu}(f)\|_{\mu}}{\mu(s_I)^{1/2}}  \frac{\|\Delta_J^{\nu}(g)\|_{\nu}}{\nu(d_J)^{1/2}}   \mu(s_I)^{1/2}\nu(d_J)^{1/2}\leq
$$
$$
\sqrt{K_{\chi}}\|\Delta_I^{\mu}(f)\|_{\mu}\|\Delta_J^{\nu}(g)\|_{\nu}\,.
 $$

Therefore, in the sum $(T_{\mu}f,g)_{\nu} = \sum_{I\in \mathcal{D}^{\mu}, J\in \mathcal{D}^{\nu}}(T_{\mu}\Delta_I^{\mu},
\Delta_J^{\nu}g)_{\nu}$ the ``diagonal" part can be easily estimated. Namely,  by \eqref{obv} (below $r$ is the number involved in the
definition of good functions in the previous section, and we always have $I\in \mathcal{D}^{\mu}, J\in \mathcal{D}^{\nu}$
without mentioning this):
\begin{equation}
\label{diagonalTwoWeight}
\sum_{ 2^{-dr}|J| \leq |I| \leq 2^{dr} |J|, \dist(I,J) \leq\max(|I|,|J|)}|(T_{\mu}
\Delta_I^{\mu} f, \Delta_J^{\nu}g)_{\nu}| \leq \sqrt{K_{\chi}}\|f\|_{\mu}\|g\|_{\nu}\,.
\end{equation}

\section{Second reduction of the estimate on good functions \eqref{goodgoodTwoWeight}. A piece of long range interaction}
\label{LongRangeTwoWeight1}

Let us consider the sums 
\begin{equation}
\label{longrangesums1}
\Sigma_1:=\sum_{ 2^{-dr}|J| \leq |I|\leq |J|, \dist(I,J) \geq\ \ell(J)}|(T_{\mu}
\Delta_I^{\mu} f, \Delta_J^{\nu}g)_{\nu}| \,.
\end{equation}
\begin{equation}
\label{longrangesums2}
\Sigma_2:=\sum_{ 2^{-dr}|I| \leq |J|\leq |I|,
\dist(I,J) \geq \ell(I)}|(T_{\mu}
\Delta_I^{\mu} f, \Delta_J^{\nu}g)_{\nu}| \,.
\end{equation}

They can be estimated in a symmetric fashion. So we will only deal with the first one.

\begin{lm}
\label{longrangelmTW}
Let $|I|\leq |J|$, $\dist (I,J) \geq \ell(J)$. Then
\begin{equation}
\label{yellow4}
|(T_{\mu}
\Delta_I^{\mu} f, \Delta_J^{\nu}g)_{\nu}|\leq A\,\frac{\ell(I)^{\eps}}{(\dist(I,J)
+\ell(I)+\ell(J))^{d+\eps}}\mu(I)^{1/2}\nu(J)^{1/2}\|\Delta_I^{\mu} f\|_{\mu}\|\Delta_J^{\nu}g\|_{\nu}\,. 
\end{equation}
\end{lm}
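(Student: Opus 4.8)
The plan is the standard ``disjoint supports plus one cancellation'' estimate of nonhomogeneous $T1$ theory. Because $\dist(I,J)\ge\ell(J)>0$, the compactly supported functions $\Delta_I^\mu f$ and $\Delta_J^\nu g$ have disjoint supports, so the kernel representation \eqref{CZ1} (applied to $T_\mu$, whose bilinear form carries the kernel $K$ against $d\mu$ in the input variable and $d\nu$ in the output variable) gives
$$
(T_\mu\Delta_I^\mu f,\Delta_J^\nu g)_\nu=\int_J\int_I K(x,y)\,\Delta_I^\mu f(y)\,\Delta_J^\nu g(x)\,d\mu(y)\,d\nu(x)\,.
$$
Since $\Delta_I^\mu f$ has zero $\mu$-mean on $I$, i.e. $\int_I\Delta_I^\mu f\,d\mu=0$, I would subtract the value of the kernel at the center $c(I)$: replacing $K(x,y)$ by $K(x,y)-K(x,c(I))$ leaves the integral unchanged.

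Next I would invoke the smoothness condition \eqref{CZ2} in the first (``input'') kernel variable. For $y\in I$ and $x\in J$ one has $|y-c(I)|\le\diam(I)\lesssim\ell(I)\le\ell(J)\le\dist(I,J)\le|x-y|$, so, provided the separation exceeds a suitable dimensional multiple of $\ell(J)$, we are in the range $|y-c(I)|\le\tfrac12|x-y|$ where \eqref{CZ2} applies, and
$$
|K(x,y)-K(x,c(I))|\le\frac{|y-c(I)|^\eps}{|x-y|^{d+\eps}}\le c\,\frac{\ell(I)^\eps}{\dist(I,J)^{d+\eps}}\le A\,\frac{\ell(I)^\eps}{(\dist(I,J)+\ell(I)+\ell(J))^{d+\eps}}\,,
$$
the last step because $\ell(I),\ell(J)\le\dist(I,J)$. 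Inserting this pointwise bound and factoring the double integral,
$$
|(T_\mu\Delta_I^\mu f,\Delta_J^\nu g)_\nu|\le A\,\frac{\ell(I)^\eps}{(\dist(I,J)+\ell(I)+\ell(J))^{d+\eps}}\Big(\int_I|\Delta_I^\mu f|\,d\mu\Big)\Big(\int_J|\Delta_J^\nu g|\,d\nu\Big)\,,
$$
and then Cauchy--Schwarz on each factor, $\int_I|\Delta_I^\mu f|\,d\mu\le\mu(I)^{1/2}\|\Delta_I^\mu f\|_\mu$ and likewise for $J$, yields exactly \eqref{yellow4}. Note that only the cancellation of the \emph{smaller} cube $I$ is used; the $\nu$-mean-zero of $\Delta_J^\nu g$ plays no role here (it is spent on the symmetric sum $\Sigma_2$ and on neighbor terms).

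The one point requiring care is the passage from the hypothesis $\dist(I,J)\ge\ell(J)$ to the regime in which \eqref{CZ2} is actually licensed: the factor $\diam(I)$ versus $\ell(I)$ costs a dimensional constant, so the genuinely separated pairs should be taken to satisfy $\dist(I,J)\ge c_d\,\ell(J)$, while the bounded number of intermediate ``shells'' $\ell(J)\le\dist(I,J)<c_d\,\ell(J)$ (for which $\dist(I,J)\simeq\ell(J)\simeq\max(\ell(I),\ell(J))$) are already absorbed into the diagonal estimate \eqref{diagonalTwoWeight}, or can be disposed of by the one-line bound coming from the size estimate $|K(x,y)|\le|x-y|^{-d}$ alone. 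Everything else is routine.
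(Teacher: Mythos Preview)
Your argument is correct and is essentially the paper's own proof: use disjoint supports to write the pairing as a double integral against $K$, exploit $\int\Delta_I^\mu f\,d\mu=0$ to subtract $K(\cdot,c(I))$, apply the smoothness bound \eqref{CZ2} to get the factor $\ell(I)^\eps/|x-y|^{d+\eps}$, replace $|x-y|$ by $\dist(I,J)+\ell(I)+\ell(J)$ using $\ell(I)\le\ell(J)\le\dist(I,J)$, and finish with Cauchy--Schwarz on the $L^1$ norms. Your closing remark about the dimensional gap between $\diam(I)$ and $\ell(I)$ (needed to legitimately invoke \eqref{CZ2}) is a detail the paper simply absorbs into the constant $A$, as is customary.
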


\begin{proof}
Let $c$ be the center of cube $I$. We use the fact that $\int \Delta_I^{\mu}f\,d\mu =0$ to write
$$
(T_{\mu}
\Delta_I^{\mu} f, \Delta_J^{\nu}g)_{\nu} = \int_I d\mu(t) \int_J d\nu(s) K(t,s)\Delta_I^{\mu} f(t)\Delta_J^{\nu}g(s)=
$$
$$
\int_I d\mu(t) \int_J d\nu(s) (K(t,s)-K(c,s))\Delta_I^{\mu} f(t)\Delta_J^{\nu}g(s)\,.
$$
Then one can easily see that
\begin{equation}
\label{kerneldifference}
|(T_{\mu}
\Delta_I^{\mu} f, \Delta_J^{\nu}g)_{\nu}| \leq A\, \int_I\int_J\frac{\ell(I)^{\eps}}{|t-s|^{d+\eps}}|\Delta_I^{\mu}
f(t)||\Delta_J^{\nu}g(s)|d\mu(t)d\nu(s)\,.
\end{equation}
Now we estimate the kernel $\frac{\ell(I)^{\eps}}{(\ell(J) +|t-s|)^{d+\eps}}\chi_I(t)\chi_J(s)\leq A\, \frac{\ell(I)^{\eps}}{(\dist(I,J)
+\ell(I)+\ell(J))^{d+\eps}}$ using that $|I|\leq |J|$, $\dist (I,J) \geq \ell(J)$. On the other hand
$$
\|\Delta_I^{\mu} f\|_{L^1(\mu)} \leq \mu(I)^{1/2} \|\Delta_I^{\mu} f\|_{\mu},\,\,
\|\Delta_J^{\nu} g\|_{L^1(\nu)} \leq \mu(J)^{1/2} \|\Delta_J^{\nu} g\|_{\nu}\,.
$$
And the lemma is proved.
\end{proof}

Let us notice that Lemma \ref{longrangelmTW} allows us to write the following estimate for the sum of \eqref{longrangesums1}
(as usual $I\in \mathcal{D}^{\mu}, J\in \mathcal{D}^{\nu}$):

\begin{equation}
\label{lr1}
\Sigma_1 \leq \sum_{n=0}^{\infty}2^{-n\eps}\sum_{I,J: \ell(I)=2^{-n}\ell(J)}\frac{\ell(J)^{\eps}}{(\dist(I,J)
+\ell(I)+\ell(J))^{d+\eps}}\mu(I)^{1/2}\nu(J)^{1/2}\|\Delta_I^{\mu} f\|_{\mu}\|\Delta_J^{\nu}g\|_{\nu}\,.
\end{equation}
Or
\begin{equation}
\label{lr2}
\Sigma_1\leq \sum_{n=0}^{\infty}2^{-n\eps}\sum_{k\in\mathbb{Z}}\sum_{I,J:
\ell(I)=2^{-n+k}, \ell(J)=2^k}\frac{2^{k\eps}}{(\dist(I,J) + 2^k)^{d+\eps}}\mu(I)^{1/2}\nu(J)^{1/2}\|\Delta_I^{\mu}
f\|_{\mu}\|\Delta_J^{\nu}g\|_{\nu}\,.
\end{equation}
To estimate ``the $n,k$" slice
$$
\Sigma_{n,k} := \sum_{I,J:
\ell(I)=2^{-n+k}, \ell(J)=2^k}\frac{2^{k\eps}}{(\dist(I,J) + 2^k)^{d+\eps}}\mu(I)^{1/2}\nu(J)^{1/2}\|\Delta_I^{\mu}
f\|_{\mu}\|\Delta_J^{\nu}g\|_{\nu}
$$ 

let us introduce the notations.
$$
\f(t) = \sum_{I\in\mathcal{D}^{\mu}, \ell(I) =2^{-n+k}}\frac{\|\Delta_I^{\mu}
f\|_{\mu}}{\mu(I)^{1/2}}\chi_I(t),\,\,\psi(s) = \sum_{J\in\mathcal{D}^{\nu}, \ell(I) =2^{k}}\frac{\|\Delta_J^{\nu}
g\|_{\nu}}{\nu(J)^{1/2}}\chi_J(s)\,.
$$
Also 
$$
K_y(t,s) := \frac{y^{\eps}}{(y + |t-s|)^{d+\eps}},\,\, y > 0, \,\, t,s \in \mathbb{R}\,.
$$
Then 
\begin{equation}
\label{lr3}
\Sigma_{n,k} \leq \int_{\mathbb{R}}d\mu(t)\int_{\mathbb{R}}d\nu(s) K_{2^k}(t,s) \f(t)\psi(s)\,.
\end{equation}

\begin{lm}
\label{PoissonTW}
The integral operator $f\rightarrow \int K_y (t,s) \f(t)\,d\mu(t)$ is bounded from $L^2(\mu)$ to $L^2(\nu)$ if
$[\mu,\nu]_{A_2}<\infty$ (recall that this quantity is equal to $\sup_{I\subset\mathbb{R}}\langle\mu\rangle_I\langle\nu\rangle_I$). Its norm is bounded by
$c\,[\mu,\nu]_{A_2}^{1/2}$.
\end{lm}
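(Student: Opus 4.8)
The plan is to prove the estimate by duality combined with a scale‑by‑scale decomposition of the kernel $K_y$, which turns the bound into a convergent sum of elementary averaging estimates, each controlled by the joint $A_2$ constant. By duality it suffices to show, for nonnegative $\f\in L^2(\mu)$ and $\psi\in L^2(\nu)$,
$$\int_{\R}\int_{\R}K_y(t,s)\,\f(t)\psi(s)\,d\mu(t)\,d\nu(s)\le c\,[\mu,\nu]_{A_2}^{1/2}\,\|\f\|_\mu\,\|\psi\|_\nu\,,$$
with $c=c(d,\eps)$ and, crucially, independent of $y>0$ (this uniformity is exactly what will be used when summing over the scales $2^k$ in \eqref{lr3}). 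Since $K_y(t,s)\asymp 2^{-m(d+\eps)}y^{-d}$ whenever $|t-s|\asymp 2^m y$, and $K_y(t,s)\asymp y^{-d}$ for $|t-s|\le y$, one has the pointwise estimate
$$K_y(t,s)\le c\sum_{j\ge 0}2^{-j\eps}\,\frac{1}{(2^jy)^d}\,\chi_{\{|t-s|\le 2^{j+1}y\}}\,.$$

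Next I would introduce, for each $j\ge 0$, the partition $\mathcal{Q}_j$ of $\R$ into cubes of side length $2^jy$: if $|t-s|\le 2^{j+1}y$ and $t\in Q\in\mathcal{Q}_j$, then both $t$ and $s$ lie in $5Q$, and the family $\{5Q:Q\in\mathcal{Q}_j\}$ has overlap bounded by a dimensional constant. Hence the bilinear form is dominated by $c\sum_{j\ge 0}2^{-j\eps}\sum_{Q\in\mathcal{Q}_j}|Q|^{-1}\big(\int_{5Q}\f\,d\mu\big)\big(\int_{5Q}\psi\,d\nu\big)$. For a single cube, Cauchy--Schwarz in $\mu$ and in $\nu$ separately gives
$$\frac{1}{|Q|}\Big(\int_{5Q}\f\,d\mu\Big)\Big(\int_{5Q}\psi\,d\nu\Big)\le\frac{\mu(5Q)^{1/2}\nu(5Q)^{1/2}}{|Q|}\,\|\f\|_{L^2(5Q,\mu)}\,\|\psi\|_{L^2(5Q,\nu)}\le c\,[\mu,\nu]_{A_2}^{1/2}\,\|\f\|_{L^2(5Q,\mu)}\,\|\psi\|_{L^2(5Q,\nu)}\,,$$
using $|5Q|\asymp|Q|$ and $\langle\mu\rangle_{5Q}\langle\nu\rangle_{5Q}\le[\mu,\nu]_{A_2}$. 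Summing over $Q\in\mathcal{Q}_j$ by Cauchy--Schwarz and invoking the bounded overlap of $\{5Q\}$ yields, for each fixed $j$, the bound $c\,[\mu,\nu]_{A_2}^{1/2}\,\|\f\|_\mu\,\|\psi\|_\nu$ with $c=c(d)$ independent of $j$ and of $y$; summing the geometric series $\sum_{j\ge 0}2^{-j\eps}$ then closes the argument with $c=c(d,\eps)$.

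There is no hard analytic obstacle here; the two points that must not be slipped are conceptual and bookkeeping. Conceptually, one should note why the joint $A_2$ condition alone suffices: for the full Poisson integral it is famously not enough (testing, Sawyer‑type conditions are needed), but the \emph{truncated} kernel $K_y$ gains the summable factor $2^{-j\eps}$ on its $j$‑th dyadic annulus, so the operator is a convergent sum of one‑scale averaging operators, and a one‑scale averaging operator is bounded $L^2(\mu)\to L^2(\nu)$ with norm $\le[\mu,\nu]_{A_2}^{1/2}$ by Cauchy--Schwarz alone. The bookkeeping point is that every constant produced along the way depends only on $d$ and $\eps$ and is independent of the scale $y$, since that independence is what makes the lemma usable in \eqref{lr2}--\eqref{lr3}.
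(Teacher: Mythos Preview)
Your proof is correct and follows essentially the same strategy as the paper: decompose $K_y$ pointwise into a geometric sum $\sum_{j\ge0}2^{-j\eps}$ of single-scale averaging kernels, bound each single-scale averaging operator $L^2(\mu)\to L^2(\nu)$ by $c\,[\mu,\nu]_{A_2}^{1/2}$ via Cauchy--Schwarz, and sum the series. The only difference is cosmetic: for the single-scale bound the paper dominates the convolution with $\chi_{[-r,r]^d}$ by an average over translated grids $A_{G(x)}$ and then bounds $A_G$ directly, whereas you use a fixed partition $\mathcal{Q}_j$ together with the $5Q$ bounded-overlap trick; both are standard and yield the same estimate with the same dependence on $[\mu,\nu]_{A_2}$ and the same uniformity in $y$.
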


Let us postpone the proof of this lemma, and let us finish the estimate of $\Sigma_1$ using it. First of all the lemma
gives the following estimate:
$$
\Sigma_{n,k} \leq c\,[\mu,\nu]_{A_2}^{1/2} \|\f\|_{\mu}\|\psi\|_{\nu} = c\,[\mu,\nu]_{A_2}^{1/2}(\sum_{I\in\mathcal{D}^{\mu},\, \ell(I) =2^{-n+k}}\|\Delta_I^{\mu}
f\|_{\mu}^2)^{1/2}(\sum_{J\in\mathcal{D}^{\nu}, \,\ell(J) =2^{k}}\|\Delta_J^{\nu}
g\|_{\nu}^2)^{1/2}\,.
$$
By Cauchy inequality
$$
\sum_k\Sigma_{n,k}\leq\sum_k (\sum_{J\in\mathcal{D}^{\nu},\, \ell(J) =2^{k}}\|\Delta_J^{\nu}
g\|_{\nu}^2)^{1/2}(\sum_{I\in\mathcal{D}^{\mu},\, \ell(I) =2^{-n+k}}\|\Delta_I^{\mu}
f\|_{\mu}^2)^{1/2} \leq 
$$
$$
(\sum_{J\in\mathcal{D}^{\nu}}\|\Delta_J^{\nu}
g\|_{\nu}^2)^{1/2}(\sum_{I\in\mathcal{D}^{\mu}}\|\Delta_I^{\mu}
f\|_{\mu}^2)^{1/2} \leq \|f\|_{\mu}\|g\|_{\nu}
$$
by \eqref{decompmu}.
Then \eqref{lr2} gives $\Sigma_1 \leq \sum_{n=0}^{\infty} 2^{-n} \sum_k \Sigma_{n,k}$, and so
$$
\Sigma_1\leq c\,[\mu,\nu]_{A_2}^{1/2}\sum_{n=0}^{\infty} 2^{-n}\|f\|_{\mu}\|g\|_{\nu} =c\,[\mu,\nu]_{A_2}^{1/2}\|f\|_{\mu}\|g\|_{\nu},
$$
and our first piece of long range interaction sum $\Sigma_1$ is finally estimated.

\vspace{.2in}

{\bf Proof of Lemma \ref{PoissonTW}}

\vspace{.2in}

Let us consider several other averaging operators. One of them is
$$
I\f(s) := \int \chi_{[-\frac12,\frac12]^d}(s-t)\f(t)\,d\mu(t)\,.
$$
Another is as follows: let $G$ be all cubes $\ell_k$, $k=(k_1,...,k_d)$ of the type $
[2k_1, 2k_1+2]\times\dots\times[2k_d, 2k_d+2]$, $k_i\in\mathbb{Z}$.
Consider
$$
A_G\f(s) := \sum_{k}\chi_{\ell_k}(s) \frac1{|\ell_k|}\int_{\ell_k}\f\,d\mu\,.
$$
Consider also shifted grid $G(x)= G+x, x\in [0,2)^d$, and corresponding $A_{G(x)}$.

Notice that
\begin{equation}
\label{IA}
I\f(s) \leq a\, \int_0^2 A_{G(x)}\f(s)\,dx\,.
\end{equation}
In fact, consider $[0,2]^d, \frac1{2^d}dx$ as an obvious probability space of all grids $G(x)$. Then 
it is easy to see that for every $s$ the unit cube $[s-\frac12, s+\frac12]^d$ is (with probability $c_d>0$) 
a subcube of one of the cubes of $G(x)$.
Then the above inequality becomes  obvious.

On the other hand, the norm of operator  $A_G$ as an operator from $L^2(\mu)$ to $L^2(\nu)$ is bounded by
$c\,[\mu,\nu]_{A_2}^{1/2}$. In fact, if  $\ell_k =
[2k_1, 2k_1+2]\times\dots\times[2k_d, 2k_d+2]$, then
$$
\|A_G\f\|^2_{\nu} \leq  \sum_k (\int_{\ell_k}|\f|\,d\mu)^2\nu(\ell_k) \leq \sum_k (\int_{\ell_k}|\f|^2\,d\mu)\nu(\ell_k)
\mu(\ell_k) \leq 
$$
$$
c\,[\mu,\nu]_{A_2}\,\sum_k \int_{\ell_k}|\f|^2\,d\mu= c\,[\mu,\nu]_{A_2}\,\|f\|_{\mu}^2\,.
$$
The same, of course, can be said about $\|A_{G(x)}\f\|^2_{\nu}$.
Then \eqref{IA} implies that the norm of averaging operator $I$ from $L^2(\mu)$ to $L^2(\nu)$ is bounded by $c\,[\mu,\nu]_{A_2}^{1/2}$.
Let us call by $I_r$ the operator of the same type as $I$, but the convolution now will be with the normalized
characteristic function of the cube $[-r,r]$:
$$
I_r\f(s) := \frac1{(2r)^d}\int \chi_{[-r,r]^d}(s-t)\f(t)\,d\mu(t)\,.
$$
It is obvious that the reasoning above can be repeated without any change and we get
\begin{equation}
\label{Ir}
\|I_r\f\|^2_{\nu} \leq c\,[\mu,\nu]_{A_2}^{1/2}\,\|f\|_{\mu}^2\,.
\end{equation}

To finish with the operator given by  $f\rightarrow \int K_y (t,s) \f(t)\,d\mu(t)$ as an operator from $L^2(\mu)$ to
$L^2(\nu)$, let us notice that (and this is a standard inequality for the Poisson-type kernels)
$$
\int K_y (t,s) |\f(t)|\,d\mu(t)\leq c\, \sum_{k=0}^{\infty}2^{-{k\eps}}(I_{y\cdot 2^k}|\f|)(s)\,.
$$
Now Lemma \ref{PoissonTW} follows immediately from \eqref{Ir} and the last inequality.

\section{ The rest of the long range
interaction}
\label{LongRangeTwoWeight2}

As always all $I$'s below are in $\mathcal{D}^{\mu}$, all $J$'s below are in $\mathcal{D}^{\nu}$.
Consider now the following two sums.

\begin{equation}
\label{longrangesums3}
\sigma_1:=\sum_{  |I|< 2^{-dr}|J|, I\cap J =\emptyset}|(T_{\mu}
\Delta_I^{\mu} f, \Delta_J^{\nu}g)_{\nu}| \,.
\end{equation}
\begin{equation}
\label{longrangesums4}
\sigma_2:=\sum_{  |J|< 2^{-dr}|I|,
I\cap J =\emptyset}|(T_{\mu}
\Delta_I^{\mu} f, \Delta_J^{\nu}g)_{\nu}| \,.
\end{equation}

They can be estimated in a symmetric fashion. So we will only deal with the first one.

Notice that $f,g$ are good functions. These means, in particular, that $I,J$, which we meet in \eqref{longrangesums3} satisfy
\begin{equation}
\label{14again}
\dist (I, \pd J) \geq \ell(J)^{1-\delta}\ell(I)^{\delta}\,,
\end{equation}
where $\delta$ was introduced in \eqref{delta}.
This is just \eqref{14} for disjoint $I,J$ with $I$ {\it not} essentially bad (see the definition at the
beginning of Subsection \ref{Badandgood}).

\begin{lm}
\label{longrangelmTW2}
Let $I,J$ be disjoint, $\ell(I)< 2^{-r}\ell(J)$,  and satisfy \eqref{14again}. Then
\begin{equation}
\label{yellow4again}
|(T_{\mu}
\Delta_I^{\mu} f, \Delta_J^{\nu}g)_{\nu}|\leq A\,\frac{\ell(I)^{\eps/2}\ell(J)^{\eps/2}}{(\dist(I,J)
+\ell(I) +\ell(J))^{d+\eps}}\mu(I)^{1/2}\nu(J)^{1/2}\|\Delta_I^{\mu} f\|_{\mu}\|\Delta_J^{\nu}g\|_{\nu}\,. 
\end{equation}
\end{lm}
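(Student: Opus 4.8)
The plan is to mimic the proof of Lemma \ref{longrangelmTW}, but this time we must exploit the cancellation of $\Delta_J^\nu g$ on the sons of $J$ in addition to (and instead of only) that of $\Delta_I^\mu f$, in order to gain the extra factor $\ell(J)^{\eps/2}$ at the price of losing $\ell(I)^{\eps/2}$. First I would write, as before,
\[
(T_{\mu}\Delta_I^{\mu} f, \Delta_J^{\nu}g)_{\nu} = \int_I d\mu(t)\int_J d\nu(s)\, K(t,s)\,\Delta_I^{\mu}f(t)\,\Delta_J^{\nu}g(s),
\]
and the main point is which variable to expand the kernel around. The difficulty compared to Lemma \ref{longrangelmTW} is that now $\ell(I)\ll \ell(J)$, so $\dist(I,J)$ can be much smaller than $\ell(J)$ — indeed $I$ may sit deep inside $J$, with $\dist(I,\partial J)$ only of the size $\ell(J)^{1-\delta}\ell(I)^\delta$ guaranteed by \eqref{14again}. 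So $c(I)$ is a legitimate center to subtract in the first variable (this uses only that $\ell(I)$ is small), giving
\[
|(T_{\mu}\Delta_I^{\mu} f, \Delta_J^{\nu}g)_{\nu}| \le A\int_I\int_J \frac{\ell(I)^{\eps}}{|t-s|^{d+\eps}}\,|\Delta_I^{\mu}f(t)|\,|\Delta_J^{\nu}g(s)|\,d\mu(t)\,d\nu(s).
\]

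Now the key step: bound the kernel by interpolating between this estimate and the trivial bound $|K(t,s)|\le |t-s|^{-d}$. Since $t\in I$, $s\in J$ and $I,J$ are disjoint, $|t-s|\ge \dist(I,J)$, and moreover $|t-s|\ge \dist(I,\partial J)\ge \ell(J)^{1-\delta}\ell(I)^\delta$ by \eqref{14again}. Taking the geometric mean of $\ell(I)^\eps/|t-s|^{d+\eps}$ and $1/|t-s|^d$ gives
\[
|K(t,s)|\,\chi_I(t)\chi_J(s) \le \frac{\ell(I)^{\eps/2}}{|t-s|^{d+\eps/2}} \le \frac{\ell(I)^{\eps/2}\,\ell(J)^{(1-\delta)\eps/2}\ell(I)^{\delta\eps/2}}{|t-s|^{d+\eps}},
\]
where in the last step I used $|t-s|^{-\eps/2}\le (\ell(J)^{1-\delta}\ell(I)^\delta)^{-\eps/2}\cdot |t-s|^{-\eps} \cdot |t-s|^{\eps/2}$... more carefully: write $|t-s|^{-d-\eps/2} = |t-s|^{-d-\eps}\cdot |t-s|^{\eps/2}$ and bound $|t-s|^{\eps/2}$ below is the wrong direction, so instead split the deficit $\eps/2$ in the exponent using the lower bound $|t-s|\ge \ell(J)^{1-\delta}\ell(I)^\delta$: thus $|t-s|^{-d-\eps/2}\le (\ell(J)^{1-\delta}\ell(I)^\delta)^{-\eps/2}|t-s|^{-d}$, and then reinserting $|t-s|^{-d}\le (\dist(I,J)+\ell(I)+\ell(J))^{-d}$ while absorbing one more factor. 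Recalling $\delta = \eps/(2(n+\eps))$ is tiny, one checks $\tfrac{\eps}{2}+\tfrac{(1-\delta)\eps}{2}(\text{exponent of }\ell(J))$ works out so that the surviving powers are at least $\ell(I)^{\eps/2}\ell(J)^{\eps/2}$ divided by $(\dist(I,J)+\ell(I)+\ell(J))^{d+\eps}$; the slack from $\delta$ being small is exactly what makes $(1-\delta)\eps/2 \ge \eps/2 - (\text{something absorbed})$ and leaves a clean $\ell(J)^{\eps/2}$.

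Finally, as in Lemma \ref{longrangelmTW}, integrate out: $\|\Delta_I^\mu f\|_{L^1(\mu)}\le \mu(I)^{1/2}\|\Delta_I^\mu f\|_\mu$ and $\|\Delta_J^\nu g\|_{L^1(\nu)}\le \nu(J)^{1/2}\|\Delta_J^\nu g\|_\nu$, and pull the (now $t,s$-independent) kernel bound out of the double integral to obtain \eqref{yellow4again}. The main obstacle is purely bookkeeping in the kernel interpolation step: one must be careful that the exponents of $\ell(I)$ and $\ell(J)$ balance to give exactly $\eps/2$ each, and this is where the specific smallness of $\delta$ chosen in \eqref{delta} is used — too large a $\delta$ would degrade the $\ell(J)$ power below $\eps/2$. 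Everything else is a routine repetition of the argument already carried out for the "first piece" of the long-range interaction.
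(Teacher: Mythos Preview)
Your plan has the right ingredients and is essentially the paper's approach, but the execution of the kernel--bound step is muddled in two places and, as written, contains an inequality in the wrong direction.

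First, the opening sentence is misleading: you do \emph{not} use the cancellation of $\Delta_J^{\nu}g$ here, only that of $\Delta_I^{\mu}f$ (exactly as in Lemma~\ref{longrangelmTW}). Indeed, subtracting the kernel in the $s$--variable would require $|s-c_J|\le \tfrac12|t-c_J|$, which fails when $\dist(I,J)\ll \ell(J)$---precisely the regime that is new here. So drop that remark; the bound
\[
|(T_{\mu}\Delta_I^{\mu} f,\Delta_J^{\nu}g)_{\nu}| \le A\int_I\int_J \frac{\ell(I)^{\eps}}{|t-s|^{d+\eps}}\,|\Delta_I^{\mu}f|\,|\Delta_J^{\nu}g|\,d\mu\,d\nu
\]
is already the correct starting point, and from here the argument is purely an estimate of the kernel $\ell(I)^{\eps}/|t-s|^{d+\eps}$.

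Second, the ``geometric mean'' detour is unnecessary and leads you to the false step $|t-s|^{-d}\le (\dist(I,J)+\ell(I)+\ell(J))^{-d}$, which goes the wrong way. The paper proceeds much more directly: first dispose of the case $\dist(I,J)\ge \ell(J)$ by citing Lemma~\ref{longrangelmTW} (which even gives the stronger numerator $\ell(I)^{\eps}$). In the remaining case $\dist(I,J)\le \ell(J)$ one has $\dist(I,J)+\ell(I)+\ell(J)\asymp \ell(J)$, so it suffices to bound the kernel by $C\,\ell(I)^{\eps/2}\ell(J)^{\eps/2}/\ell(J)^{d+\eps}$. Since $t\notin J$ and $s\in J$, the segment from $t$ to $s$ meets $\partial J$, hence $|t-s|\ge \dist(I,\partial J)\ge \ell(J)^{1-\delta}\ell(I)^{\delta}$ by \eqref{14again}. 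Substituting this \emph{directly} into $\ell(I)^{\eps}/|t-s|^{d+\eps}$ and using that $\delta$ was chosen so that $\delta(d+\eps)=\eps/2$ (this is exactly \eqref{delta}) gives
\[
\frac{\ell(I)^{\eps}}{|t-s|^{d+\eps}}\le \frac{\ell(I)^{\eps}}{\ell(J)^{(1-\delta)(d+\eps)}\ell(I)^{\delta(d+\eps)}}
=\frac{\ell(I)^{\eps/2}}{\ell(J)^{d+\eps/2}}
=\Bigl(\frac{\ell(I)}{\ell(J)}\Bigr)^{\eps/2}\frac{\ell(J)^{\eps}}{\ell(J)^{d+\eps}},
\]
and replacing $\ell(J)^{d+\eps}$ by $(\dist(I,J)+\ell(I)+\ell(J))^{d+\eps}$ costs only an absolute constant. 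The final Cauchy--Schwarz step ($\|\Delta_I^\mu f\|_{L^1(\mu)}\le \mu(I)^{1/2}\|\Delta_I^\mu f\|_\mu$, etc.) is exactly as you say.
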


\begin{proof}
If $\dist (I, J) \geq \ell(J)$, this has been already proved in Lemma \ref{longrangelmTW}.  And actually it was proved there with a better numerator: $\ell(I)^{\eps}$. So let $\dist (I, J) \leq \ell(J)$, $I,J$
being disjoint. Repeating \eqref{kerneldifference} one gets
$$
|(T_{\mu}
\Delta_I^{\mu} f, \Delta_J^{\nu}g)_{\nu}| \leq A\, \int_I\int_J\frac{\ell(I)^{\eps}}{|t-s|^{d+\eps}}|\Delta_I^{\mu}
f(t)||\Delta_J^{\nu}g(s)|d\mu(t)d\nu(s)\,.
$$
Now we estimate the kernel: $\frac{\ell(I)^{\eps}}{|t-s|^{d+\eps}}\chi_I(t)\chi_J(s)\leq A\, \frac{\ell(I)^{\eps}}{\dist(I,\pd J)^{d+\eps}}$. 
Therefore,
\begin{equation}
\label{yellow8}
|(T_{\mu}
\Delta_I^{\mu} f, \Delta_J^{\nu}g)_{\nu}| \leq A\, \frac{\ell(I)^{\eps}}{\dist(I,\pd J)^{d+\eps}}\mu(I)^{1/2}\nu(J)^{1/2}\|\Delta_I^{\mu}
f\|_{\mu}\|\Delta_J^{\nu}g\|_{\nu}\,. 
\end{equation}
We use \eqref{14again} to write
$$
\frac{\ell(I)^{\eps}}{\dist(I,\pd J)^{d+\eps}} \leq  \frac{\ell(I)^{\eps}}{\ell(I)^{\delta (d+\eps)} \ell(J)^{(1-\delta)(d+\eps)}}=
\frac{\ell(I)^{\eps-\delta (d+\eps)}\ell(J)^{\eps}}{\ell(J)^{d+\eps} \ell(J)^{\eps-\delta (d+\eps)}}=
$$
$$
\bigg(\frac{\ell(I)}{\ell(J)}\bigg)^{\eps/2}\frac{\ell(J)^{\eps}}{(\ell(I)+\dist(I,J)+\ell(J))^{d+\eps}}
$$
because we assumed $\dist (I, J) \leq \ell(J)$ and $I$ is shorter than $J$. This inequality and \eqref{yellow8} finish the proof
of the lemma.
\end{proof}

Let us notice that Lemma \ref{longrangelmTW2} allows to write the following estimate for the sum $\sigma_1$ from
\eqref{longrangesums3}:

\begin{equation}
\label{lr10}
\sigma_1 \leq \sum_{n=0}^{\infty}2^{-n/2}\sum_{I,J: \ell(I)=2^{-n}\ell(J)}\frac{\ell(J)^{\eps}}{(\dist(I,J)
+\ell(I)+\ell(J))^{d+\eps}}\mu(I)^{1/2}\nu(J)^{1/2}\|\Delta_I^{\mu} f\|_{\mu}\|\Delta_J^{\nu}g\|_{\nu}\,.
\end{equation}
Or
\begin{equation}
\label{lr20}
\sigma_1\leq \sum_{n=0}^{\infty}2^{-n\eps/2}\sum_{k\in\mathbb{Z}}\sum_{I,J:
\ell(I)=2^{-n+k}, \ell(J)=2^k}\frac{2^{k\eps}}{(\dist(I,J) + 2^k)^{d+\eps}}\mu(I)^{1/2}\nu(J)^{1/2}\|\Delta_I^{\mu}
f\|_{\mu}\|\Delta_J^{\nu}g\|_{\nu}\,.
\end{equation}
To estimate ``the $n,k$" slice
$$
\sigma_{n,k} := \sum_{I,J:
\ell(I)	=2^{-n+k}, \ell(J)=2^k}\frac{2^{k\eps}}{(\dist(I,J) + 2^k)^{d+\eps}}\mu(I)^{1/2}\nu(J)^{1/2}\|\Delta_I^{\mu}
f\|_{\mu}\|\Delta_J^{\nu}g\|_{\nu}
$$ 
let us  use again the notations
$$
\f(t) = \sum_{I\in\mathcal{D}^{\mu}, \ell(I) =2^{-n+k}}\frac{\|\Delta_I^{\mu}
f\|_{\mu}}{\mu(I)^{1/2}}\chi_I(t),\,\,\psi(s) = \sum_{J\in\mathcal{D}^{\nu}, \ell(I) =2^{k}}\frac{\|\Delta_J^{\nu}
g\|_{\nu}}{\nu(J)^{1/2}}\chi_J(s)\,.
$$
Also 
$$
K_y(t,s) := \frac{y^{\eps}}{(y + |t-s|)^{d+\eps}},\,\, y > 0, \,\, t,s \in \mathbb{R}\,.
$$
Then 
\begin{equation}
\label{lr30}
\sigma_{n,k} \leq \int_{\mathbb{R}}d\mu(t)\int_{\mathbb{R}}d\nu(s) K_{2^k}(t,s) \f(t)\psi(s)\,.
\end{equation}

Lemma \ref{PoissonTW} now gives as before the estimate of $\sigma_1$. First of all the lemma
gives the following estimate:
$$
\sigma_{n,k} \leq c\, [\mu,\nu]_{A_2}^{1/2}\, \|\f\|_{\mu}\|\psi\|_{\nu} = c\, [\mu,\nu]_{A_2}^{1/2}\, (\sum_{I\in\mathcal{D}^{\mu},\, \ell(I) =2^{-n+k}}\|\Delta_I^{\mu}
f\|_{\mu}^2)^{1/2}(\sum_{J\in\mathcal{D}^{\nu}, \,\ell(J) =2^{k}}\|\Delta_J^{\nu}
g\|_{\nu}^2)^{1/2}\,.
$$
By Cauchy inequality
$$
\sum_k\sigma_{n,k}\leq\sum_k (\sum_{J\in\mathcal{D}^{\nu},\, \ell(J) =2^{k}}\|\Delta_J^{\nu}
g\|_{\nu}^2)^{1/2}(\sum_{I\in\mathcal{D}^{\mu},\, \ell(I) =2^{-n+k}}\|\Delta_I^{\mu}
f\|_{\mu}^2)^{1/2} \leq 
$$
$$
(\sum_{J\in\mathcal{D}^{\nu}}\|\Delta_J^{\nu}
g\|_{\nu}^2)^{1/2}(\sum_{I\in\mathcal{D}^{\mu}}\|\Delta_I^{\mu}
f\|_{\mu}^2)^{1/2} \leq \|f\|_{\mu}\|g\|_{\nu}
$$
by \eqref{decompmu}.
Then \eqref{lr20} gives $\sigma_1 \leq \sum_{n=0}^{\infty} 2^{-n\eps/2} \sum_k \sigma_{n,k}$, and so
$$
\sigma_1\leq c\, [\mu,\nu]_{A_2}^{1/2}\,\sum_{n=0}^{\infty} 2^{-n\eps/2}\|f\|_{\mu}\|g\|_{\nu} =c\, [\mu,\nu]_{A_2}^{1/2}\,\|f\|_{\mu}\|g\|_{\nu},
$$
and our long range interaction sum $\sigma_1$ is finally estimated.
Symmetric estimate holds for $\sigma_2$ from \eqref{longrangesums4}.

\vspace{.2in}

\noindent{\bf Conclusion:} if $f,g$ are good, then the sum of all terms $|(T_{\mu}
\Delta_I^{\mu} f, \Delta_J^{\nu}g)_{\nu}| $ such that either $\frac{\ell(I)}{\ell(J)}\in [2^{-r}, 2^r]$ or $I\cap J=\emptyset$
has the  estimate $c\, [\mu,\nu]_{A_2}^{1/2}\,\|f\|_{\mu}\|g\|_{\nu}$.

\section{ The short range interaction. Corona decomposition.}
\label{ShortRangeI}

As always all $I$'s below are in $\mathcal{D}^{\mu}$, all $J$'s below are in $\mathcal{D}^{\nu}$.

\noindent Let us consider the sums 
\begin{equation}
\label{srsums1}
\rho :=\sum_{  |I|< 2^{-dr}|J|, I\subset J, \,, \,I\,\text{is good}}(
\Delta_I^{\mu} f,T'_{\nu} \Delta_J^{\nu}g)_{\mu} \,.
\end{equation}
\begin{equation}
\label{srsums2}
\tau :=\sum_{  |J|< 2^{-dr}|I|, J\subset I, J\in \mathcal{D}_{\nu}\,, \,J\,\text{is good}}
(T_{\mu}\Delta_I^{\mu} f, \Delta_J^{\nu}g)_{\nu} \,.
\end{equation}
They can be estimated in a symmetric fashion. So we will only deal with, say, the second one.
It is very important that unlike the sums $\Sigma_i$, $\sigma_i$, this sum does not have absolute value on 
{\it each} term.

Consider each term of $\tau$ and split it to three terms.
To do this, let $I_i$ denote the half of $I$, which contains $J$. And $I_n$ is all other sons.
Let $\hat{I}$ denote an arbitrary super cube of $I_i$ in the same lattice: $\hat{I} \in \mathcal{D}^{\mu}$.

\bigskip

We write
$$
(T_{\mu}\Delta_I^{\mu} f, \Delta_J^{\nu}g)_{\nu} = (T_{\mu}(\chi_{I_n}\Delta_I^{\mu} f), \Delta_J^{\nu}g)_{\nu} +
(T_{\mu}(\chi_{I_i}\Delta_I^{\mu} f), \Delta_J^{\nu}g)_{\nu} =
$$
$$
(T_{\mu}(\chi_{I_n}\Delta_I^{\mu} f), \Delta_J^{\nu}g)_{\nu} + \langle \Delta_I^{\mu}f\rangle_{\mu, I_i}
(T_{\mu}(\chi_{\hat{I}}),\Delta_J^{\nu}g)_{\nu} - \langle \Delta_I^{\mu}f\rangle_{\mu, I_i}
(T_{\mu}(\chi_{\hat{I}\setminus I_i}),\Delta_J^{\nu}g)_{\nu}\,.
$$
Here $\langle \Delta_I^{\mu}f\rangle_{\mu, I_i}$ is the average of $\Delta_I^{\mu}f$ with respect to $\mu$ over $I_i$,
which is the same as value of this function on $I_i$ (by construction $\Delta_I^{\mu}f$ assumes on $I$ $2^d$, one on
$I_i$, others on $I_n$).

\vspace{.2in}

\noindent{\bf Definition.} We call them as follows: the first one is ``the neighbor-term",
the second one is ``the difficult term", the third one is ``the stopping term".

\bigskip

Notice that it may happen that $\hat{I}=I_i$. Then stopping term is zero. 

\subsection{The estimate of neighbor-terms}
\label{neighborterms}

We have the same estimate as in Lemma \ref{longrangelmTW2}:
$$
|(T_{\mu}(\chi_{I_n}\Delta_I^{\mu} f), \Delta_J^{\nu}g)_{\nu}|\leq 
$$
\begin{equation}
\label{nt}
A\,\frac{\ell(I)^{\eps/2}\ell(J)^{\eps/2}}{(\dist(I,J)
+\ell(I)+\ell(J)^{d+\eps}}\mu(I)^{1/2}\nu(J)^{1/2}\|\chi_{I_n}\Delta_I^{\mu} f\|_{\mu}\|\Delta_J^{\nu}g\|_{\nu}\,. 
\end{equation}
Of course, $\|\chi_{I_n}\Delta_I^{\mu} f\|_{\mu}\leq \|\Delta_I^{\mu} f\|_{\mu}$. So the estimate of the sum of absolute
values of neighbor-terms is exactly the same as the estimate of $\sigma_1$ in the preceding section.

\subsection{The estimate of stopping terms}
\label{stoppingterms}

We want to estimate
$$
|\langle \Delta_I^{\mu}f\rangle_{\mu, I_i}|
|(T_{\mu}(\chi_{\hat{I}\setminus I}),\Delta_J^{\nu}g)_{\nu}|\,.
$$
First of all, obviously
$$
|\langle \Delta_I^{\mu}f\rangle_{\mu, I_i}| \leq \frac{\|\Delta_I^{\mu} f\|_{\mu}}{\mu(I_i)^{1/2}}\,.
$$
Secondly,
$$
|(T_{\mu}(\chi_{\hat{I}\setminus I}),\Delta_J^{\nu}g)_{\nu}| = |(\chi_{\hat{I}\setminus I},T'_{\nu}\Delta_J^{\nu}g)_{\mu}|\leq
$$
$$
A\, \Bigl(\int_{\hat{I}\setminus I}d\mu(x)\frac{\ell(J)^{\eps}}{\dist(x,J)^{d+\eps}}\Bigr) \|\Delta_J^{\nu}g\|_{L^1(\nu)}\,.
$$ 
This is the usual trick with subtraction of the kernel, it uses the fact that $\int \Delta_J^{\nu}g\,d\nu=0$.
We continue by denoting the center of $I_i$ by $c(I_i)$. Consider two cases:
1) $\dist (x, J) \le 10 \ell(I)$, in this case (we use that $J$ is good)
$$
\frac{\ell(J)^{\eps}}{\dist(x,J)^{d+\eps}} \le  \frac{\ell(J)^{\eps}}{\ell(J)^{\delta (d+\eps)} \ell(I)^{(1-\delta)(d+\eps)}}=
\frac{\ell(J)^{\eps-\delta (d+\eps)}\ell(I)^{\eps}}{\ell(I)^{d+\eps} \ell(I)^{\eps-\delta (d+\eps)}}\le
$$
$$
 \bigg(\frac{\ell(J}{\ell(I)}\bigg)^{\eps/2}\frac{\ell(I)^{\eps}}{\dist(x, c(I_i))^{d+\eps}}\,;
$$
 2)  $\dist (x, J) \ge 10 \ell(I)$, in this case 
$$
\frac{\ell(J)^{\eps}}{\dist(x,J)^{d+\eps}} \le c\,\bigg(\frac{\ell(J}{\ell(I)}\bigg)^{\eps}\frac{\ell(I)^{\eps}}{\dist(x, c(I_i))^{d+\eps}}\,.
$$

 We continue, using the definition above,
$$
|(T_{\mu}(\chi_{\hat{I}\setminus I}),\Delta_J^{\nu}g)_{\nu}| \leq  A\,\nu(J)^{1/2}\|\Delta_J^{\nu}g\|_{\nu}\int_{\hat{I}\setminus
I}\bigg(\frac{\ell(J)}{\ell(I)}\bigg)^{\eps/2}\frac{\ell(J)^{\eps}}{\dist(x,c(I_i))^{d+\eps}}\,d\mu(x)\leq
$$
$$
 \le A\,\nu(J)^{1/2}\|\Delta_J^{\nu}g\|_{\nu}\Bigl(\frac{\ell(J)}{\ell(I)}\Bigr)^{\eps/2} \PP_{I_i}(\chi_{\hat{I}\setminus
I} \,d\mu)\,.
$$

Thus 
\begin{equation}
\label{yellow9}
|(T_{\mu}(\chi_{\hat{I}\setminus
I}),\Delta_J^{\nu}g)_{\nu}|\leq A\,\nu(J)^{1/2}\|\Delta_J^{\nu}g\|_{\nu}\Bigl(\frac{\ell(J)}{\ell(I)}\Bigr)^{\eps/2}
\PP_{I_i}(\chi_{\hat{I}\setminus I} \,d\mu)\,.
\end{equation}

We now get the estimate of the stopping term:
\begin{equation}
\label{yellow10}
|\langle \Delta_I^{\mu}f\rangle_{\mu, I_i}|
|(T_{\mu}(\chi_{\hat{I}\setminus I}),\Delta_J^{\nu}g)_{\nu}|\leq A\,
\Bigl(\frac{\nu(J)}{\mu(I_i)}\Bigr)^{1/2}\Bigl(\frac{\ell(J)}{\ell(I)}\Bigr)^{\eps/2}\PP_{I_i}(\chi_{\hat{I}\setminus
I_i} \,d\mu)\|\Delta_J^{\nu}g\|_{\nu}\|\Delta_I^{\mu}f\|_{\mu}\,.
\end{equation}

\subsection{Pivotal property}
\label{pivotalproperty}
Let $I\in \mathcal{D}_{\mu}$. Let $\{I_{\al}\}$ be a finite family of {\it disjoint} subcubes of $I$ belonging to the same lattice. We have called (at the beginning of the paper) the following property {\it pivotal property}:

\begin{equation}
\label{PIVOTAL}
\sum_{\al} [\PP_{I_{\al}}(\chi_{I\setminus I_{\al}}d\mu)]^2\nu(I_{\al}) \leq K\,\mu(I)\,.
\end{equation}

Recall that in the case 
$$
\mu=w^{-1}dx\,,\, \nu=wdx\,,\, w\in A_2
$$
the property \eqref{PIVOTAL} is satisfied with
$$
K=c\, \natwo^2\,.
$$

\subsection{The choice of stopping cubes}
\label{stoppingchoice}

Fix a cube $\hat{I}\in \mathcal{D}^{\mu}$.
Let us call its {\it subcubes} $I\in \mathcal{D}^{\mu}$ a {\it stopping cubes} if it is the first one (by going from bigger ones to the
smaller ones by inclusion) such that 
\begin{equation}
\label{yellow11}
\Bigl[\PP_{I}(\chi_{\hat{I}\setminus I}\,d\mu)\Bigr]^2 \nu(I) \geq 100\,K\, \mu(I),\,\, i=1,2\,,
\end{equation}
where $K$ is the constant from \eqref{PIVOTAL}.

\bigskip

Here is the place, where we use the pivotal property \eqref{PIVOTAL}:

\begin{thm}
\label{yellowS}
If $\mu, \nu$ are arbitrary positive measures such that \eqref{PIVOTAL} is satisfied, then for every $\hat{I}\in
\mathcal{D}^{\mu}$
\begin{equation}
\label{yellowCarl}
\sum_{I\in \mathcal{D}^{\mu},\, I\subset \hat{I}, \,I \text{is maximal stopping}}\mu(I) \leq \frac12 \mu(\hat{I})\,,
\end{equation}
\end{thm}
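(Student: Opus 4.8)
\textbf{Proof proposal for Theorem \ref{yellowS}.}

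The plan is to run a standard stopping-time / iteration argument, using the pivotal property \eqref{PIVOTAL} exactly once at each generation and then summing a geometric series. First I would fix $\hat I\in\mathcal{D}^{\mu}$ and let $\{I_{\al}\}$ denote the maximal stopping subcubes of $\hat I$, i.e.\ the maximal $I\subset\hat I$ (maximal with respect to inclusion) satisfying \eqref{yellow11}. These cubes are automatically pairwise disjoint, and each one is a subcube of $\hat I$. Since on each $I_{\al}$ we have the strict inequality
$$
\bigl[\PP_{I_{\al}}(\chi_{\hat I\setminus I_{\al}}\,d\mu)\bigr]^2\,\nu(I_{\al})\ \geq\ 100\,K\,\mu(I_{\al})\,,
$$
summing over $\al$ gives
$$
100\,K\sum_{\al}\mu(I_{\al})\ \leq\ \sum_{\al}\bigl[\PP_{I_{\al}}(\chi_{\hat I\setminus I_{\al}}\,d\mu)\bigr]^2\,\nu(I_{\al})\ \leq\ K\,\mu(\hat I)\,,
$$
where the last inequality is the pivotal property \eqref{PIVOTAL} applied to the disjoint family $\{I_{\al}\}$ inside $\hat I$. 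Dividing by $100K$ (which is positive and finite since $K<\infty$) yields $\sum_{\al}\mu(I_{\al})\leq\frac1{100}\mu(\hat I)\leq\frac12\mu(\hat I)$, which is the claimed estimate \eqref{yellowCarl}.

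The one subtlety worth spelling out is why we may apply \eqref{PIVOTAL} — the pivotal condition is stated for a finite family of disjoint subcubes, while a priori there could be infinitely many maximal stopping cubes. I would handle this by a standard exhaustion: \eqref{PIVOTAL} holds with a constant $K$ independent of the family, so it holds for every finite subfamily of $\{I_{\al}\}$ with the same $K$, and hence passes to the (countable) limit by monotone convergence, giving $\sum_{\al}[\PP_{I_{\al}}(\chi_{\hat I\setminus I_{\al}}d\mu)]^2\nu(I_{\al})\leq K\mu(\hat I)$ for the full family. Alternatively, if one prefers to keep everything finite, one truncates the dyadic lattice at a small scale (recall we are working under $\lambda\le w\le L$, so all the relevant quantities are finite), proves the bound with the geometric constant $\tfrac1{100}$ uniformly, and removes the truncation at the end.

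I do not expect a genuine obstacle here: the entire content is the single application of the pivotal inequality, which was already established (Lemma \ref{pivotal1} in the $A_2$ case, and taken as a hypothesis in the general case of Theorem \ref{pivotal3}). The only thing to be careful about is bookkeeping — that ``maximal stopping'' cubes are disjoint, that $\chi_{\hat I\setminus I_{\al}}$ is the right truncation to match \eqref{PIVOTAL}, and that the factor $100$ in the stopping condition \eqref{yellow11} is what produces the gain from $K\mu(\hat I)$ down to $\frac12\mu(\hat I)$ (indeed it gives the better bound $\frac1{100}\mu(\hat I)$, and any constant $>2$ in place of $100$ would already suffice). This mirrors Lemma \ref{pivotal2}, where the same argument with $\tilde K=100K$ gave the factor $\tfrac12$; the present statement is simply the version of that lemma phrased for arbitrary measures satisfying the pivotal property.
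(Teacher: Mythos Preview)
Your proof is correct and follows essentially the same argument as the paper: rewrite the stopping inequality \eqref{yellow11} as $\mu(I_{\al})\le \frac{1}{100K}[\PP_{I_{\al}}(\chi_{\hat I\setminus I_{\al}}\,d\mu)]^2\nu(I_{\al})$, sum over the disjoint family of maximal stopping cubes, and apply the pivotal property \eqref{PIVOTAL} once to get $\sum_{\al}\mu(I_{\al})\le\frac{1}{100}\mu(\hat I)\le\frac12\mu(\hat I)$. Your opening sentence about iteration and geometric series is a bit of a red herring (no iteration is needed here---that comes later when building the full stopping tree), but the actual argument you give is exactly the paper's, with the added care about passing from finite to countable families that the paper leaves implicit.
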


\begin{proof}
In fact, let $\{I_{\al}\}$ be a family of maximal stopping cubes inside $\hat{I}$ according to stopping criteria just introduced in \eqref{yellow11}. Then
$$
\mu(I_{\al})\leq \frac1{100\,K} \Bigl[\PP_{I_{\al}}(\chi_{\hat{I}\setminus I_{\al}}\,d\mu)\Bigr]^2 \nu(I_{\al})\,.
$$
cubes $\{I_{\al}\}$  are disjoint subcubes of $\hat{I}$, and so \eqref{PIVOTAL} is used now:
$$
\sum_{\al} \mu(I_{\al})\leq \frac1{100\,K} \sum_{\al} \Bigl[\PP_{I_{\al}}(\chi_{\hat{I}\setminus I_{\al}}\,d\mu)\Bigr]^2 \nu(I_{\al})
\leq
$$
$$
\frac{K}{100\,K}\mu(\hat{I})\leq \frac12\mu(\hat{I})\,.
$$
\end{proof}
\noindent{\bf Definitions.} 1. For any dyadic cube $I$, $F(I)$ will denote its father. 

\noindent 2. The  tree distance between the dyadic cubes of the same lattice will be denoted by $t(I_1,I_2)$. Of course $t(I, F(I))=1$.

\noindent 3. Stopping cubes  of the same lattice  will also form a tree. We will call it $\mathcal{S}$. The tree distance inside 
$\mathcal{S}$ will be denoted by $r(S_1,S_2)$. 
Of course
\begin{equation}
\label{trivialrt}
r(S_1,S_2) \leq t(S_1,S_2)\,.
\end{equation}

\subsection{Stopping tree}
\label{stoppingtree}

In Section \ref{ShortRangeI} we introduced the sum, which we are left to estimate:

\begin{equation}
\label{srsums20}
\tau :=\sum_{  |J|< 2^{-dr}|I|, J\subset I, J\in \mathcal{D}_{\nu}, J\,\text{is good}}
(T_{\mu}\Delta_I^{\mu} f, \Delta_J^{\nu}g)_{\nu} \,.
\end{equation}

Each term of $\tau$ was decomposed into three terms.
We recall: let $I_i$ denote the son of $I$, which contains $J$. And $I_n$ is the union of other sons.
Let $\hat{I}$ denote an arbitrary supercube of $I_i$ in the same lattice: $\hat{I} \in \mathcal{D}^{\mu}$.


For a given $I\in \mathcal{D}_{\mu}$, $J\subset I, J\in \mathcal{D}_{\nu}$, $J$ good, we write down the following splitting
$$
(T_{\mu}\Delta_I^{\mu} f, \Delta_J^{\nu}g)_{\nu} = (T_{\mu}(\chi_{I_n}\Delta_I^{\mu} f), \Delta_J^{\nu}g)_{\nu} +
(T_{\mu}(\chi_{I_i}\Delta_I^{\mu} f), \Delta_J^{\nu}g)_{\nu} =
$$
\begin{equation}
\label{splittingto3terms}
(T_{\mu}(\chi_{I_n}\Delta_I^{\mu} f), \Delta_J^{\nu}g)_{\nu} + \langle \Delta_I^{\mu}f\rangle_{\mu, I_i}
(T_{\mu}(\chi_{\hat{I}}),\Delta_J^{\nu}g)_{\nu} - \langle \Delta_I^{\mu}f\rangle_{\mu, I_i}
(T_{\mu}(\chi_{\hat{I}\setminus I_i}),\Delta_J^{\nu}g)_{\nu}\,.
\end{equation}
Here $\langle \Delta_I^{\mu}f\rangle_{\mu, I_i}$ is the average of $\Delta_I^{\mu}f$ with respect to $\mu$ over $I_i$,
which is the same as value of this function on $I_i$ (by construction $\Delta_I^{\mu}f$ assumes on $I$  exactly $2^d$ values, one on
$I_i$, others on $I_n$).

We called them as follows: the first one is ``the neighbor-term",
the second one is ``the difficult term", the third one is ``the stopping term".

In what follows it is convenient to think that we consider our problem on the circle $\mathbb{T}$ rather than on the line.
We want to explain how to choose $\hat{I}$ in a stopping terms above.

\vspace{.2in}

\noindent{\bf Construction of the stopping tree $\mathcal{S}$}.
We choose first $\hat{I}=I_0$, where $I_0$ is the unit cube of the lattice $\mathcal{D}^{\mu}$ which contains the support of $f$.
The choose its maximal stopping subcubes $\{I\}$. Just use the criterion \eqref{yellow11} from Subsection
\ref{stoppingchoice}. Call each of these $I$'s by the name $\hat{S}$. In each $\hat{S}$ again find  its maximal stopping
subcubes $\{S\}$. Et cetera... . All cubes, which were thus built, we call ``stopping cubes". They have their
generation. Stopping cubes, as a rule, will be denoted by symbols with ``hats". 

To explain the choice of $\hat{I}$ in a stopping terms above we need the notations. If $R$ is a cube in $\R$ we call $Q_R$ the cube in one more dimension built on $R$ as on its base. Sometimes we call $L(R)$ its {\it upper} face.

\vspace{.2in}

\noindent{\bf Notations.} If $\hat{S}\in \mathcal{D}^{\mu}$ is a stopping cube, and $\SSS=\{S\}, S\in \mathcal{D}^{\mu}$
is a collection of its maximal stopping subcubes (we call them stopping suns of $\hat{S}$, there stopping tree distance to $\hat{S}$ is one: $r(S,\hat{S}) =1$), we call $\mathcal{O}_{\hat{S}}$ the collection of all cubes $J$
from  the lattice  $\mathcal{D}^{\nu}$, such that the top side of the cube $Q_J$ built on $J$ as on its base lies in the set 
$\Omega_{\hat{S}} := (\bar{Q}_{\hat{S}}\setminus \cup_{S\in \SSS} \bar{ Q_S})$. In particular, $\hat{S}\in \mathcal{O}_{\hat{S}}$, but its stopping suns are not in $\mathcal{O}_{\hat{S}}$.

\vspace{.2in}

The choice of $\hat{I}$ in a stopping terms above in \eqref{splittingto3terms} is as follows: let $I, J$ be as above, namely $J\subset I, J\in \mathcal{D}_{\nu}$, $J$ good, $J\subset I_i$, where $I_i$ is a son of $I$,
we choose the first (and unique) stopping cube $\hat{S}$ such that $I_i\in \mathcal{O}_{\hat{S}}$. Then we just put $\hat{I}=
\hat{S}$.

\vspace{.1in}

\noindent{\bf Definition.} Recall that the father of an cube $I$ with respect to the tree of all dyadic cubes was called $F(I)$. If $S\in\SSS$, then its father with respect to tree $\SSS$ will be always  called from now on $\hat{S}$.

\vspace{.2in}

Let us introduce the sum of absolute values of the ``stopping terms" of the sum $\tau$ above (as always $I\in
\mathcal{D}^{\mu}, J\in \mathcal{D}^{\nu}$).

$$
t:= \sum_{  |J|< 2^{-dr}|I|, J\subset I, J\in\mathcal{D}_{\nu}, J\,\text{is good}}
|\langle \Delta_I^{\mu}f\rangle_{\mu, I_i}|
|(T_{\mu}(\chi_{\hat{I}\setminus I_i}),\Delta_J^{\nu}g)_{\nu}|\,.
$$
To estimate it we can use \eqref{yellow10}.
Then (recall that $I_i$ is the half of $I$ containing $J$)
$$
t\leq A\,\mathcal{T},\,\,\, \mathcal{T}:= \sum_{  |J|< 2^{-dr}|I|, J\subset I,
\, J\,\text{is good}}
\Bigl(\frac{\nu(J)}{\mu(I_i)}\Bigr)^{1/2}\Bigl(\frac{\ell(J)}{\ell(I)}\Bigr)^{\eps/2}\PP_{I_i}(\chi_{\hat{I}\setminus
I_i} \,d\mu)\|\Delta_J^{\nu}g\|_{\nu}\|\Delta_I^{\mu}f\|_{\mu}\,. 
$$

\begin{thm}
\label{shortrangeTWthm}
Let in the sum $\mathcal{T}$ above $\hat{I}$ means the smallest stopping tree cube containing $I_i$. Then
$$
\mathcal{T} \leq c\sqrt{K} \|f\|_{\mu}\|g\|_{\nu}\,.
$$
\end{thm}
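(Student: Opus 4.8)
The plan is to estimate the sum $\mathcal{T}$ by organizing the cubes $I$ (via their halves $I_i$) and the cubes $J\subset I_i$ according to the stopping tree $\mathcal{S}$. For each stopping cube $\hat{S}$ we will group together all pairs $(I,J)$ for which $\hat I = \hat S$, i.e. those for which $I_i\in\mathcal{O}_{\hat S}$, and split $\mathcal{T}=\sum_{\hat S}\mathcal{T}_{\hat S}$. The key geometric point (already built into the construction) is that the Poisson-type factor $\PP_{I_i}(\chi_{\hat S\setminus I_i}\,d\mu)$ is, up to the stopping criterion \eqref{yellow11}, controlled by $\sqrt{K}\,(\mu(I_i)/\nu(I_i))^{1/2}$ whenever $I_i$ is one of the maximal stopping subcubes of $\hat S$, and for non-stopping $I_i$ it is controlled by the same expression with $\hat S$-average-type quantities since $I_i$ has not yet triggered the stopping; in all cases the pivotal inequality \eqref{PIVOTAL}, in the form \eqref{yellowCarl}, gives $\sum_{I_i}[\PP_{I_i}(\chi_{\hat S\setminus I_i}d\mu)]^2\nu(I_i)\le \tilde C\,K\,\mu(\hat S)$ over the relevant disjoint-ish families.

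The next step is to collapse the two sums over $I$ and over $J$. Write
$$
\mathcal{T}_{\hat S}\le A\sum_{I_i\in\mathcal{O}_{\hat S}}\frac{\|\Delta_I^{\mu}f\|_\mu}{\mu(I_i)^{1/2}}\,\PP_{I_i}(\chi_{\hat S\setminus I_i}d\mu)\sum_{\substack{J\subset I_i,\ J\text{ good}\\ |J|<2^{-dr}|I|}}\Bigl(\frac{\ell(J)}{\ell(I)}\Bigr)^{\eps/2}\nu(J)^{1/2}\|\Delta_J^{\nu}g\|_\nu .
$$
For the inner $J$-sum, use Cauchy--Schwarz in $J$: the factor $\sum_J(\ell(J)/\ell(I))^{\eps}\nu(J)$ telescopes geometrically and is dominated by $c\,\nu(I_i)$ (or better $\nu(\Omega_{\hat S}\cap I_i)$), so the inner sum is at most $c\,\nu(I_i)^{1/2}\bigl(\sum_{J\subset I_i}\|\Delta_J^\nu g\|_\nu^2\bigr)^{1/2}$. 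This produces
$$
\mathcal{T}_{\hat S}\le A\sum_{I_i\in\mathcal{O}_{\hat S}}\Bigl(\frac{\nu(I_i)}{\mu(I_i)}\Bigr)^{1/2}\PP_{I_i}(\chi_{\hat S\setminus I_i}d\mu)\,\|\Delta_I^\mu f\|_\mu\Bigl(\sum_{J\subset I_i}\|\Delta_J^\nu g\|_\nu^2\Bigr)^{1/2}.
$$
Now apply Cauchy--Schwarz once more over $I_i\in\mathcal{O}_{\hat S}$: one factor is $\sum_{I_i}\frac{\nu(I_i)}{\mu(I_i)}[\PP_{I_i}(\chi_{\hat S\setminus I_i}d\mu)]^2\|\Delta_I^\mu f\|_\mu^2$, which by the stopping criterion is at most $100K\sum_{I_i}\|\Delta_I^\mu f\|_\mu^2$ on stopping suns and handled directly off them, and the other factor is $\sum_{I_i}\sum_{J\subset I_i}\|\Delta_J^\nu g\|_\nu^2$. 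Because distinct $I_i$ inside a single $\hat S$ with $\hat I=\hat S$ have bounded overlap in the relevant range (the cubes $I_i$ giving a fixed $J$ its label $\hat S$ are controlled), the second factor is $\le C\sum_{J}\|\Delta_J^\nu g\|_\nu^2$ localized to $\Omega_{\hat S}$.

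Finally, sum over the stopping tree $\hat S$. The pivotal/Carleson bound \eqref{yellowCarl} is what makes $\sum_{\hat S}\mu(\hat S)$-type quantities behave like a Carleson measure over $\mathcal{S}$, so after the two applications of Cauchy--Schwarz and summing over $\hat S$ one obtains
$$
\mathcal{T}\le c\sqrt K\Bigl(\sum_{I}\|\Delta_I^\mu f\|_\mu^2\Bigr)^{1/2}\Bigl(\sum_J\|\Delta_J^\nu g\|_\nu^2\Bigr)^{1/2}\le c\sqrt K\,\|f\|_\mu\|g\|_\nu
$$
by \eqref{decompmu} and \eqref{decompnu}. The main obstacle I anticipate is the bookkeeping of overlaps: justifying that for fixed $\hat S$ the families $\{I_i\}$ appearing are essentially disjoint (so that \eqref{PIVOTAL} applies cleanly) and that each $J$ is charged to a bounded number of pairs — this is where the precise definition of $\mathcal{O}_{\hat S}$ and the stopping tree distance $r(S_1,S_2)$ must be used carefully, together with the goodness of $J$ (which forces $\dist(J,\pd I)\ge \ell(I)^{1-\delta}\ell(J)^\delta$ and hence keeps $J$ well inside its host), rather than the rather soft estimates used for the long-range sums.
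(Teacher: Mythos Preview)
Your approach has a genuine gap in the second Cauchy--Schwarz step, exactly at the point you yourself flag as the ``main obstacle.'' The cubes $I_i\in\mathcal{O}_{\hat S}$ are \emph{not} essentially disjoint: for a fixed stopping cube $\hat S$ they form a nested family (all dyadic cubes strictly between $\hat S$ and its stopping children). Hence in your second factor $\sum_{I_i\in\mathcal{O}_{\hat S}}\sum_{J\subset I_i}\|\Delta_J^{\nu}g\|_{\nu}^2$ a single $J$ is counted once for every $I_i$ in the tower above it, and there is no remaining decay factor to make this converge. You used up the entire geometric weight $(\ell(J)/\ell(I))^{\eps}$ in your first Cauchy--Schwarz to control $\sum_J(\ell(J)/\ell(I))^{\eps}\nu(J)\le c\,\nu(I_i)$; nothing is left for the $g$-side. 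The invocation of the pivotal condition \eqref{PIVOTAL} and the Carleson property \eqref{yellowCarl} does not help here, since those concern disjoint families of stopping cubes, not the nested family $\mathcal{O}_{\hat S}$.

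The paper's proof avoids this entirely by slicing first by scales: it sets $r_{n,k}$ to be the sub-sum with $\ell(I)=2^k$ and $\ell(J)=2^{-n+k}$. At a fixed scale the cubes $I$ \emph{are} pairwise disjoint, so both Cauchy--Schwarz applications are clean. The bound $[\PP_{I_i}(\chi_{\hat I\setminus I_i}d\mu)]^2\nu(I_i)/\mu(I_i)\le 100K$ comes directly from the \emph{failure} of the stopping criterion \eqref{yellow11} on $I_i\in\mathcal{O}_{\hat I}$ (a pointwise bound for each $I_i$, no pivotal summation needed). One then sums $\sum_k r_{n,k}\le 10\sqrt{K}\|f\|_\mu\|g\|_\nu$ by Cauchy--Schwarz in $k$, and the factor $2^{-n\eps/2}$ handles $\sum_n$. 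Your argument can be repaired by splitting the exponent, putting $(\ell(J)/\ell(I))^{\eps/2}$ on each side of the first Cauchy--Schwarz so that the $g$-factor retains geometric decay over the tower; but the scale-slicing route is simpler and is what the paper does.
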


\begin{proof}
Put
$$
r_{n,k} := \sum_{|J|< 2^{-dr}|I|, J\subset I_i, \ell(I)= 2^{k}, \ell(J)=
2^{-n+k}}\Bigl(\frac{\nu(J)}{\mu(I_i)}\Bigr)^{1/2}\PP_{I_i}(\chi_{\hat{I}\setminus I_i}
\,d\mu)\|\Delta_J^{\nu}g\|_{\nu}\|\Delta_I^{\mu}f\|_{\mu}\,.
$$
Then abusing slightly the notations we denote the sons of $I$ by $I_1, I_2, \dots$.  We get
$$
r_{n,k} \leq\sum_{i=1}^{2^d} \sum_{\ell(I)= 2^{k}}\|\Delta_I^{\mu}f\|_{\mu}\sum_{J\subset I_i, 
\,\ell(J)=2^{-n+k}}\Bigl(\frac{\nu(J)}{\mu(I_i)}\Bigr)^{1/2}\PP_{I_i}(\chi_{\hat{I}\setminus I_i}
\,d\mu)\|\Delta_J^{\nu}g\|_{\nu}\,.
$$
Consider only $I_1$. By the Cauchy inequality the estimate will be
$$
\sum_{\ell(I)= 2^{k}}\|\Delta_I^{\mu}f\|_{\mu}(\sum_{J\subset I_1, 
\,\ell(J)=2^{-n+k}}\Bigl(\frac{\nu(J)}{\mu(I_1)}\Bigr)[\PP_{I_1}(\chi_{\hat{I}\setminus I_1}
\,d\mu)]^2)^{1/2}(\sum_{J\subset I_1, 
\,\ell(J)=2^{-n+k}}\|\Delta_J^{\nu}g\|_{\nu}^2)^{1/2}
$$
The middle term is bounded by $[\PP_{I_1}(\chi_{\hat{I}\setminus I_1}\,d\mu)]^2\nu(I_1)/\mu(I_1)$. By
\eqref{yellow11}
we get that the middle term is bounded by $\sqrt{100\,K}$. In fact, this was our choice of $\hat{I}$,
which ensures that $I_1\in \mathcal{O}_{\hat{I}}$, and so \eqref{yellow11} holds.


Thus, the last expression above is bounded by (this is just the Cauchy inequality)
$$
10\sqrt{K}\sum_{\ell(I)= 2^{k}}\|\Delta_I^{\mu}f\|_{\mu}(\sum_{J\subset I_1, 
\,\ell(J)=2^{-n+k}}\|\Delta_J^{\nu}g\|_{\nu}^2)^{1/2}\leq 
$$
$$
10\sqrt{K}(\sum_{\ell(I)=
 2^{k}}\|\Delta_I^{\mu}f\|_{\mu}^2)^{1/2}
(\sum_{\ell(I)= 2^{k}}\sum_{J\subset I_1, 
\,\ell(J)=2^{-n+k}}\|\Delta_J^{\nu}g\|_{\nu}^2)^{1/2}\,.
$$
As a result we get the estimate on $r_{n,k}$:
$$
r_{n,k} \leq 10\,\sqrt{K} \,(\sum_{\ell(I)=
2^{k}}\|\Delta_I^{\mu}f\|_{\mu}^2)^{1/2}(\sum_{\ell(J)=2^{-n+k}}\|\Delta_J^{\nu}g\|_{\nu}^2)^{1/2}\,.
$$
Now it is obvious from the formulae for $\mathcal{T}$ and $r_{n,k}$  that
$$
\mathcal{T}\leq \sum_n 2^{-n\eps/2}\sum_k r_{n,k}\,.
$$
But from the estimate above and the Cauchy inequality $\sum_k r_{n,k}\leq 10\,\sqrt{K}\,\|f\|_{\mu}\|g\|_{\nu}$.
So we get Theorem \ref{shortrangeTWthm}.

\end{proof}

\section{Difficult terms and several paraproducts}
\label{Paraproducts}

Let us recall $f,g$ are good functions and  that in the sum
\begin{equation}
\label{srsums200}
\tau :=\sum_{  |J|< 2^{-dr}|I|, J\subset I, J\in \mathcal{D}_{\nu}, J\,\text{is good}}
(T_{\mu}
\Delta_I^{\mu} f, \Delta_J^{\nu}g)_{\nu} \,.
\end{equation}
we consider each term of $\tau$ and split it to three terms.
To do this, let $I_i$ denote the son of $I$, which contains $J$. And $I_n$ is the union of other sons.
Let $S$ denote the smallest supercube of $I_i$ in the same lattice: $S \in \mathcal{D}^{\mu}$,  $S\in \SSS$ such that
\begin{equation}
\label{defines}
I_i\in \mathcal{O}_{S}\,,
\end{equation}
where the family of cubes $\mathcal{O}_{S}$ was introduced shortly after \eqref{srsums20}. (In other  words $S$ is the smallest stopping cube containing $I_i$.)
\vspace{.2in}

We wrote
$$
(T_{\mu}\Delta_I^{\mu} f, \Delta_J^{\nu}g)_{\nu} = (T_{\mu}(\chi_{I_n}\Delta_I^{\mu} f), \Delta_J^{\nu}g)_{\nu} +
(T_{\mu}(\chi_{I_i}\Delta_I^{\mu} f), \Delta_J^{\nu}g)_{\nu} =
$$
$$
(T_{\mu}(\chi_{I_n}\Delta_I^{\mu} f), \Delta_J^{\nu}g)_{\nu} + \langle \Delta_I^{\mu}f\rangle_{\mu, I_i}
(T_{\mu}(\chi_{S}),\Delta_J^{\nu}g)_{\nu} - \langle \Delta_I^{\mu}f\rangle_{\mu, I_i}
(T_{\mu}(\chi_{S\setminus I_i}),\Delta_J^{\nu}g)_{\nu}\,.
$$
Here $S$ is the smallest cube from the stopping tree $\SSS$ such that $I_i\in \mathcal{O}_{S}$.  Also here $\langle \Delta_I^{\mu}f\rangle_{\mu, I_i}$ is the average of $\Delta_I^{\mu}f$ with respect to $\mu$ over $I_i$,
which is the same as value of this function on $I_i$ (by construction $\Delta_I^{\mu}f$ assumes on $I$ exactly  $2^d$ values, one on each son).

\vspace{.2in}

The sum of absolute values of the first terms and the sum of absolute values of the third terms were already
bounded by $(c_0\sqrt{[\mu,\nu]_{A_2} }+c_1\sqrt{K})\|f||_{\mu}\|g\|_{\nu}$ in the preceding sections. Middle terms were called ``difficult terms", and we are
going to estimate the absolute value of the sum of all difficult terms now. This is the most difficult part of the proof.

\vspace{.2in}

Let $\{S\}_{S\in \SSS}$ denote  the family of stopping cubes of all generations.
{\it In what follows the letter $S$ is reserved for the stopping cubes.}  Recall that $\hat{S}$ also denotes the stopping cube, the father of $S$ inside the stopping tree $\SSS$.

\vspace{.2in}

\noindent{\bf Notations.} Let $S\in\SSS$ be an arbitrary stopping cube. We denote by
$P_{\mu, \mathcal{O}_S}$ the orthogonal projection in $L^2(\mu)$ onto the space generated by
$\{\Delta_I^{\mu}\}$, $I\in \mathcal{O}_S$ (we mean the images of these projector operators),  and we denote by
$\mathbb{P}_{\nu, \mathcal{O}_S}$ the orthogonal projection in $L^2(\nu)$ onto the space generated by
$\{\Delta_J^{\nu}\}$, $J\in \mathcal{O}_S$, $J$ is good (we mean the images of these projector operators). 

\vspace{.2in}

\noindent We fix $I\in \mathcal{D}^{\mu}$, it defines $S\in \SSS$ (see \eqref{defines}),  we look at terms
$$
\langle \Delta_I^{\mu}f\rangle_{\mu, I_i}
(T_{\mu}(\chi_{S}),\Delta_J^{\nu}g)_{\nu}\,.
$$

\noindent We can write each of the term  $\langle \Delta_I^{\mu}f\rangle_{\mu, I_i}
(T_{\mu}(\chi_{S}),\Delta_J^{\nu}g)_{\nu}$ with fixed $S$ and $I\in \mathcal{O}_{S}, J\in
\mathcal{O}_{S}$ as 
$$
\langle \Delta_I^{\mu}P_{\mu, \mathcal{O}_{S}}f\rangle_{\mu,
I_i}(T_{\mu}(\chi_{S}),\Delta_J^{\nu}\mathbb{P}_{\nu,
\mathcal{O}_{S}}g)_{\nu}\,.
$$

\vspace{.2in}

\noindent{\bf The definition of $\tau_{S}$ .}
We collect all of these terms with 
$I\in \mathcal{O}_{S}, I\in \mathcal{D}^{\mu}, J\in \mathcal{O}_{S}$, $J\in \mathcal{D}^{\nu}, |J| \leq 2^{-dr}
|I|$, $J$ is good. The resulting sum is called $\tau_{S}$.
(In summation below we should remember that  $f,g$ are good: so we can sum over all pertinent pairs of $I,J$ remembering
that some of $\Delta$'s are zero anyway.)

\vspace{.2in}

We first fix good $J$, then summing over such $I$'s gives (such $I$'s should contain $J$, and they form a ``tower" of nested
cubes, from the smallest one called $\ell(J)$ to the largest one equal to $S$; notice that the summing of
quantities
$\langle
\Delta_I^{\mu}\f\rangle_{\mu, I}$ over such a ``tower" results in the average over the smallest cube minus the average
over the largest cube of the ``tower", the latter one being zero in our case because the $\mu$-average over $S$ of any $\Delta_L^{\mu}(f)$ with $L\subset S$ is zero; we are dealing only with such $L$'s now, as $L$'s are in $\mathcal{O}_S$ by our definition of $\tau_S$ above).
$$
\langle P_{\mu, \mathcal{O}_{S}}f\rangle_{\mu,
\ell(J)}(\Delta_J^{\nu}T_{\mu}(\chi_{S}),\mathbb{P}_{\nu,
\mathcal{O}_{S}}g)_{\nu}\,,
$$
where $l(J)\in \mathcal{O}_{S},l(J)\in \mathcal{D}^{\mu}, \ell(l(J)) = 2^{r} \ell(J)$.

\vspace{.1in}

\subsection{First paraproduct}
\label{FirstParaproduct}

Let us introduce our first paraproduct operator
$$
\pi_{T_{\mu}\chi_{S}} \f := \sum_{I\in\mathcal{D}^{\mu}, I\in  \mathcal{O}_{S}}
\langle \f\rangle_{\mu,I}\sum_{J\in\mathcal{D}^{\nu}, J\in \mathcal{O}_{S}, J\subset I,  \ell(J)=
2^{-r}\ell(I), J\,\text{is good}}\Delta_J^{\nu}T_{\mu}(\chi_{S})\,.
$$
Then the absolute value of the sum $\tau_{S}$ above is
\begin{equation}
\label{C1}
|(\pi_{T_{\mu}\chi_{S}}P_{\mu, \mathcal{O}_{S}}f, \mathbb{P}_{\nu,
\mathcal{O}_{S}}g)_{\nu}|\leq C_1\,\|P_{\mu, \mathcal{O}_{S}}f\|_{\mu} \|\mathbb{P}_{\nu,
\mathcal{O}_{S}}g\|_{\nu}\,,
\end{equation}
where $C_1$ is the norm of $\pi_{T_{\mu}\chi_{S}}$ as an operator from $L^2(\mu)$ to $L^2(\nu)$.

\begin{thm}
\label{FirstParaproduct}
The norm of operator $\pi_{T_{\mu}\chi_{S}}$ as an operator from $L^2(\mu)$ to $L^2(\nu)$ is bounded by
$c\,(\sqrt{K}+\sqrt{K_{\chi}})$.
\end{thm}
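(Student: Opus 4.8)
The plan is to estimate the paraproduct norm by a standard duality argument reducing it to a Carleson embedding theorem adapted to the measures $\mu,\nu$. Write $\pi := \pi_{T_\mu\chi_S}$ and note that, after pairing with a test function $h\in L^2(\nu)$ of norm one, we must bound
$$
\sum_{I\in\mathcal{O}_S}\langle\f\rangle_{\mu,I}\,\Big(\sum_{\substack{J\in\mathcal{O}_S,\,J\subset I\\ \ell(J)=2^{-r}\ell(I),\,J\text{ good}}}(\Delta_J^\nu T_\mu(\chi_S),\Delta_J^\nu h)_\nu\Big)\,.
$$
For each $I$ set $a_I := \sum_{J}\|\Delta_J^\nu T_\mu(\chi_S)\|_\nu^2$, the sum running over the good $J\in\mathcal{O}_S$ with $J\subset I$ and $\ell(J)=2^{-r}\ell(I)$. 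By Cauchy--Schwarz in $J$ the inner bracket is at most $a_I^{1/2}(\sum_J\|\Delta_J^\nu h\|_\nu^2)^{1/2}$, so by Cauchy--Schwarz in $I$ the whole sum is at most
$$
\Big(\sum_{I\in\mathcal{O}_S}|\langle\f\rangle_{\mu,I}|^2\,a_I\Big)^{1/2}\Big(\sum_{I}\sum_{J}\|\Delta_J^\nu h\|_\nu^2\Big)^{1/2}\,,
$$
and the second factor is at most $\|h\|_\nu=1$ since the relevant $J$'s (over varying $I$ of a fixed generation gap $r$) are disjoint for fixed generation and, summing over generations, each good $J$ is counted once. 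So it remains to prove the Carleson-type estimate
$$
\sum_{I\in\mathcal{O}_S}|\langle\f\rangle_{\mu,I}|^2\,a_I\le c\,(K+K_\chi)\,\|\f\|_\mu^2\,.
$$

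The key step is to verify the Carleson condition for the coefficients $\{a_I\}$: for every cube $R\in\mathcal{O}_S$,
$$
\sum_{I\in\mathcal{O}_S,\ I\subset R} a_I\le c\,(K+K_\chi)\,\mu(R)\,.
$$
Once this is in hand, the weighted dyadic Carleson embedding theorem (for the measure $\mu$, in the nonhomogeneous form from \cite{NTVlost}, \cite{NTV1}--\cite{NTV5}) gives the desired bound on $\sum_I|\langle\f\rangle_{\mu,I}|^2 a_I$. To prove the Carleson condition, observe that $\sum_{I\subset R}a_I=\sum_{J}\|\Delta_J^\nu T_\mu(\chi_S)\|_\nu^2$ where $J$ runs over good cubes in $\mathcal{O}_S$ with $J\subset R$, hence this sum is at most $\|P_{\nu,R}T_\mu(\chi_S)\|_\nu^2$ where $P_{\nu,R}$ is the projection onto the span of the $\Delta_J^\nu$ with $J\subset R$; more crudely it is at most $\|T_\mu(\chi_S)\chi_R - \text{(its }\nu\text{-average over }R)\|_\nu^2$. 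Now split $\chi_S=\chi_R+\chi_{S\setminus R}$. The contribution of $\chi_R$ is controlled by the testing constant: $\|T_\mu(\chi_R)\|_\nu^2\le K_\chi\mu(R)$ directly by \eqref{Kchi}. The contribution of $\chi_{S\setminus R}$ is handled by the usual kernel-difference/pivotal estimate: on $R$ the function $T_\mu(\chi_{S\setminus R})$ oscillates by at most $\PP_R(\chi_{S\setminus R}d\mu)$ in the sense used in \eqref{yellow9}, so its $L^2(\nu,R)$ oscillation-norm is bounded by $\PP_R(\chi_{S\setminus R}d\mu)\,\nu(R)^{1/2}$, and since $R\in\mathcal{O}_S$ (so $R$ is not a stopping cube and the stopping inequality \eqref{yellow11} fails for $R$) we get $[\PP_R(\chi_{S\setminus R}d\mu)]^2\nu(R)\le 100\,K\,\mu(R)$. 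Adding the two contributions yields the Carleson condition with constant $c(K+K_\chi)$.

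The main obstacle I anticipate is bookkeeping at the level of the family $\mathcal{O}_S$ and the role of ``goodness'': one must make sure that the inner sums over $J$ with $\ell(J)=2^{-r}\ell(I)$ really are, after summing in the generation index, orthogonal/disjoint so that the final sum over all generations of $\sum_J\|\Delta_J^\nu h\|_\nu^2$ stays bounded by $\|h\|_\nu^2$, and that every $J$ appearing does lie inside some cube of $\mathcal{O}_S$ for which the failure of the stopping criterion \eqref{yellow11} is legitimately available (this is exactly why the definition of $\mathcal{O}_S$ excludes the stopping suns and why $\hat I=S$ was chosen as it was). A secondary technical point is the precise passage from the pointwise oscillation bound \eqref{yellow9}, stated there for a difference paired against $\Delta_J^\nu g$, to the square-function bound $\sum_J\|\Delta_J^\nu T_\mu(\chi_{S\setminus R})\|_\nu^2\le[\PP_R(\chi_{S\setminus R}d\mu)]^2\nu(R)$; this is routine but must be done with the constant $\tfrac{\ell(J)}{\ell(I)}$-type gain controlled exactly as in the stopping-term estimate of the previous section.
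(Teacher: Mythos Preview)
Your proposal is correct and follows essentially the same route as the paper: reduce to the Carleson embedding theorem for $\mu$ with coefficients $a_I=\sum_J\|\Delta_J^\nu T_\mu(\chi_S)\|_\nu^2$, then verify the Carleson condition by splitting $\chi_S=\chi_R+\chi_{S\setminus R}$, using the testing constant $K_\chi$ for the first piece and the failure of the stopping criterion \eqref{yellow11} (together with the estimate \eqref{yellow9} and the $(\ell(J)/\ell(R))^{\eps/2}$ gain summed over generations) for the second. The only cosmetic difference is that the paper gets $\|\pi\f\|_\nu^2\le\sum_I|\langle\f\rangle_{\mu,I}|^2 a_I$ directly from orthogonality of the $\Delta_J^\nu$, whereas you reach the same inequality via duality and two Cauchy--Schwarz steps; and the paper handles the $\chi_{S\setminus R}$ piece by pairing with a unit $\psi$ and applying \eqref{yellow9} term-by-term rather than invoking a pointwise oscillation bound, which is precisely the ``secondary technical point'' you flag.
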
 

\begin{proof}
Obviously, by orthogonality 
$$
\|\pi_{T_{\mu}\chi_{S}}\f\|_{\nu}^2 \leq \sum_{I\in\mathcal{D}^{\mu}, I\in  \mathcal{O}_{S}}
|\langle \f\rangle_{\mu,I}|^2\,a_I,
$$
where 
$$
a_I := \sum_{J\in\F(I)}\|\Delta_J^{\nu}T_{\mu}(\chi_{S})\|_{\nu}^2\,.
$$
and
$\F(I) := \{ J: J\in\mathcal{D}^{\nu}, J\in \mathcal{O}_{S}, J\subset I,  \ell(J)=
2^{-r}\ell(I), \,J\,\,\text{is good}\}$

The Carleson imbedding theorem (see \cite{G}, and in this context \cite{NTV1}) says that the boundedness of the sum
$\sum_{I\in\mathcal{D}^{\mu}, I\in  \mathcal{O}_{S}}
|\langle \f\rangle_{\mu,I}|^2\,a_I$ by $4C\, \|\f\|_{\mu}^2$ follows from the following Carleson condition
\begin{equation}
\label{CarlC11}
\forall I\in \mathcal{D}^{\mu},\,I\in  \mathcal{O}_{S}\, \sum_{\ell\in\mathcal{D}^{\mu}, \ell\in  \mathcal{O}_{S},
\ell\subset
I}a_{\ell} \leq C\,\mu(I)
\end{equation}
Of course if we put $\Psi(I)  := \{ J: J\in\mathcal{D}^{\nu}, J\in \mathcal{O}_{S}, J\subset I,  |J|\leq
2^{-dr}|I|, J\,\,\text{is good}\}$ we notice that
$$
\sum_{\ell\in\mathcal{D}^{\mu}, \ell\in  \mathcal{O}_{S}, \ell\subset I}a_{\ell} =\sum_{ J:
J\in\Psi(I)}\|\Delta_J^{\nu}T_{\mu}(\chi_{S})\|_{\nu}^2 =\| \sum_{ J:
J\in\Psi(I)}\Delta_J^{\nu}T_{\mu}(\chi_{S})\|_{\nu}^2 \,.
$$
By duality then
$$
\sum_{\ell\in\mathcal{D}^{\mu}, \ell\in  \mathcal{O}_{S}, \ell\subset I}a_{\ell} = \sup_{\psi\in L^2(\nu), \,\|\psi\|_{\nu}
=1}|
\sum_{ J:
J\in\Psi(I)}(T_{\mu}(\chi_{S}), \Delta_J^{\nu}\psi)_{\nu}|^2\leq 
$$
$$
\sup_{\psi\in L^2(\nu), \,\|\psi\|_{\nu} =1}|
\sum_{ J:
J\in\Psi(I)}(T_{\mu}(\chi_{S\setminus I}), \Delta_J^{\nu}\psi)_{\nu}|^2 + \|T_{\mu}(\chi_{ I})\|_{\nu}^2\,.
$$
So \eqref{KKchi} implies
\begin{equation}
\label{CarlC12}
\sum_{\ell\in\mathcal{D}^{\mu}, \ell\in  \mathcal{O}_{S}, \ell\subset I}a_{\ell}  \leq \sup_{\psi\in L^2(\nu),
\,\|\psi\|_{\nu} =1}|
\sum_{ J:
J\in\Psi(I)}(T_{\mu}(\chi_{S\setminus I}), \Delta_J^{\nu}\psi)_{\nu}|^2 + K_{\chi}\,\mu(I)\,.
\end{equation}

Let us consider the term $(T_{\mu}(\chi_{S\setminus I}), \Delta_J^{\nu}\psi)_{\nu}$, $J\in \Psi(I)$. Exactly this
quantity was estimated in  \eqref{yellow9}. We get

$$
|(T_{\mu}(\chi_{S\setminus I}), \Delta_J^{\nu}\psi)_{\nu}|\leq A\, \nu(J)^{1/2}
\|\Delta_J^{\nu}\psi\|_{\nu}\Bigl(\frac{\ell(J)}{\ell(I)}\Bigr)^{\eps/2} \PP_I(\chi_{S\setminus I})\,d\mu\,.
$$
So the first term in \eqref{CarlC12} is bounded by (we use the Cauchy inequality)
$$
\sum_{ J:
J\in\Psi(I)} \bigg(\frac{\ell(J)}{\ell(I)}\bigg)^{\eps}[\PP_I(\chi_{S\setminus I})\,d\mu]^2\nu(J)\leq \sum_n
2^{-n\eps}\sum_{\ell(J)=2^{-n}\ell(I),J\subset I}[\PP_I(\chi_{S\setminus I})\,d\mu]^2\nu(J) =
$$
$$
\sum_n
2^{-n\eps}[\PP_I(\chi_{S\setminus I})\,d\mu]^2\nu(I)
$$
as $\|\psi\|_{\nu} =1$. It is time to use the fact that $I\in \mathcal{O}_{S}$, 
which means that the stopping criterion
\eqref{yellow11} is not yet achieved on $I$, in other words that
$$
[\PP_I(\chi_{S\setminus I})\,d\mu]^2\nu(I)\leq 100\, K\, \mu(I)\,.
$$
Combining this with \eqref{CarlC12} we get \eqref{CarlC11}:
$$
\sum_{\ell\in\mathcal{D}^{\mu}, \ell\in  \mathcal{O}_{S}, \ell\subset I}a_{\ell}  \leq c\,(K+K_{\chi})\,\mu(I)\,.
$$
And Theorem \ref{FirstParaproduct} is proved as the norm (as we already remarked) of our paraproduct operator is the square root of $4$ times the constant in the previous inequality.

\end{proof}

Let us recall that we introduced above the definition of $\tau_{S}$, for stopping cube $S$.
We finished the estimate of 
the sum of $\tau_{S}$ over all stopping $S$ (recall that the set of all, stopping cubes was called $\SSS$):

\begin{equation}
\label{inSinS}
\sum_{S\in \SSS}\tau_{S} \leq c\,(\sqrt{K} +\sqrt{K_{\chi}}) \sum_{S\in\SSS}\|P_{\mu,
\mathcal{O}_{S}}f\|_{\mu}\|\mathbb{P}_{\nu,
\mathcal{O}_{S}}g\|_{\nu}\leq  c\,(\sqrt{K} +\sqrt{K_{\chi}})\|f\|_{\mu}\|g\|_{\nu},,
\end{equation}
the last inequality following from the orthogonality of $P_{\mu,
\mathcal{O}_{S}}f$ for different $S$ (the same for $\mathbb{P}_{\nu,
\mathcal{O}_{S}}g$) and the Cauchy inequality.

\subsection{Careful bookkeeping: two more paraproducts }
\label{Twomoreparaproducts}

\noindent{\bf Definition.}  Similarly to $\mathbb{P}_{\nu, \mathcal{O}_S}$ defined above we define $\mathbb{P}_{\nu, Q_S}$ and $\mathbb{P}_{\nu, Q_S\setminus\mathcal{O}_S}$ as projections on the sum of $\Delta_J^{\nu}$ with good $J$ such that $J$ lies in $Q_S$ and $Q_s\setminus \mathcal{O}_S$ correspondingly. 

In the previous subsection we have estimated a piece of the sum of the difficult terms
\begin{equation}
\label{rest}
\langle \Delta_I^{\mu}f\rangle_{\mu, I_i}
(T_{\mu}(\chi_{S}),\Delta_J^{\nu}g)_{\nu}\,,
\end{equation}
namely, we estimated the sum of such terms, when $I,J$ lie both in the same family
 $\mathcal{O}_{S}$, where $S\in \SSS$ (arbitrary stopping cube). Such a sum was called $\tau_S$, and we just proved in
\eqref{inSinS} that
$\sum_{S\in\SSS} \tau_S \leq c(\sqrt{K}+\sqrt{K_{\chi}})\|f\|_{\mu}\|g\|_{\nu}$.

What is left is to estimate the sum of abovementioned terms when $J\in \mathcal{O}_{S}$ and $I$ belongs to another
$\mathcal{O}_{S_1}$, where $S, S_1$ are both stopping cubes. As $I$ is larger than $J$, we have to consider the
pairs of stopping cubes, where $S$ is strictly inside $S_1$ ($S_1$ is one or more  generations higher in a stopping tree $\SSS$ than $S$).

Let us recall that $F(I)$ denote the father of $I$ inside the standard dyadic tree.
Let us fix $J$. Let $S_j\subset S_{j-1} \subset S_1=I_0$ be the whole (finite) sequence of stopping cubes
of successive generations containing $J$.  So $S_{i-1}$ is a father of $S_i$ in the stopping tree $\SSS$. Hence, it is not true that $S_{i-1}=F(S_i)$ in general!
 Notice also that $\langle \Delta_I^{\mu}f\rangle_{\mu, I_i}$
is the difference between two averages of $f$ with respect to $\mu$, one over $I_i$ and one over its father $I$. It is easy 
to some up successive differences and summing all above mentioned terms with fixed $J$ we get (omitting for brevity the common factor of the scalar product: $\Delta_J^{\nu}(g)$):
$$
(\langle f\rangle_{\mu, F(S_j)} -\langle f\rangle_{\mu, F^2(S_j)} ) T_{\mu}\chi_{S_{j-1}}+\dots + (\langle f\rangle_{\mu, S_{j-1}} -\langle f\rangle_{\mu, F(S_{j-1})} ) T_{\mu}\chi_{S_{j-1}} +
$$
$$
(\langle f\rangle_{\mu, F(S_{j-1})} -\langle f\rangle_{\mu, F^2(S_{j-1})} ) T_{\mu}\chi_{S_{j-2}}+\dots + (\langle f\rangle_{\mu, S_{j-2})} -\langle f\rangle_{\mu, F(S_{j-2})} ) T_{\mu}\chi_{S_{j-2}} +
$$
$$
\cdots +
$$
$$
(\langle f\rangle_{\mu, F(S_2)} -\langle f\rangle_{\mu, F^2(S_2)} ) T_{\mu}\chi_{S_{1}}+\dots + (\langle f\rangle_{\mu, S_{1}} -\langle f\rangle_{\mu, F(S_{1})} ) T_{\mu}\chi_{S_{1}} \,.
$$

Recall that we are working with $f$'s such that two last averages will be zero. Regrouping we obtain

$$
\langle f\rangle_{\mu, F(S_{j-1})} T_\mu\chi_{S_{j-2}\setminus S_{j-1}} +\cdots + \langle f\rangle_{\mu, F(S_{2})} T_\mu\chi_{S_{1}\setminus S_{2}} 
$$
and
$$
\sum_k \langle f\rangle_{\mu, F(S_{k})} T_\mu\chi_{S_{k-1}}= \sum_k \langle f\rangle_{\mu, F(S_{k})} T_\mu\chi_{\hat{S_{k}}}\,.
$$

Let us consider the first sum and let us now  collect all pertinent $J$'s. Because in the first sum $J\in \mathcal{O}_{S_j}$ and averages are over $S$ with indices strictly smaller than $j$ we obtain the following sum by collecting:

$$
\pi^{(1)}_{\SSS} (f,g):= \sum_{S\in \SSS} \langle f \rangle_{F(S)}(T_{\mu}\chi_{\hat{S}\setminus S}, \mathbb{P}_{\nu, Q_S\setminus\mathcal{O}_S}g)\,.
$$
Let us consider the second sum and let us now  collect all pertinent $J$'s. We get
$$
\pi^{(2)}_{\SSS} (f,g):= \sum_{S\in \SSS} \langle f \rangle_{F(S)}(T_{\mu}\chi_{\hat{S}}, \mathbb{P}_{\nu, \mathcal{O}_S}g)\,.
$$

However, there is also $\pi^{(3)}_{\SSS} (f,g)$ because so far we collected all difficult terms such that
$$
J\in \mathcal{O}_S\,,\,\, I_i\in \mathcal{O}_{S'}\,,\,\, S\subset S'\,,\, S\neq S'\,,\,\, r(S, S')\ge 1\,.
$$
But we have to collect also the difficult terms such that $J$ and $I_i$ are in the same $\mathcal{O}_S$  but $I$ is already not in it:
$$
J, I_i\in \mathcal{O}_S\,,\,\, I\in \mathcal{O}_{\hat{S}}\,,\,\, \hat{S} \,\,\text{is the stopping father of}\,\, S\,,\,\,\text{that is the terms with}\,\, I_i =S\,.
$$
This gives us terms (in the previous notations $I_i=S_j$, $I=F(S_j)$, $\hat{I}=S_j$, the last equality is just exactly how we chose $\hat{I}$ in the definition of difficult terms, these are situations when we do not have stopping terms, they vanish)
$$
(\langle f\rangle_{\mu, I_i}-\langle f\rangle_{\mu, I}) T_{\mu}\chi_{S_j}=(\langle f\rangle_{\mu,S_j}-\langle f\rangle_{\mu, F(S_j)}) T_{\mu}\chi_{S_j}\,.
$$
Collecting we obtain
$$
\pi^{(3)}_{\SSS} (f,g):=  \sum_{S\in \SSS} \langle f \rangle_{S}(T_{\mu}\chi_{S}, \mathbb{P}_{\nu, \mathcal{O}_S}g)-\sum_{S\in \SSS} \langle f \rangle_{F(S)}(T_{\mu}\chi_{S}, \mathbb{P}_{\nu, \mathcal{O}_S}g)\,.
$$

Now we can consider sum of all difficult terms $\rho=\pi^{(1)}_{\SSS}(f,g)+\pi^{(2)}_{\SSS}(f,g)+\pi^{(3)}_{\SSS}(f,g)$ by uniting the sum $-\sum_{S\in \SSS} \langle f \rangle_{F(S)}(T_{\mu}\chi_{S}, \mathbb{P}_{\nu, \mathcal{O}_S}g)$ with $\pi^{(2)}_{\SSS}(f,g)$, and then uniting the result with  $\pi^{(1)}_{\SSS}(f,g)$. The sum $\sum_{S\in \SSS} \langle f \rangle_{S}(T_{\mu}\chi_{S}, \mathbb{P}_{\nu, \mathcal{O}_S}g)$ stays alone:
$$
\rho = \sum_{s\in\SSS} \langle f\rangle_{\mu, F(S)} (T_{\mu} \chi_{\hat{S} \setminus S},\mathbb{P}_{\nu, Q_S}g )_{\nu} +
\sum_{s\in\SSS} \langle f\rangle_{\mu, S} (T_{\mu} \chi_{S}, \mathbb{P}_{\nu, \mathcal{O}_S}g)_{\nu} =: \rho_1 +\rho_2\,.
$$

\vspace{.2in}

We introduce now  two paraproducts:

$$
\pi^{\mathcal{O}} f: = \sum_{s\in\SSS} \langle f\rangle_{\mu, S} \mathbb{P}_{\nu, \mathcal{O}_S}(T_{\mu} \chi_{S})\,,
$$
$$
\pi^{Q}f : = \sum_{s\in\SSS} \langle f\rangle_{\mu, F(S)} \mathbb{P}_{\nu, Q_S}(T_{\mu} \chi_{\hat{S}\setminus S})\,.
$$
Then $\rho_1 = (\pi^{\mathcal{O}} , g)_{\nu}, \rho_2 = (\pi^{Q} , g)_{\nu}$. So to finish the proof of our Theorem
\ref{pivotal3} it is enough to prove the boundedness of these paraproducts as operators from $L^2(\mu)$ to
$L^2(\nu)$ with the estimate of norm $\leq c\,(\sqrt{K}+\sqrt{K_{\chi}})$.

To prove the boundedness of the first paraproduct let us use Theorem \ref{yellowS}. Consider the sequence
$$
\{b_S\}_{S\in\SSS},\,\, b_S := \|\mathbb{P}_{\nu, \mathcal{O}_S}(T_{\mu} \chi_{S})\|_{\nu}^2 \,.
$$
It is a Carleson sequence:
\begin{equation}
\label{CarlaS}
\forall I\in \mathcal{D}^{\mu}\,\,\sum_{S\subset I, S\in \SSS} b_S \leq c\,K_{\chi}\, \mu(I)\,.
\end{equation}
In fact, $ b_S \leq  \|T_{\mu} \chi_{S}\|_{\nu}^2\leq K_{\chi}\, \mu(S)$ by \eqref{Kchi}. Now \eqref{CarlaS} becomes clear
by Theorem \ref{yellowS}.

Notice that $\mathbb{P}_{\nu, \mathcal{O}_S}$ are mutually orthogonal projections in $L^2(\nu)$ for different $S$. 
This is just because the families $\mathcal{O}_S$ are pairwise disjoint for different $S\in \SSS$.
This is exactly what helped us to cope with $\pi^{\mathcal{O}} f$ so easily, we just used
$$
\|\pi^{\mathcal{O}} f\|_{\nu}^2= \|\sum_{s\in\SSS} \langle f\rangle_{\mu, S} \mathbb{P}_{\nu, \mathcal{O}_S}(T_{\mu} \chi_{S})\|_{\nu}^2= \sum_{S\in\SSS}| \langle f\rangle_{\mu, S}|^2\| \mathbb{P}_{\nu, \mathcal{O}_S}(T_{\mu} \chi_{S})\|_{\nu}^2=  \sum_{S\in\SSS}| \langle f\rangle_{\mu, S}|^2 a_S\,.
$$
This is where the orthogonality has been used. And we applied then the Carleson property of $\{b_S\}_{S\in\SSS}$.
We
already saw this type of paraproducts with the property of orthogonality (see
\cite{G},
\cite{NTV1}, and especially Theorem
\ref{FirstParaproduct} above). And we know  that Carleson condition
\eqref{CarlaS} is sufficient for the paraproduct operator $\pi^{\mathcal{O}}$ to be bounded with constant $2\sqrt{c\,K_{\chi}}$.

\vspace{.1in}

\noindent The second paraproduct $\pi^Q$ is a quite different story because projections $\mathbb{P}_{\nu Q_S}\,,\, S\in \SSS$ are not orthogonal.

\vspace{.2in}

\subsection{ The second paraproduct $\pi^{Q}$.}
\label{secondpp}

\vspace{.2in}

So $\|\pi^{Q} f\|_{\nu}^2$ has the diagonal part but also the out of diagonal par:
$$
\|\pi^{Q} f\|_{\nu}^2 \leq  DP + ODP\,,
$$
where
$$
DP := \sum_{S\in \SSS} |\langle f\rangle_{\mu,F(S)}|^2 \|\mathbb{P}_{\nu, Q_S} T_{\mu}(\chi_{\hat{S}\setminus S})\|_{\nu}^2\,,
$$
$$
ODP := \sum_{S, S'\in \SSS, S'\subset S, S'\neq S}|\langle f\rangle_{\mu,F(S')}||\langle f\rangle_{\mu,F(S)}| |(\mathbb{P}_{\nu,
Q_S} T_{\mu}(\chi_{\hat{S}\setminus S}), \mathbb{P}_{\nu, Q_{S'}} T_{\mu}(\chi_{\hat{S'}\setminus S'})_{\nu}| =
$$
$$
\sum_{S, S'\in \SSS, S'\subset S, S'\neq S}|\langle f\rangle_{\mu,F(S')}||\langle f\rangle_{\mu,F(S)}| |(\mathbb{P}_{\nu,
Q_{S'}} T_{\mu}(\chi_{\hat{S}\setminus S}), \mathbb{P}_{\nu, Q_{S'}} T_{\mu}(\chi_{\hat{S'}\setminus S'})_{\nu}|\,.
$$
We start with $ODP$. Recall that $r=r(S', S)$ is the generation gap between $S'$ and $S$, $S'\subset S$ in the stopping tree $\SSS$. Choose a small $\e_0$ depending on $\eps$ of \cz assumptions:
$$
ODP \leq \sum_{S, S'\in \SSS, S'\subset S, S'\neq S}|\langle f\rangle_{\mu,F(S)}|^2 \|\mathbb{P}_{\nu, Q_{S'}}
T_{\mu}(\chi_{\hat{S}\setminus S})\|_{\nu}^2\cdot (1+\e_0)^{r(S',S)} + 
$$
$$
\sum_{S, S'\in \SSS, S'\subset S, S'\neq S}
|\langle f\rangle_{\mu,F(S')}|^2 \|\mathbb{P}_{\nu, Q_{S'}}
T_{\mu}(\chi_{\hat{S'}\setminus S'})\|_{\nu}^2\cdot (1+\e_0)^{-r(S',S)}\leq
$$
$$
\sum_{S\in\SSS} |\langle f\rangle_{\mu,F(S)}|^2\sum_{j=1}^{\infty} (1+\e_0)^j \sum_{S'\in \SSS, S'\subset S, r(S',S)
=j}\|\mathbb{P}_{\nu, Q_{S'}} T_{\mu}(\chi_{\hat{S}\setminus S})\|_{\nu}^2 + C(\e_0) \sum_{S\in\SSS} |\langle
f\rangle_{\mu,F(S)}|^2 \|\mathbb{P}_{\nu, Q_{S}} T_{\mu}(\chi_{\hat{S}\setminus S})\|_{\nu}^2\,.
$$

Now we need to estimate these  sums
$$
F_j := \sum_{S\in\SSS} |\langle f\rangle_{\mu,F(S)}|^2\sum_{S'\in \SSS, S'\subset S, r(S',S)
=j}\|\mathbb{P}_{\nu, Q_{S'}} T_{\mu}(\chi_{\hat{S}\setminus S})\|_{\nu}^2, \,\, j=1,2,3,...\,,
$$
$$
F_0 := \sum_{S\in\SSS} |\langle
f\rangle_{\mu,F(S)}|^2 \|\mathbb{P}_{\nu, Q_{S}} T_{\mu}(\chi_{\hat{S}\setminus S})\|_{\nu}^2\,.
$$
By the way, $F_0=DP$.  We need to see that $F_j$ are exponentially small.

All such sums have the form of Carleson imbedding theorems. So we need to check countable number of Carleson 
conditions now.

\vspace{.2in}

\noindent{\bf Carleson condition for $F_j$}.
We introduce the sequence
$$
a_S :=  \|\mathbb{P}_{\nu, Q_{S}} T_{\mu}(\chi_{\hat{S}\setminus S})\|_{\nu}^2,\,\,S, \hat{S}\in\SSS, r(S, \hat{S})=1\,.
$$
And also
$$
a^j_S :=\sum_{S'\in\SSS, S'\subset S, r(S', S) =j} \|\mathbb{P}_{\nu, Q_{S'}} T_{\mu}(\chi_{\hat{S}\setminus S})\|_{\nu}^2,\,r(S, \hat{S})=1,\,\,\, j=1,2,3,...\,.
$$
We will need the following Lemma.

\begin{lm}
\label{projectionviaPoisson}
Let $S'\subseteq S\subset \hat{S}$ be cubes of $\mathcal{D}_{\mu}$. Let the tree distance between $S'$ and $S$ with respect to the tree $\mathcal{D}_{\mu}$ satisfy $t(S',S) \geq j,\, j= 0, 1, 2,..$. Then
$$
\|\bP_{\nu, S'} (T_{\mu}\chi_{\hat{S}\setminus S})\|_{\nu}^2 \leq C\, 2^{-j\eps/2} \nu(S')(\PP_S\chi_{\hat{S}\setminus S} d\mu)^2\,.
$$
\end{lm}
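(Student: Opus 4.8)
The plan is to estimate the single building block $\|\bP_{\nu,S'}(T_\mu\chi_{\hat S\setminus S})\|_\nu^2$ by exploiting two ingredients: the vanishing-mean property of the Haar projections $\Delta_J^\nu$ that make up $\bP_{\nu,S'}$, which lets us subtract the kernel and gain a factor $\ell(J)^\eps$; and the goodness of the cubes $J$ appearing in $\bP_{\nu,S'}$, which forces $J$ to be well inside $S'$, i.e. $\dist(J,\pd S')\geq \ell(S')^{1-\delta}\ell(J)^\delta$. The starting point is exactly the estimate already derived in \eqref{yellow9}: for a good $J\subset S'$ with $\ell(J)$ small relative to $\ell(S')$,
$$
|(T_\mu(\chi_{\hat S\setminus S}),\Delta_J^\nu\psi)_\nu|\leq A\,\nu(J)^{1/2}\|\Delta_J^\nu\psi\|_\nu\Bigl(\frac{\ell(J)}{\ell(S')}\Bigr)^{\eps/2}\PP_{S'}(\chi_{\hat S\setminus S}\,d\mu)\,,
$$
where I have replaced the role of $I$ there by $S'$ and used that $\chi_{\hat S\setminus S}$ is supported outside $S'$ (since $S'\subseteq S$). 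Here one must be a little careful: the Poisson term in \eqref{yellow9} is $\PP_{S'}(\chi_{S''\setminus S'}d\mu)$ for some supercube; one uses $\chi_{\hat S\setminus S}\leq \chi_{(\text{supercube})\setminus S'}$ together with monotonicity of $\PP$ to pass to $\PP_{S'}(\chi_{\hat S\setminus S}d\mu)$, and then one wants to further replace $\PP_{S'}$ by $\PP_S$. Since $S'\subseteq S$ and the mass $\chi_{\hat S\setminus S}d\mu$ sits entirely outside $S$, comparing the two Poisson kernels $\PP_{S'}$ and $\PP_S$ against that far-away mass only costs a bounded factor (indeed $\PP_{S'}(\chi_{\hat S\setminus S}d\mu)\leq C\,\PP_S(\chi_{\hat S\setminus S}d\mu)$ because $S'\subset S$ and all of $\hat S\setminus S$ is at distance $\gtrsim\ell(S)$ from $S'$), which is how the stated $\PP_S$ appears.

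Next I would sum over $J$. Writing $\|\bP_{\nu,S'}(T_\mu\chi_{\hat S\setminus S})\|_\nu^2=\sup_{\|\psi\|_\nu=1}|\sum_J(T_\mu\chi_{\hat S\setminus S},\Delta_J^\nu\psi)_\nu|^2$ where the sum runs over good $J\subset S'$ with $t(J,S')\geq j$ in the $\nu$-lattice sense (the tree-distance hypothesis $t(S',S)\geq j$ forces every such $J$ to have $\ell(J)\leq 2^{-j}\ell(S')$ — more precisely $\ell(J)\le 2^{-dr-\cdots}$ relative to the relevant cube; the key point is just the geometric decay $\ell(J)/\ell(S')\lesssim 2^{-j}$), I apply Cauchy--Schwarz and then organize by generations: for each scale $\ell(J)=2^{-m}\ell(S')$ with $m\geq j$, the cubes $J$ at that scale are disjoint subcubes of $S'$, so $\sum_{\ell(J)=2^{-m}\ell(S')}\nu(J)\leq\nu(S')$. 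This yields
$$
\|\bP_{\nu,S'}(T_\mu\chi_{\hat S\setminus S})\|_\nu^2\leq C\sum_{m\geq j}2^{-m\eps}\,\nu(S')\,[\PP_S(\chi_{\hat S\setminus S}d\mu)]^2\leq C\,2^{-j\eps/2}\,\nu(S')\,[\PP_S(\chi_{\hat S\setminus S}d\mu)]^2\,,
$$
after summing the geometric series (and absorbing, say, half the exponent to get the clean $2^{-j\eps/2}$). This is exactly the claimed bound.

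The main obstacle I anticipate is the bookkeeping around which cube's Poisson kernel and which cube's sidelength appear in the various inequalities — reconciling the $\ell(I)$-versus-$\ell(J)$ and $\PP_I$-versus-$\PP_S$ discrepancies between \eqref{yellow9} as stated and what is needed here. Concretely: \eqref{yellow9} was proved with $I$ playing the role of the cube whose Poisson kernel we evaluate, and with a specific supercube $\hat I$; here I need the estimate with $S'$ as the "inner" cube and $\hat S\setminus S$ as the far-away mass, which requires checking that (a) goodness of $J$ still gives the correct gain when $J$ is nested deep inside $S'$ rather than merely inside a son of some $I$, and (b) the comparison $\PP_{S'}(\chi_{\hat S\setminus S}d\mu)\leq C\PP_S(\chi_{\hat S\setminus S}d\mu)$ is legitimate, which uses crucially that $\hat S\setminus S$ is separated from $S$ (and hence from $S'\subseteq S$) by a definite fraction of $\ell(S)$, so the two Poisson kernels are comparable on that region. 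Everything else — subtracting the kernel, the $\ell(J)^\eps$ gain, disjointness of dyadic cubes at a fixed scale, summing the geometric series — is routine and parallels the stopping-term estimate in Section \ref{stoppingterms} and the first-paraproduct Carleson estimate in Section \ref{FirstParaproduct}.
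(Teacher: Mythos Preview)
Your overall architecture is right — expand the projection via \eqref{yellow9}, then sum a geometric series in generations — but the execution has a genuine gap, exactly at the point you flagged as the ``main obstacle''.

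You apply \eqref{yellow9} with $S'$ as the base cube, obtaining the factor $(\ell(J)/\ell(S'))^{\eps/2}\,\PP_{S'}(\chi_{\hat S\setminus S}\,d\mu)$, and then you try to (i) pass from $\PP_{S'}$ to $\PP_S$ at the cost of a constant, and (ii) claim that the hypothesis $t(S',S)\ge j$ forces $\ell(J)\le 2^{-j}\ell(S')$. Both steps fail. For (ii): the hypothesis says $\ell(S')\le 2^{-j}\ell(S)$, hence $\ell(J)\le\ell(S')\le 2^{-j}\ell(S)$, but it says \emph{nothing} about $\ell(J)/\ell(S')$; the largest $J$'s in $\bP_{\nu,Q_{S'}}$ have $\ell(J)\sim\ell(S')$, so your gain factor $(\ell(J)/\ell(S'))^{\eps/2}$ is $\sim 1$ and your sum starts at $m=0$ (or $m=r$), not $m=j$. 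For (i): the claim ``$\hat S\setminus S$ is at distance $\gtrsim\ell(S)$ from $S'$'' is simply false. The cube $S'$ may touch $\partial S$, and points $x\in\hat S\setminus S$ just across that boundary satisfy $|x-c(S')|\sim\ell(S')$; for such $x$ the $\PP_{S'}$-kernel is $\sim\ell(S')^{-d}$ while the $\PP_S$-kernel is only $\sim\ell(S)^{-d}$, so the pointwise (and integrated) comparison $\PP_{S'}\le C\,\PP_S$ can fail by the factor $(\ell(S)/\ell(S'))^d$.

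The paper's fix is to apply \eqref{yellow9} with $S$ itself (not $S'$) as the base cube: the mass $\chi_{\hat S\setminus S}$ sits outside $S$, and each good $J\subset S'\subseteq S$ satisfies the goodness separation from $\partial S$, so the derivation behind \eqref{yellow9} gives directly
\[
|(T_\mu(\chi_{\hat S\setminus S}),\Delta_J^\nu\psi)_\nu|\le c\,\nu(J)^{1/2}\|\Delta_J^\nu\psi\|_\nu\Bigl(\frac{\ell(J)}{\ell(S)}\Bigr)^{\eps/2}\PP_S(\chi_{\hat S\setminus S}\,d\mu),
\]
with $\ell(S)$ in both the ratio and the Poisson term, and no comparison needed. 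Now every $J\subset S'$ has $\ell(J)=2^{-t}\ell(S)$ with $t\ge j$, so summing $\sum_{t\ge j}2^{-t\eps/2}\nu(S')$ produces the $2^{-j\eps/2}$. In short: the decay $2^{-j\eps/2}$ lives in the ratio $\ell(J)/\ell(S)$, not in $\ell(J)/\ell(S')$ or in a Poisson comparison; choosing the wrong base cube throws it away.
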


\begin{proof}
Let $\|\psi\|_{\nu}=1$. Let us consider the term $(T_{\mu}(\chi_{\hat{S}\setminus S }), \Delta_J^{\nu}\psi)_{\nu}$, $J\in
Q_{S'}$. Exactly this quantity was estimated in  \eqref{yellow9}. We get

$$
|(T_{\mu}(\chi_{\hat{S}\setminus S}), \Delta_J^{\nu}\psi)_{\nu}|\leq c\, \nu(J)^{1/2}
\|\Delta_J^{\nu}\psi\|_{\nu}\Bigl(\frac{\ell(J)}{\ell(S)}\Bigr)^{\eps/2}\PP_S(\chi_{\hat{S}\setminus S}\,d\mu)\,.
$$
So each our projection can be estimated as follows
\begin{equation}
\label{starstar}
\|\mathbb{P}_{\nu, Q_{S}} T_{\mu}(\chi_{\hat{S}\setminus S})\|_{\nu}^2\leq (\PP_S(\chi_{\hat{S}\setminus S})\,d\mu)^2 \sum_{J\, \text{good}, 
J\subset S'}
\nu(J)\bigg(\frac{\ell(J)}{\ell(S)}\bigg)^{\eps/2} \,.
\end{equation}
So $\|\bP_{\nu, S'} (T_{\mu}\chi_{\hat{S}\setminus S}\|_{\nu}^2 $ is bounded by
$$
 (\PP_S(\chi_{\hat{S}\setminus S})\,d\mu)^2 \sum_{t=j}^{\infty}\sum_{\ell(J)=2^{-t}\ell(S), J\subset
S'}
\nu(J)\bigg(\frac{\ell(J)}{\ell(S)}\bigg)^{\eps/2} \,.
$$
which proves the lemma.
\end{proof}

We first establish a Carleson property for $\{a_S\}$.  
Let $I$ be in $\mathcal{D}_{\mu}$. 

We need to prove
\begin{equation}
\label{againstarallI}
\sum_{S\in\SSS, F(S)\subset  I} \|\mathbb{P}_{\nu, Q_{S}} T_{\mu}(\chi_{\hat{S}\setminus S})\|_{\nu}^2
\leq C\,K \mu(I)\,.
\end{equation}
Consider the family of our largest stopping cubes $\{S_{\al}\}_{\al \in A}$ such that $F(S_{\al})\subset I$. 
We consider their father. We call it $\hat{S}$ abusing the notations slightly.

Using our notations for father in the stopping tree $\SSS$ we can write
$$
\hat{S_{\al}} = \hat{S}\,\,\,\forall \al \in A\,.
$$
Notice that
\begin{equation}
\label{goodwithF}
 (\PP_{S_{\al}}(\chi_{\hat{S}\setminus F(S_{\al})})\,d\mu)^2\nu(F(S_{\al}))\leq 100\,K\mu(F(S_{\al}))\,\,\,\forall \al\in A\,.
 \end{equation}
 But this is not true with replacing $F(S_{\al})$ by $S_{\al}$!
 Let us use naively \eqref{goodwithF} and Lemma \ref{projectionviaPoisson}. Then we get
 $$
  \sum_{\al\in A} \|\bP_{\nu, Q_{S_{\al}}} (T_{\mu}\chi_{\hat{S}\setminus S_{\al}}\|_{\nu}^2 \leq 2  \sum_{\al\in A} \|\bP_{\nu, Q_{S_{\al}}} (T_{\mu}\chi_{\hat{S}\setminus F(S_{\al})}\|_{\nu}^2 + 2 \sum_{\al\in A}  \|\bP_{\nu, Q_{S_{\al}}} (T_{\mu}\chi_{F(S_{\al})\setminus S_{\al}}\|_{\nu}^2 \leq
$$
$$
 200\,K \sum_{\al\in A}\mu(F(S_{\al})) +2 \sum_{\al\in A}\|(T_{\mu}\chi_{F(S_{\al})}\|_{\nu}^2 \leq (200\,K +K_{\chi}) \sum_{\al\in A}\mu(F(S_{\al})) \,.
 $$
 In other words we would like to conclude that
\begin{equation}
\label{againstar}
\sum_{\al\in A} \|\mathbb{P}_{\nu, Q_{S_{\al}}} T_{\mu}(\chi_{\hat{S}\setminus S_{\al}})\|_{\nu}^2
\leq c\,(K+K_{\chi})\, \mu(I)\,.
\end{equation}
 But instead, by naive reasoning we achieved
 \begin{equation}
\label{falseagainstar}
\sum_{\al\in A} \|\mathbb{P}_{\nu, Q_{S_{\al}}} T_{\mu}(\chi_{\hat{S}\setminus S_{\al}})\|_{\nu}^2
\leq c\,(K+K_{\chi})\,\sum_{\al\in A}\mu(F(S_{\al}))\,.
\end{equation}
 This is a dangerous place because while the cubes $S_{\al}$ are pairwise disjoint, there fathers
 $F(S_{\al})$'s are usually not and we cannot deduce \eqref{againstar} from \eqref{falseagainstar}, as this is not guaranteed that
 $$
 \sum_{\al\in A}\mu(F(S_{\al})) \leq c\,\mu(I)\,.
 $$
 We cannot use doubling. This last inequality actually is usually false.
 
 However, \eqref{againstar} is true. But the way to prove it is more subtle. Let us do it.
 Let $\{F_{\beta}\}_{\beta\in B}$ denote the family of {\it maximal} cubes among $\{F(S_{\al})\}_{\al\in A}$.
 Let for a given $\beta \in B$ the family $\{S_{\beta,\gamma}\}$ denote all cubes from $\{S_{\al}\}_{\al\in A}$ that lie in $F_{\beta}$. Now
 $$
\sum_{\al\in A} \|\bP_{\nu, Q_{S_{\al}}} (T_{\mu}\chi_{\hat{S}\setminus S_{\al}})\|_{\nu}^2 =
 \sum_{\beta\in B} \sum_{\gamma}\|\bP_{\nu, Q_{S_{\beta,\gamma}}} (T_{\mu}\chi_{\hat{S}\setminus S_{\beta,\gamma}})\|^2_{\nu}\leq
$$
$$
2\sum_{\beta\in B} \sum_{\gamma}\|\bP_{\nu, Q_{S_{\beta,\gamma}}} (T_{\mu}\chi_{\hat{S}\setminus F_{\beta}})\|^2_{\nu} + 2 \sum_{\beta\in B} \sum_{\gamma}\|\bP_{\nu, Q_{S_{\beta,\gamma}}} (T_{\mu}\chi_{F_{\beta}\setminus S_{\beta,\gamma} })\|^2_{\nu}=: \Sigma_1 +\Sigma_2\,.
$$
For the second sum:
$$
\sum_{\gamma}\|\bP_{\nu, Q_{S_{\beta,\gamma}}} (T_{\mu}\chi_{F_{\beta}\setminus S_{\beta,\gamma} })\|^2_{\nu}\leq 2\sum_{\gamma}\|\bP_{\nu, Q_{S_{\beta,\gamma}}} (T_{\mu}\chi_{F_{\beta}})\|_{\nu}^2 +
$$
$$
 2\sum_{\gamma} \| T_{\mu}\chi_{S_{\beta,\gamma} }\|^2_{\nu}\leq 4\,K_{\chi}\mu(F_{\beta})
 $$
by our Sawyer's type test assumption \eqref{Kchi}. Also we can use now the disjointness of $F_{\beta}$ to conclude that 
$$
\Sigma_2 \leq 4\,K_{\chi}\,\mu(I)\,.
$$
For the first sum we use Lemma \ref{projectionviaPoisson} to conclude
$$
\Sigma_1 \leq \sum_{\beta\in B} \sum_{\gamma} (\PP_{F_{\beta}}\chi_{\hat{S}\setminus F_{\beta}}d\mu)^2\nu(S_{\beta,\gamma} )\leq 
$$
$$
 \sum_{\beta\in B}  (\PP_{F_{\beta}}\chi_{\hat{S}\setminus F_{\beta}}d\mu)^2 \nu(F_{\beta}) \leq K \sum_{\beta\in B} \mu(F_{\beta})\leq 100\,K\,\mu(I)\,.
$$

We used  here the disjointness twice.

For the second sum we can use another estimate--via $K$, not $K_{\chi}$--if we use  Lemma \ref{projectionviaPoisson} again: 
$$
\Sigma_2=\sum_{\beta\in B}\sum_{\gamma}\|\bP_{\nu, Q_{S_{\beta,\gamma}}} (T_{\mu}\chi_{F_{\beta}\setminus S_{\beta,\gamma} })\|^2_{\nu}\leq 
$$
$$
\sum_{\beta\in B}\sum_{\gamma} (\PP_{S_{\beta,\gamma}}\chi_{F_{\beta}\setminus S_{\beta,\gamma} })d\mu)^2 \nu(S_{\beta,\gamma}) \leq K \sum_{\beta\in B} \mu(F_{\beta})\le K\,\mu(I)\,.
$$
We used here \eqref{PIVOTAL}.  (We use it here for the second time in our proof, the first one was in Theorem \ref{yellowS}, notice that the first method of estimate $\Sigma_2$ does not require the use of \eqref{PIVOTAL}, but instead involves constant $K_{\chi}$, here we use only constant $K$, it may be important for something.)

Finally \eqref{againstar} is proved. But to prove the estimate of Carleson type for $\{a_S\}_{S\in\SSS}$ we need
not just \eqref{againstar} but
\begin{equation}
\label{againstarall}
\sum_{S\in\SSS, F(S)\subset  I} \|\mathbb{P}_{\nu, Q_{S}} T_{\mu}(\chi_{\hat{S}\setminus S})\|_{\nu}^2
\leq C\,K \mu(I)\,.
\end{equation}
We estimated not the whole sum in \eqref{againstarall} but only the sum over {\it maximal} $S$ such that $S\in\SSS, F(S)\subset  I$. 
In other words we estimated
\begin{equation}
\label{againstarmax}
\sum_{S_{\al}\in\SSS, F(S_{\al})\subset  I, S_{\al} \,\text{is maximal}} \|\mathbb{P}_{\nu, Q_{S_{\al}}} T_{\mu}(\chi_{\hat{S_{\al}}\setminus S_{\al}})\|_{\nu}^2
\leq C\, K\mu(I)\,.
\end{equation}
But the standard reasoning shows that \eqref{againstarmax} is enough to prove \eqref{againstarall}!
In fact, if our $S$ in the sum in \eqref{againstarall} is not maximal it is contained in a maximal one.
Denoting by $S_j(\al)$ the maximal such $S$ contained in $S_{\al}$ we conclude
$$
\sum_j  \|\mathbb{P}_{\nu, Q_{S_j(\al)}} T_{\mu}(\chi_{\widehat{S_j(\al)}\setminus S_j(\al)})\|_{\nu}^2
\leq C\,K\mu(S_{\al})\,.
$$
We sum over $j$ and $\al$ and notice that our main stopping property says
$$
\sum_{\al} \mu(S_{\al}) \leq  \mu(I)\,.
$$
This gives the sum over maximal cubes inside maximal cubes.
Next generation of stopping cubes will give a contribution $ \frac12 \mu(I)$ because 
$$
\sum_{\al}\sum_j\mu(S_j(\al))\leq \frac12\sum_{\al} \mu(S_{\al})\leq \frac12 \mu(I)\,,
$$
yet next generation will come with the contribution  $ \frac14 \mu(I)$ et cetera...
All this is because of Theorem \ref{yellowS}.
And we obtain  \eqref{againstarall}.

\noindent This gives
\begin{equation}
\label{F0est}
DP=F_0 \leq C\,K\,\|f\|_{\mu}^2\,.
\end{equation}
\noindent We are left to estimate $ODP$ or rather to give an exponentially decaying estimates of sums $F_j$.

\subsection{Miraculous improvement of the Carleson property of the sequence $\{a^j_S\}_{S\in\SSS}$}
\label{Miracle}

We used Lemma \ref{projectionviaPoisson} above. But we used it only with $j=0$.
Now we will be estimating Carleson constant for $\{a_S^j\}_{S\in\SSS}$ and it should be exponentially small.
We will use again Lemma \ref{projectionviaPoisson}  but with $j>0$. Recall that $r(S',S)$ denote the tree distance between these two cubes  {\it inside the stopping tree}.  We again consider $I\in \mathcal{D}_{\mu}$, the smallest $\hat{S}\in \SSS$ containing $I$. We need now the estimate
\begin{equation}
\label{againstarall}
\sum_{S\in\SSS, F(S)\subset  I}\sum_{S'\subset S, r(S',S) =j} \|\mathbb{P}_{\nu, Q_{S'}} T_{\mu}(\chi_{\hat{S}\setminus S})\|_{\nu}^2
\leq C\,2^{-cj}\, \mu(I)\,.
\end{equation}

We repeat verbatim the reasoning of the previous section, and of course $2^{-j\eps/2}$ appears naturally
from Lemma \ref{projectionviaPoisson}. We just use the fact that cubes $S'$ involved in $\mathbb{P}_{\nu, Q_{S'}} $  have the property
$$
t(S',S)\geq  r(S', S) \geq j\,.
$$

The only place where one should be careful to get the extra $2^{-cj}$  is the estimate of $\Sigma_2$. Now the estimate via $K_{\chi}$ will not work.
We cannot use 
$$
\sum_{\gamma}\sum_{S'\subset S_{\beta,\gamma}, r(S',S_{\beta,\gamma})=j}\|\bP_{\nu, S'} (T_{\mu}\chi_{F_{\beta}\setminus S_{\beta,\gamma} })\|^2_{\nu}\leq 
$$
$$
2\sum_{\gamma} \sum_{S'\subset S_{\beta,\gamma}, r(S',S_{\beta,\gamma})=j}\|\bP_{\nu, S'} (T_{\mu}\chi_{F_{\beta}})\|_{\nu}^2
+2\sum_{\gamma}\|
T_{\mu}\chi_{S_{\beta,\gamma} }\|^2_{\nu}\leq K_{\chi}\mu(F_{\beta})
$$ 
anymore. Actually we can use it but this does not give extra $2^{-cj}$.  Instead,  we use our second method of estimating $\Sigma_2$ by Lemma \ref{projectionviaPoisson}:
$$
\sum_{\gamma}\sum_{S'\subset S_{\beta,\gamma}, r(S',S_{\beta,\gamma})=j}\|\bP_{\nu, S'} (T_{\mu}\chi_{F_{\beta}\setminus S_{\beta,\gamma} })\|^2_{\nu}\leq 
$$
$$
C\,2^{-j\eps/2}\sum_{\gamma} \sum_{S'\subset S_{\beta,\gamma}, r(S',S_{\beta,\gamma})=j}\nu(S')
(P_{S_{\beta, \gamma}}\chi_{F_{\beta} \setminus S_{\beta,\gamma}}d\mu)^2\leq
$$
$$
C\,2^{-j\eps/2}\sum_{\gamma}\nu(S_{\beta,\gamma})(P_{S_{\beta, \gamma}}\chi_{F_{\beta} \setminus S_{\beta,\gamma}}d\mu)^2
$$
For a fixed $\beta$, the cubes $S_{\beta,\gamma}$ are disjoint by their construction (see above).
It is time to use \eqref{PIVOTAL}.  (We use it here for the third time in our proof, the first one was in Theorem \ref{yellowS}, the second time was the estimate of $\Sigma_2$ for $DP$ above via $K$.)  If we apply \eqref{PIVOTAL} to the last sum, we get
$$
\sum_{\gamma}\nu(S_{\beta,\gamma})(P_{S_{\beta, \gamma}}\chi_{F_{\beta} \setminus S_{\beta,\gamma}}d\mu)^2\leq
K\, \mu(F_{\beta})\,.
$$
Therefore,
 $$
 \sum_{\gamma}\sum_{S'\subset S_{\beta,\gamma}, r(S',S_{\beta,\gamma})=j}\|\bP_{\nu, S'} (T_{\mu}\chi_{F_{\beta}\setminus S_{\beta,\gamma} })\|^2_{\nu}\leq
c\,2^{-j\eps/2}K\,\mu(F_{\beta})\,.
$$
We already said that all terms, in particular, the  analog of the sum $\Sigma_1$ also get $2^{-j\eps/2}$ factor.
This is nice as we get
$$
\sum_{S\in\SSS, F(S)\subset I, S \,\text {is maximal}} a_S^j \leq c\,2^{-j\eps/2} \mu(I)\,.
$$
Now we again need to estimates the whole sum
\begin{equation}
\label{againstarj}
\sum_{S\in\SSS, F(S)\subset I} a_S^j \leq c\,2^{-j\eps/2} \mu(I)\,.
\end{equation}
This achieved exactly as before with the help of \eqref{yellowCarl} of Theorem \ref{yellowS}.
We consider $S_{\al}$ to be maximal $S\in\SSS, F(S)\subset I$, and then for a fixed $\al$ consider 
$S_i(\al)$ to be maximal $S\in\SSS, F(S)\subset S_{\al}$. 

Next generation of stopping cubes will give a contribution $ \frac12 2^{-j\eps/2}\mu(I)$ because 
$$
\sum_{\al}\sum_i\mu(S_i(\al))\leq \frac12\sum_{\al} \mu(S_{\al})\leq \frac12 \mu(I)\,,
$$
yet next generation will come with the contribution  $ \frac14 2^{-j\eps/2}\mu(I)$ et cetera... And we get \eqref{againstarj}.
All this is because of Theorem \ref{yellowS}.

\vspace{.1in}

The proof of Theorem \ref{pivotal3} is finished at last.

\section{The proof of Theorem \ref{Kchithe}. An estimate of $K_{\chi}$ via the weak norm}
\label{Kchith}

We need to estimate the best constant in inequalities
$$
\|T\chi_I w^{-1}\|_w^2 \le B \,w^{-1}(I)\,.
$$
and
$$
\|T'\chi_I w\|_{w^{-1}}^2 \le B \,w(I)\,.
$$
Recall the Lorentz space 
$$
L^{2,1}(w^{-1}):=\{f: \|f\|_{L^{2,1}(w^{-1})}:= \int_0^{\infty} (w^{-1}(x: |f(x)|>t))^{1/2} dt<\infty\}\,.
$$
By definition 
$$
w^{-1}(I)^{1/2} = \|\chi_I\|_{L^{2,1}(w^{-1})}\,.
$$
Therefore the inequalities above can be rewritten in a different fashion:
$$
\|T\chi_I w^{-1}\|_w \le \sqrt{B} \,\|\chi_I\|_{L^{2,1}(w^{-1})}\,.
$$
and

$$
\|T'\chi_I w\|_{w^{-1}} \le \sqrt{B} \,\|\chi_I\|_{L^{2,1}(w)}\,.
$$
The first one can be further rewritten as
$$
\|\frac{T\chi_I w^{-1}}{w^{-1}}\|_{w^{-1}} \le \sqrt{B} \,\|\chi_I\|_{L^{2,1}(w^{-1})}\,.
$$
Therefore, if we denote by $\mathcal{T}$ he operator from $L^{2,1}(w^{-1})$ to $L^2(w^{-1})$ acting by the rule
$$
\mathcal{T}f :=\frac{Tf w^{-1}}{w^{-1}}
$$
we obtain
$$
\sqrt{B}\le \|\mathcal{T}\|\,.
$$
Take a look at the adjoint operator (the duality is with respect to $\int\dots w^{-1}dx$). It is just $f\rightarrow T'f$ from $L^2(w^{-1})$ to $L^{2,\infty}(w^{-1})$.
Therefore
$$
\sqrt{B}\le \max( \|T':L^2(w^{-1})\rightarrow L^{2,\infty}(w^{-1}))\|,\, \|T:L^2(w)\rightarrow L^{2,\infty}(w)\|)
$$

Theorem \ref{Kchithe} is completely proved.

\vspace{.1in}

Combining this with Theorem \ref{pivotal3} we see that Theorem \ref{strongweak} is already proved.

\section{The proof of Theorem \ref{weakpv}. Extrapolation type approach.}
\label{weakpvsec}

This is an abstract theorem, $T$ is arbitrary here such that
\begin{equation}
\label{weak1}
\sup_{t>0}t\,W (\{x: |Tf(x)|>t)\} \le \phi([W]_{A_1})\|f\|_{L^1(W)}
\end{equation} for any $f\in L^1(W)$ and any $W\in A_1$.

We are going to prove that
\begin{equation}
\label{weak2}
\sup_{t>0}t\,w (\{x: |Tf(x)|>t\})^{1/2} \le c\, \phi([w]_{A_2})\|f\|_{L^2(w)}
\end{equation} for any $f\in L^2(w)$ and any $w\in A_2$, where $c$ is depending only on dimension.

Fix such a $w$. We can consider the case $t=1$ only, this is just homogeneity of both parts of 
\eqref{weak2}. 
Denote
$$
\Omega:= \{x: |Tf(x)|>1\}\,.
$$
We can write
\begin{equation}
\label{wOm}
w(\Omega)^{1/2} =\sup_{h\in L^2(w),\,\|h\|_{w}\le 1} |\int h w dx|\,.
\end{equation}
 Consider operator 
 $$
 S_wf:=\frac{M(fw)}{w}\,,
 $$ 
 where $M$ stands for Hardy--Littlewood maximal function. Notice that $S_wf \ge f$.
 
 By Buckley's theorem we know that
 \begin{equation}
 \label{Sw}
 \|S_w: L^2(w)\rightarrow L^2(w)\|\le c\,\natwo\,.
 \end{equation}
 Using this fact we generate
 $$
 Rh:=\sum_{k=0}^{\infty}\frac{S_w^k h}{2^k\|S^k_w\|}\,.
 $$
Then obviously $Rh\ge h$ and
$$
S_w Rh  \le 2\|S_w\| R h\le c\,\natwo\,Rh,.
$$ 

Which, by definition of $A_1$ means
\begin{equation}
\label{wRh}
wRh \in A_1\,,\,\,[wRh]_{A_1} \le  c\,\natwo\,.
\end{equation}

Call $W:=wRh$. Using \eqref{wOm} with appropriate $h$ which almost gives a supremum and using the obvious fact  $Rh\ge h$ we write
$$
w(\Omega)^{1/2} =2 \int h w dx\le \int wRh dx =W(\Omega) = W(\{x: |Tf| >1\})\,.
$$
Combine this with \eqref{weak1}:
$$
w(\Omega)^{1/2}\le W(\{x: |Tf| >1\}) \le \phi([W]_{A_1})\int |f|W dx\,.
$$
Now use the estimate of $A_1$ norm of $W=wRH$, namely, \eqref{wRh}. Then
$$
w(\Omega)^{1/2} \le \phi(c[w]_{A_2})\int |f|wRh dx\,.
$$
Hence,
$$
w(\Omega)^{1/2}\le   \phi(c[w]_{A_2})\|f\|_{w} \|Rh\|_{w}\,.
$$

But
$$
 \|Rh\|_w\le\sum_{k=0}^{\infty}\frac{\|S_w^k h\|_w}{2^k\|S^k_w \|}\le  2\,.
 $$
 Putting this into the previous inequality we obtain $w(\Omega)^{1/2}\le  2 \phi(c[w]_{A_2})\|f\|_{w} \,.$ Theorem \ref{weakpv} is proved.
 
 \section{Discussion}
 \label{discu}
 
 All the results from above can be proved for \cz operators on homogeneous metric spaces.

\end{document}